\numberwithin{equation}{section}
\theoremstyle{plain}
\newtheorem{theorem}{\sc Theorem}[section]
\newtheorem{condition}[theorem]{\sc Condition}
\newtheorem{definition}[theorem]{\sc Definition}
\newtheorem{lemma}[theorem]{\sc Lemma}
\newtheorem{proposition}[theorem]{\sc Proposition}
\theoremstyle{remark}
\newtheorem{remark}[theorem]{\sc Remark}
\newtheorem{example}[theorem]{\sc Example}
\newcommand{\one}{{{\rm 1\mkern-1.5mu}\!{\rm I}}}
\newcommand{\Ham}{\overline H}
\newcommand{\Hamg}{\overline H}
\newcommand{\kes}{\text{frac}}
\newcommand{\Con}{\operatorname{C}}
\newcommand{\Cone}{\operatorname{C^1}}
\newcommand{\UC}{\operatorname{UC}}
\newcommand{\BUC}{\operatorname{BUC}}
\newcommand{\Lip}{\operatorname{Lip}}
\begin{document}

\title[Stochastic homogenization and effective Hamiltonians: The double-well case]{Stochastic homogenization and effective Hamiltonians of\\ HJ equations in one space dimension: The double-well case}

\author[A.\ Yilmaz]{Atilla Yilmaz}
\address{Atilla Yilmaz, Department of Mathematics, Temple University, 1805 N.\ Broad Street, Philadelphia, PA 19122, USA}
\email{atilla.yilmaz@temple.edu}
\urladdr{http://math.temple.edu/$\sim$atilla/}

\date{\today}

\subjclass[2010]{35B27, 35F21, 60G10.} 
\keywords{Stochastic homogenization; Hamilton-Jacobi equation; nonconvex Hamiltonian; effective Hamiltonian; quasiconvexification; viscosity solution; corrector.}

\begin{abstract}
	
	We consider Hamilton-Jacobi equations in one space dimension with Hamiltonians of the form $H(p,x,\omega) = G(p) + \beta V(x,\omega)$, where $V(\cdot,\omega)$ is a stationary \& ergodic potential of unit amplitude.
	The homogenization of such equations is established in a 2016 paper of Armstrong, Tran and Yu for all continuous and coercive $G$.
	Under the extra condition that $G$ is a double-well function (i.e., it has precisely two local minima), we give a new and fully constructive proof of homogenization which yields a formula for the effective Hamiltonian $\overline H$.
	We use this formula to provide a complete list of the heights at which the graph of $\overline H$ has a flat piece.
	We illustrate our results by analyzing basic classes of examples, highlight some corollaries that clarify the dependence of $\Ham$ on $G$, $\beta$ and the law of $V(\cdot,\omega)$, and discuss a generalization to even-symmetric triple-well Hamiltonians.
	
\end{abstract}

\maketitle

\section{Introduction}\label{sec:intro}

Let $(\Omega,\mathcal{F},\mathbb{P})$ be a probability space equipped with a group of measure-preserving transformations $\tau_x:\Omega\to\Omega$, $x\in\mathbb{R}$, such that $\tau_0 = \text{id}$ and $\tau_x\circ \tau_y = \tau_{x+y}$ for every $x,y\in\mathbb{R}$. Assume that $\mathbb{P}$ is ergodic under this group, i.e.,
\[ \mathbb{P}(\cap_{x\in\mathbb{R}}\tau_xA) \in\{0,1\}\quad\text{for every}\ A\in\mathcal{F}. \]
We will use $\mathbb{E}$ to denote expectation with respect to $\mathbb{P}$.

Consider a Hamilton-Jacobi (HJ) equation of the form
\begin{equation}\label{eq:originalHJ}
\partial_tu^\epsilon(t,x,\omega) + G(\partial_xu^\epsilon(t,x,\omega)) + \beta V(x/\epsilon,\omega) = 0,\quad(t,x)\in(0,+\infty)\times\mathbb{R},
\end{equation}
with the following ingredients: (i) $\beta,\epsilon>0$; (ii) $\omega\in\Omega$;
(iii) the function $G:\mathbb{R}\to[0,+\infty)$ satisfies
\begin{equation}\label{eq:coercive}
	G\in\Con(\mathbb{R}) \quad \text{and} \quad \lim_{p\to \pm\infty}G(p) = +\infty,
\end{equation}
i.e., it is continuous and coercive on $\mathbb{R}$;
(iv) the so-called potential $V:\mathbb{R}\times\Omega\to\mathbb{R}$ is nonconstant and measurable,
\begin{equation}\label{eq:stationary}
	V(x,\omega) = V(0,\tau_x\omega)\quad\text{for every}\ (x,\omega)\in\mathbb{R}\times\Omega,
\end{equation}
and the map
\begin{equation}\label{eq:bucon}
	\text{$x\mapsto V(x,\omega)$ is in $\BUC(\mathbb{R})$ for every $\omega\in\Omega$,}
\end{equation}
i.e., it is bounded and uniformly continuous on $\mathbb{R}$.

The parameter $\beta$ adjusts the amplitude of the potential. Since adding a constant to $V$ corresponds to adding a linear (in time) term to any solution of \eqref{eq:originalHJ}, we will assume without further loss of generality that
\begin{equation}\label{eq:evren}
\mathbb{P}\left( \inf\{V(x,\omega):\,x\in\mathbb{R}\} = 0 < 1 = \sup\{V(x,\omega):\,x\in\mathbb{R}\} \right) = 1.
\end{equation}

The parameter $\epsilon$ adjusts the length scale of the potential. As $\epsilon\to 0$, the HJ equation in \eqref{eq:originalHJ} homogenizes to a deterministic HJ equation of the form
\begin{equation}\label{eq:effectiveHJ}
\partial_t\overline u(t,x) + \Ham(\partial_x\overline u(t,x)) = 0,\quad(t,x)\in(0,+\infty)\times\mathbb{R},
\end{equation}
with a coercive $\Ham\in\Con(\mathbb{R})$, referred to as the effective Hamiltonian.
Precisely, for every $g\in\UC(\mathbb{R})$, the space of uniformly continuous functions on $\mathbb{R}$, let $u_g^\epsilon(\cdot,\cdot,\omega)$ and $\overline u_g$ be the unique viscosity solutions of \eqref{eq:originalHJ} and \eqref{eq:effectiveHJ} that satisfy $u_g^\epsilon(0,\cdot,\omega) = \overline u_g(0,\cdot) = g$. 
With this notation, there exists an $\Omega_0\in\mathcal{F}$ with $\mathbb{P}(\Omega_0) = 1$ such that, for every $\omega\in\Omega_0$ and $g\in\UC(\mathbb{R})$, $u_g^\epsilon(\cdot,\cdot,\omega)$ converges locally uniformly on $[0,+\infty)\times\mathbb{R}$ as $\epsilon\to0$ to $\overline u_g$. This homogenization result (without any assumptions beyond \eqref{eq:coercive}--\eqref{eq:evren}) is due to Armstrong, Tran and Yu \cite{ATY16}.

In the course of their proof in \cite{ATY16}, Armstrong, Tran and Yu establish several key properties of the effective Hamiltonian. Most notably:
\begin{itemize}
	\item $\Ham$ is level-set convex (also called quasiconvex) if and only if the parameter $\beta$ is greater than or equal to an explicit threshold (which depends on the local extreme values of $G$);
	\item the graph $\{ (\theta,\Ham(\theta)):\,\theta\in\mathbb{R} \}$ has some flat pieces.
\end{itemize}
Their approach relies on showing that, for every $\theta$ outside the flat pieces, there exists a corrector that is sublinear in $x$ as $x\to\pm\infty$. 

The main goal of this paper is to provide a more detailed picture of the effective Hamiltonian. For the sake of clarity and convenience, we choose to restrict our attention to the case where $G$ has precisely two local minima, i.e., it is a double-well function. In this special but representative case, we give a new proof of the homogenization result in \cite{ATY16}. Moreover,
\begin{itemize}
	\item we derive a formula for $\Ham$ on the whole real line that is easy to analyze;
	\item we provide a complete list of the values of $\Ham$ where its graph has a flat piece (which depends on the parameter $\beta$, the local extreme values of $G$ and whether certain explicit events involving the oscillations of $V(\cdot,\omega)$ are null sets under $\mathbb{P}$).
\end{itemize}
The distinguishing feature of our approach is that, for every $\theta$ outside the flat pieces, we construct a sublinear corrector ``by hand", i.e., without using any existence results or limit procedures.

\bigskip

Here is an outline of the rest of the paper.

\vspace{-1mm}
	
\subsubsection*{Section \ref{sec:prevresults}} We start by surveying the literature on the homogenization of HJ equations in the periodic and the stationary \& ergodic settings with convex, level-set convex and nonconvex Hamiltonians. We say a few words about some of the existing proof strategies and introduce (sublinear) correctors in a nontechnical way. After that, we focus on the aforementioned work of Armstrong, Tran and Yu \cite{ATY16}, outline their proof, and present their findings regarding the effective Hamiltonian in order to put our results into context.

\vspace{-1mm}
	
\subsubsection*{Section \ref{sec:ourresults}} For a general class of double-well functions $G$ with local extreme values $0\le m < M$,
we state our homogenization result and identify three qualitatively distinct cases depending on $\beta$, $m$ and $M$ (see Theorem \ref{thm:velinim}).
In each of these cases, we provide a piecewise formula for the effective Hamiltonian $\Ham$. Then, we present our results that determine the set $\mathcal{L}(\Ham)$ of heights at which the graph of $\Ham$ has a flat piece (see Theorems \ref{thm:ergthmapp} and \ref{thm:intermed}). Finally, we discuss some features, corollaries and generalizations of our results, in particular to even-symmetric triple-well Hamiltonians.
	
\vspace{-1mm}

\subsubsection*{Section \ref{sec:prelim}} After citing fundamental theorems on the existence \& uniqueness of viscosity solutions and on reducing homogenization to the almost sure existence of a pointwise limit, we present elementary comparison-based results on how this pointwise limit follows from the existence of correctors (see Proposition \ref{prop:demtut}) or certain one-sided variants of them (see Proposition \ref{prop:onesided}). All statements are customized to our setting and purposes.

\vspace{-1mm}

\subsubsection*{Sections \ref{sec:weak}--\ref{sec:summary}} In the first three of these sections, we consider the three qualitatively distinct cases in the statement of Theorem \ref{thm:velinim}. 
In each case, to obtain the nonflat pieces of the graph of $\Ham$, we construct correctors and use Proposition \ref{prop:demtut}. Then, we carefully concatenate some of these correctors and use Proposition \ref{prop:onesided} to obtain the flat pieces. For the convenience of the reader, each piece has a designated subsection. The most important one is Subsection \ref{sub:ister} where we construct and work with correctors that are merely piecewise continuously differentiable. Finally, in Section \ref{sec:summary}, we put everything together and conclude the proof of Theorem \ref{thm:velinim}.

\vspace{-1mm}

\subsubsection*{Section \ref{sec:newflat}--\ref{sec:interflat}}

The piecewise formula in Theorem \ref{thm:velinim} identifies all intervals on which $\Ham$ is constant. However, not all of these intervals necessarily have positive length. In the proof of Theorem \ref{thm:ergthmapp}, we show that the strict inequality between the two endpoints of some of these intervals is characterized by the potential almost surely attaining its supremum or infimum. In the proof of Theorem \ref{thm:intermed}, in order to characterize the same strict inequality for the remaining intervals in question, we introduce two natural events regarding the upcrossings and the downcrossings of the potential. Both proofs involve elementary probability bounds and ultimately rely on the ergodic theorem. When combined, they determine the aforementioned set $\mathcal{L}(\Ham)$ of heights at which the graph of $\Ham$ has a flat piece.

\vspace{-1mm}

\subsubsection*{Section \ref{sec:examples}}

We illustrate our results by writing down the set $\mathcal{L}(\Ham)$ for two basic classes of examples. In the first class, the potential is the periodic extension of a single-well or double-well function $v_0$. We demonstrate how $\beta,m,M$ and the local extreme values of $v_0$ precisely interact to yield $\mathcal{L}(\Ham)$. In the second class, the potential is constructed by linearly interpolating a stationary \& ergodic process with index set $\mathbb{Z}$ and law $\mathbb{Q}$. Under additional structural assumptions, we show that $\mathcal{L}(\Ham)$ is determined by $\beta,m,M$ and the atoms of one-dimensional marginals or conditionals of $\mathbb{Q}$.

\section{Previous results}\label{sec:prevresults}

\subsection{Homogenization of HJ equations: General}\label{sub:prevgenel}

In this subsection, we review the literature on the homogenization of HJ equations that is directly relevant to our results (which we will present in Section \ref{sec:ourresults}). For this purpose, we temporarily generalize the setting in Section \ref{sec:intro} (which we will promptly return to in Subsection \ref{sub:ATY}).

Fix any $d\ge1$. Let $(\Omega,\mathcal{F},\mathbb{P})$ be a probability space equipped with a group of measure-preserving transformations $\tau_x:\Omega\to\Omega$, $x\in\mathbb{R}^d$, such that $\tau_0 = \text{id}$ and $\tau_x\circ \tau_y = \tau_{x+y}$ for every $x,y\in\mathbb{R}^d$. Assume that $\mathbb{P}$ is ergodic under this group. For every $\epsilon > 0$ and $\omega\in\Omega$, consider the HJ equation
\begin{equation}\label{eq:litHJ}
\partial_tu^\epsilon(t,x,\omega) + H(\nabla_xu^\epsilon(t,x,\omega),x/\epsilon,\omega) = 0,\quad(t,x)\in(0,+\infty)\times\mathbb{R}^d,
\end{equation}
where the function $H:\mathbb{R}^d\times\mathbb{R}^d\times\Omega\to[0,+\infty)$ is measurable and it satisfies the following properties:
\begin{align}
		  (p,x) \mapsto &H(p,x,\omega)\ \text{is in}\, \BUC(K\times\mathbb{R}^d)\ \text{for every bounded}\ K\subset\mathbb{R}^d\ \text{and}\ \omega\in\Omega,\label{eq:toplukonut1}\\
  \lim_{|p|\to +\infty} &H(p,x,\omega) = +\infty\ \ \text{uniformly in}\ (x,\omega)\in\mathbb{R}^d\times\Omega,\ \text{and}\label{eq:toplukonut2}\\
						&H(p,x,\omega) = H(p,0,\tau_x\omega)\ \text{for every}\ (p,x,\omega) \in \mathbb{R}^d\times\mathbb{R}^d\times\Omega\label{eq:toplukonut3}.
\end{align}

The first homogenization result for the HJ equation in \eqref{eq:litHJ} is due to Lions, Papanicolaou and Varadhan \cite{LPV87}. They focus on the special case where
$x = (x_1,\ldots,x_d) \mapsto H(p,x,\omega)$ is 1-periodic in $x_i$ for every $i\in\{1,\ldots,d\}$ and use the compactness of $[0,1]^d$ to solve the following auxiliary problem (which is formulated by them and referred to as the cell problem): for every $\theta\in\mathbb{R}^d$, find a $\lambda = \lambda(\theta)\in\mathbb{R}$ and a periodic function $h = h_\theta:\mathbb{R}^d\times\Omega\to\mathbb{R}$ such that
\begin{equation}\label{eq:cellprob}
	H(\theta + \nabla_x h(x,\omega),x,\omega) = \lambda,\quad x\in\mathbb{R}^d,
\end{equation}
in the viscosity sense. The solutions of this cell problem are called correctors. Their existence readily implies that the HJ equation in \eqref{eq:litHJ} homogenizes to
\[ \partial_t\overline u(t,x) + \Ham(\nabla_x\overline u(t,x)) = 0,\quad(t,x)\in(0,+\infty)\times\mathbb{R}^d, \]
with $\Hamg(\theta) = \lambda(\theta)$ for every $\theta\in\mathbb{R}$, albeit when a priori restricted to affine initial conditions. Then, Lions, Papanicolaou and Varadhan extend the class of initial conditions to all uniformly continuous functions by identifying and using some key properties of the semigroup induced by \eqref{eq:litHJ}.

In the stationary \& ergodic setting, on top of \eqref{eq:toplukonut1}--\eqref{eq:toplukonut3} and some additional mild conditions, if one assumes that $p \mapsto H(p,x,\omega)$ is convex, then the HJ equation in \eqref{eq:litHJ} homogenizes. This was shown by Souganidis \cite{S99} (in the $\mathbb{P}$-a.s.\ sense that we described in Section \ref{sec:intro}) and independently by Rezakhanlou and Tarver \cite{RT00} (in an $L^1(\mathbb{P})$ sense). In both papers, the main idea is to apply the subadditive ergodic theorem to a variational representation for the viscosity solutions of \eqref{eq:litHJ} that involves the Legendre transform of $p \mapsto H(p,x,\omega)$.

In their paper \cite{RT00} mentioned above, Rezakhanlou and Tarver also describe how one can try to adapt the approach of Lions, Papanicolaou and Varadhan \cite{LPV87} to the stationary \& ergodic setting. In this direction, following the work of Evans \cite{E92} on periodic Hamiltonians, they show that the desired homogenization result for \eqref{eq:litHJ} would be established if one could solve the following generalization of the cell problem: for every $\theta\in\mathbb{R}^d$, find a $\lambda = \lambda(\theta)\in\mathbb{R}$ and a Lipschitz continuous viscosity solution $h(\cdot,\omega) = h_\theta(\cdot,\omega):\mathbb{R}^d\to\mathbb{R}$ of \eqref{eq:cellprob} that satisfies
\begin{equation}\label{eq:esref}
	\lim_{|x|\to+\infty}\frac{h(x,\omega)}{|x|} = 0
\end{equation}
for $\mathbb{P}$-a.e.\ $\omega$. However, in a later paper \cite{LS03}, Lions and Souganidis demonstrate (by providing a convex counterexample in one space dimension) that such a sublinear corrector does not exist in general (at least if it is required to be bounded). Furthermore, whenever homogenization holds, they give a variational formula for the effective Hamiltonian.

The convexity assumption can be somewhat relaxed by assuming level-set convexity instead, i.e., for every $a\in[0,1]$, $p,q,x\in\mathbb{R}^d$ and $\omega\in\Omega$,  
\[ H(ap + (1-a)q,x,\omega) \le \max\{H(p,x,\omega),H(q,x,\omega) \}. \]
In this case, when $d=1$, Davini and Siconolfi \cite{DS09} prove homogenization by solving the following relaxation of the generalized cell problem: for every $\theta\in\mathbb{R}$ and $\delta>0$, find a $\lambda = \lambda(\theta)\in\mathbb{R}$ and a Lipschitz continuous function $h(\cdot,\omega) = h_{\theta,\delta}(\cdot,\omega):\mathbb{R}\to\mathbb{R}$ such that
\[ \lambda - \delta \le H(\theta + h'(x,\omega),x,\omega) \le \lambda + \delta,\quad x\in\mathbb{R}, \]
in the viscosity sense and \eqref{eq:esref} holds for $\mathbb{P}$-a.e.\ $\omega$. Such functions are called $\delta$-approximate correctors. When $d\ge1$, Armstrong and Souganidis \cite{AS13} bypass the existence of $\delta$-approximate correctors and instead prove homogenization by applying the subadditive ergodic theorem to the maximal solutions of \eqref{eq:cellprob} when the condition $x\in\mathbb{R}^d$ there is replaced by $x\in\mathbb{R}^d\setminus\{y\}$ for any $y\in\mathbb{R}^d$. Since these maximal solutions give an intrinsic distance between $x$ and $y$, they are referred to as the solutions of the metric problem.

The first homogenization result for a genuinely nonconvex example in the stationary \& ergodic setting is due to Armstrong, Tran and Yu \cite{ATY15} who consider separable Hamiltonians which are of the form
\[ H(p,x,\omega) = G(p) + V(x,\omega). \]
They assume the following: (i) $G(p) = (|p|^2 - 1)^2$; (ii) $x\mapsto V(x,\omega)$ is in $\BUC(\mathbb{R}^d)$. Their special choice of $G$ (most notably its radial symmetry) allows them to construct a family of static HJ equations that involves a free parameter $\sigma\in[-1,1]$ and generalizes \eqref{eq:cellprob} (in the sense that the latter is formally obtained when $\sigma=\pm 1$). They establish the desired homogenization result by applying the subadditive ergodic theorem to the maximal subsolutions of these equations and using comparison arguments. 

When $d=1$, the function $G(p) = (|p|^2 - 1)^2$ considered above is an even-symmetric double-well function. In their subsequent paper \cite{ATY16} in one space dimension, Armstrong, Tran and Yu generalize this homogenization result to essentially all separable Hamiltonians with a coercive $G$. (This is the work we cited in Section \ref{sec:intro} and we will present it in Subsection \ref{sub:ATY} below.) Finally, Gao \cite{G16} removes the separability assumption in \cite{ATY16} and thereby proves that, when $d=1$, the HJ equation in \eqref{eq:litHJ} homogenizes under the assumptions \eqref{eq:toplukonut1}--\eqref{eq:toplukonut3}.

In a relatively recent paper, Qian, Tran and Yu \cite{QTY18} go back to the periodic setting and consider a class of separable Hamiltonians with a coercive and nonconvex $G\in\Con(\mathbb{R}^d)$ that is, in particular, even-symmetric (i.e., $G(p) = G(-p)$ for every $p\in\mathbb{R}^d$). They introduce a decomposition method that is tailored to this class and provide a min-max formula for the effective Hamiltonian $\Hamg$. Moreover, using analytical and numerical methods, they partially answer the following interesting questions regarding the dependence of $\Hamg$ on the potential $V$ (which we will revisit in Subsection \ref{sub:remarks}):
\begin{equation}\label{eq:questions}
	\begin{aligned}
		&\text{When is $\Hamg$ level-set convex (despite the fact that $G$ is not)?}\\
		&\text{When is $\Hamg$ not even-symmetric (despite the fact that $G$ is)?}
	\end{aligned} 
\end{equation}
Their approach generalizes to the stationary \& ergodic setting with the same class of Hamiltonians, yielding a simpler proof of homogenization that covers the aforementioned result of Armstrong, Tran and Yu \cite{ATY15} with $G(p) = (|p|^2 - 1)^2$.

For other (positive and negative) results on the homogenization of HJ equations with nonconvex Hamiltonians, see \cite{FeS17,G19,Z17}. 

We end this review by mentioning that there are closely related works on the homogenization of second-order HJ equations, starting with \cite{KRV06,LS05} under the assumption of convexity. However, the literature on nonconvex Hamiltonians is relatively sparse. In particular, when $d=1$, homogenization is established in \cite{DK17,DK20+,KYZ20,YZ19} for certain classes of nonconvex Hamiltonians, but the picture is far from being complete. See also \cite{AC18,CS17,FeFZ19+} for some (positive and negative) results in higher dimensions.

\subsection{Homogenization of HJ equations: Separable Hamiltonians in one space dimension}\label{sub:ATY}

In this subsection, we present the aforementioned work of Armstrong, Tran and Yu \cite{ATY16} in more detail. For this purpose, we return to the setting in Section \ref{sec:intro}. In particular, we assume that the Hamiltonian is separable, $d=1$, and \eqref{eq:coercive}--\eqref{eq:evren} hold.

By a stability argument and a gluing procedure (see \cite[Lemmas 2.3 and 4.1]{ATY16}), it suffices to prove the desired homogenization result for the HJ equation in \eqref{eq:originalHJ} under several additional assumptions, mainly (when translated to our setting):
	\begin{align*}
		&\text{$G$ has a unique absolute minimum at $0$ and $G(0) = 0$,}\\
		&\text{$G$ has $L$ local minima (and hence $L$ local maxima) in $(-\infty,0)$ for some $L\ge0$,}\\
		&\text{$G$ has no local minima in $(0,+\infty)$, and}\\
		&\text{$x\mapsto V(x,\omega)$ is a smooth function whose level sets have no cluster points.}
	\end{align*}

Whenever $L\ge1$, denote the local minimum and the local maximum values of $G$ in $(-\infty,0)$ by $m_1,\ldots,m_L$ and $M_1,\ldots,M_L$, respectively. Let
\[ m_{\min} = \min\{m_1,\ldots,m_L\}\qquad\text{and}\qquad M_{\max} = \max\{M_1,\ldots,M_L\}. \]
If $\beta < M_{\max} - m_{\min}$, then two further gluing procedures (see \cite[Lemmas 4.2 and 4.3]{ATY16}) imply that the desired result follows from that for Hamiltonians with strictly less number of local minima. This is a strong induction argument with two base cases.

\subsubsection*{Base case 1: $L=0$}

This case is covered by \cite{AS13,DS09}. The effective Hamiltonian $\Ham$ inherits the level-set convexity of $G$. Moreover, there exist $\theta_\ell,\theta_r\in\mathbb{R}$ such that $\theta_\ell < 0 < \theta_r$ and
\begin{equation}\label{eq:ATYdis1}
	\text{$\Ham$ is}\ \begin{cases} \text{strictly decreasing on}\ (-\infty,\theta_\ell],\\
								   \text{identically equal to $\beta$ on}\ (\theta_\ell,\theta_r),\\
								   \text{strictly increasing on}\ [\theta_r,+\infty).
					  \end{cases}
\end{equation}

\subsubsection*{Base case 2: $L\ge1$ and $\beta \ge  M_{\max} - m_{\min}$}

This case is treated in \cite[Section 3]{ATY16}. The effective Hamiltonian $\Ham$ is level-set convex despite the fact that $G$ is not. Moreover, there exist $\theta_\ell,\theta_r\in\mathbb{R}$ such that $\theta_\ell < 0 < \theta_r$ and
\begin{equation}\label{eq:ATYdis2}
	\text{$\Ham$ is}\ \begin{cases} \text{strictly decreasing on the semi-infinite interval}\ \{\theta\le\theta_\ell:\,\Ham(\theta) \ge M_{\max} + \beta\},\\
								   \text{nonincreasing on the nonempty interval}\ \{\theta\le\theta_\ell:\,m_{\min} < \Ham(\theta) < M_{\max} + \beta\},\\
								   \text{strictly decreasing on the possibly empty interval}\ \{\theta\le\theta_\ell:\,\Ham(\theta) \le m_{\min}\},\\
								   \text{identically equal to $\beta$ on}\ (\theta_\ell,\theta_r),\\
								   \text{strictly increasing on}\ [\theta_r,+\infty).
					  \end{cases}
\end{equation}

\smallskip

In each of these two base cases, for every $\theta\notin(\theta_\ell,\theta_r)$, there is a sublinear corrector. In particular, for every $\theta$ in the intervals on which $\Ham$ is stated to be strictly monotone, it is easy to directly write down a sublinear corrector and obtain a simple formula for $\Ham(\theta)$ (which we omit here because we will provide such formulas in Subsection \ref{sub:hom} after introducing some notation). In fact, the strict monotonicity of $\Ham$ on those intervals can be deduced from this formula. On the other hand, in the second base case, when $\theta\le\theta_\ell$ and $m_{\min} < \Ham(\theta) < M_{\max} + \beta$, the proof of the existence of a sublinear corrector is not constructive (see \cite[Lemma 3.5]{ATY16}) and it does not yield a formula for $\Ham(\theta)$.

\section{Our results}\label{sec:ourresults}

\subsection{Double-well Hamiltonians}\label{sub:assump}

Recall the setting we described in Section \ref{sec:intro}. On top of the basic assumptions we have adopted there (which we will implicitly accept in the rest of the paper), we will state and prove our results under the additional assumption that $G$ is a double-well function. Here is the precise formulation.

\begin{condition}\label{cond:pm}
	There exist $m,M,p_m,p_M\in\mathbb{R}$ such that
	\[ p_m < p_M < 0,\quad G(0) = 0 \le G(p_m) = m < G(p_M) = M, \]
	and $G$ is of the form
	\[ G(p) = \begin{cases}
	G_1(p)&\text{if}\ p\in(-\infty,p_m],\\
	G_2(p)&\text{if}\ p\in[p_m,p_M],\\
	G_3(p)&\text{if}\ p\in[p_M,0],\\
	G_4(p)&\text{if}\ p\in[0,+\infty),
	\end{cases} \]
	where $G_1$ and $G_3$ (resp.\ $G_2$ and $G_4$) are defined on the intervals indicated next to them and they are strictly decreasing (resp.\ strictly increasing). See Figure \ref{fig:G}.
\end{condition}

\begin{figure}
	\includegraphics{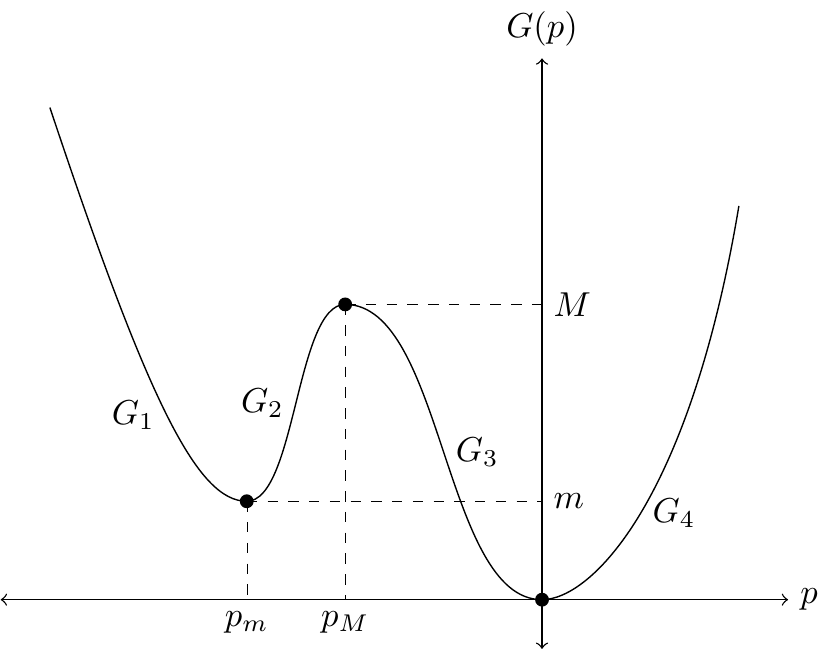}
	\caption{The graph of a function $G$ that satisfies Condition \ref{cond:pm} with $m > 0$.}
	\label{fig:G}
\end{figure}

Note that, if $G$ satisfies Condition \ref{cond:pm}, then it has exactly three extreme points: an absolute minimum at $0$, a local minimum at $p_m$ (which is an absolute minimum when $m=0$) and a local maximum at $p_M$. Conversely, if $G$ has exactly two local minima, then it satisfies Condition \ref{cond:pm} up to translation and reflection (if necessary) which would correspond to adding a linear (in $t$ and $x$) term to any viscosity solution of \eqref{eq:originalHJ} and substituting $-x$ for $x$ (see Subsection \ref{sub:remarks}).

\subsection{Homogenization}\label{sub:hom}

Define
\begin{align*}
&\theta_1: [m+\beta,+\infty) \to (-\infty,p_m), &&\theta_2: [m+\beta,M] \to (p_m,p_M),\\
&\theta_3: [\beta, M] \to (p_M,0),    &&\theta_4: [\beta,+\infty) \to (0,+\infty)
\end{align*}
by
\begin{equation}\label{eq:ddnono}
	\theta_i(\lambda) = \mathbb{E}\left[ G_i^{-1}(\lambda - \beta V(0,\omega)) \right],\quad i\in\{1,2,3,4\},
\end{equation}
whenever their domains are nonempty. It follows that $\theta_1$ and $\theta_3$ (resp.\ $\theta_2$ and $\theta_4$) are strictly decreasing (resp.\ strictly increasing) and continuous. With this notation, here is our first result.

\begin{theorem}\label{thm:velinim}
	Recall the (stationary \& ergodic) setting in Section \ref{sec:intro}. In particular, assume that \eqref{eq:coercive}--\eqref{eq:evren} hold. Moreover, impose Condition \ref{cond:pm} on $G$. Then, the HJ equation in \eqref{eq:originalHJ} homogenizes to the HJ equation in \eqref{eq:effectiveHJ} with a coercive $\Ham\in\Con(\mathbb{R})$. Precisely, there is an $\Omega_0\in\mathcal{F}$ with $\mathbb{P}(\Omega_0) = 1$ such that, for every $\omega\in\Omega_0$ and $g\in\UC(\mathbb{R})$, the unique viscosity solution $u_g^\epsilon(\cdot,\cdot,\omega)$ of \eqref{eq:originalHJ} with initial condition $g$ converges locally uniformly on $[0,+\infty)\times\mathbb{R}$ as $\epsilon\to0$ to the unique viscosity solution $\overline u_g$ of \eqref{eq:effectiveHJ} with the same initial condition.
	
	The effective Hamiltonian $\Ham$ has the following piecewise description.
\begin{description}
\item [Case I] If $\beta \le m + \beta < M$ (henceforth referred to as weak potential), then
\begin{equation}\label{eq:sakin1}
\Ham(\theta) =
\begin{cases}
\theta_1^{-1}(\theta)\ \text{on}\ (-\infty,\theta_1(m + \beta)]&\text{(strictly decreasing)},\\
m + \beta\ \text{on}\ (\theta_1(m+ \beta),\theta_2(m + \beta))&\text{(flat piece)},\\
\theta_2^{-1}(\theta)\ \text{on}\ [\theta_2(m + \beta),\theta_2(M)]&\text{(strictly increasing)},\\
M\ \text{on}\ (\theta_2(M),\theta_3(M))&\text{(flat piece)},\\
\theta_3^{-1}(\theta)\ \text{on}\ [\theta_3(M),\theta_3(\beta)]&\text{(strictly decreasing)},\\
\beta\ \text{on}\ (\theta_3(\beta),\theta_4(\beta))&\text{(flat piece)},\\
\theta_4^{-1}(\theta)\ \text{on}\ [\theta_4(\beta),+\infty)&\text{(strictly increasing)}.
\end{cases}
\end{equation} 
In particular, $\Ham$ is not level-set convex.

\item [Case II] If $\beta < M \le m + \beta$ (henceforth referred to as medium potential), then
\begin{equation}\label{eq:sakin2}
\Ham(\theta) =
\begin{cases}
\theta_1^{-1}(\theta)\ \text{on}\ (-\infty,\theta_1(m + \beta)]&\text{(strictly decreasing)},\\
\overline\Lambda(\theta)\ \text{on}\ (\theta_1(m + \beta),\theta_3(M))&\text{(nonincreasing)},\\
\theta_3^{-1}(\theta)\ \text{on}\ [\theta_3(M),\theta_3(\beta)]&\text{(strictly decreasing)},\\
\beta\ \text{on}\ (\theta_3(\beta),\theta_4(\beta))&\text{(flat piece)},\\
\theta_4^{-1}(\theta)\ \text{on}\ [\theta_4(\beta),+\infty)&\text{(strictly increasing)}.
\end{cases}
\end{equation}
The nonincreasing function $\overline\Lambda:(\theta_1(m + \beta),\theta_3(M))\to[M,m + \beta]$ is given in Definition \ref{def:lamb}. (See Remark \ref{rem:nov} below for a description.)
In particular, $\Ham$ is level-set convex.

\item [Case III] If $M \le \beta \le m + \beta$ (henceforth referred to as strong potential), then
\begin{equation}\label{eq:sakin3}
\Ham(\theta) =
\begin{cases}
\theta_1^{-1}(\theta)\ \text{on}\ (-\infty,\theta_1(m + \beta)]&\text{(strictly decreasing)},\\
\overline\Lambda(\theta)\ \text{on}\ (\theta_1(m + \beta),\theta_4(\beta))&\text{(nonincreasing)},\\
\theta_4^{-1}(\theta)\ \text{on}\ [\theta_4(\beta),+\infty)&\text{(strictly increasing)}.
\end{cases}
\end{equation}
The nonincreasing function $\overline\Lambda:(\theta_1(m + \beta),\theta_4(\beta))\to[\beta,m + \beta]$ is given in Definition \ref{def:lamb}. (See Remark \ref{rem:nov} below for a description.)
In particular, $\Ham$ is level-set convex.
\end{description}
\end{theorem}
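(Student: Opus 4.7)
The plan is to reduce the homogenization claim via Proposition \ref{prop:demtut} (and its one-sided variant Proposition \ref{prop:onesided}) to constructing, for each $\theta\in\mathbb{R}$, a sublinear corrector at some level $\lambda$, which will then have to be $\overline H(\theta)$. The guiding ansatz is
\begin{equation*}
h_\theta(x,\omega) \;=\; \int_0^x \bigl( q_\theta(y,\omega) - \theta \bigr)\,dy,
\end{equation*}
where at each $y$ the slope $q_\theta(y,\omega)$ is chosen from among the solutions of $G(p) = \lambda - \beta V(y,\omega)$. Under Condition \ref{cond:pm} these solutions come in up to four branches $G_i^{-1}(\lambda - \beta V(y,\omega))$, $i\in\{1,2,3,4\}$, and spatial averages of such branch-valued slopes converge, by the ergodic theorem, to the explicit expectations $\theta_i(\lambda)$ from \eqref{eq:ddnono}. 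This is the sole source of the sublinearity in $x$ that Propositions \ref{prop:demtut} and \ref{prop:onesided} require.

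First, on each of the four strictly monotone pieces in \eqref{eq:sakin1}--\eqref{eq:sakin3}, I would pick the unique branch index $i$ and the unique $\lambda$ in the domain of $\theta_i$ with $\theta_i(\lambda) = \theta$, and take $q_\theta(y,\omega) = G_i^{-1}(\lambda - \beta V(y,\omega))$. The assumption \eqref{eq:evren} that $V(\cdot,\omega)\in[0,1]$ guarantees that $\lambda - \beta V(y,\omega)$ stays in the range of $G_i$, so this inverse is well defined, the resulting $h_\theta$ is globally $C^1$, and it satisfies the corrector equation $G(\theta + h_\theta') + \beta V = \lambda$ in the classical sense. Proposition \ref{prop:demtut} then gives $\overline H(\theta) = \theta_i^{-1}(\theta)$.

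Second, for the flat pieces at heights $\beta$, $m + \beta$, and $M$ appearing in the three cases, I would concatenate two branches by switching at the spatial points where the two candidate slopes coincide, namely at $p=0$, $p_m$, or $p_M$. The result is only piecewise $C^1$, so the viscosity subsolution and supersolution inequalities at the kinks must be checked separately; this is the content flagged as Subsection \ref{sub:ister}. Adjusting the proportion in which each branch is used over a period, via appropriate thresholds on $V(\cdot,\omega)$, and passing to the ergodic limit, produces correctors whose mean slope sweeps out every intermediate value between the two endpoints of the flat interval, giving $\overline H \equiv$ (the common value of $\lambda$) on that whole interval.

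The hard part will be Cases II and III, where a nonflat but nonincreasing piece is described by the function $\overline\Lambda$ of Definition \ref{def:lamb}, which is not of the form $\theta_i^{-1}$. Here a genuine two-sided corrector need not exist, so I expect to build separate sub- and supersolution correctors: piecewise-$C^1$ functions whose slopes at $y$ are chosen from $G_1^{-1}$ together with either $G_2^{-1}$ or $G_3^{-1}$, with the branch selected according to whether $V(y,\omega)$ exceeds the thresholds $(\lambda - M)/\beta$ and $(\lambda - m)/\beta$ that determine the admissibility of the inverses. Invoking Proposition \ref{prop:onesided} and sandwiching the mean slope via the ergodic theorem will then identify $\overline H(\theta) = \overline\Lambda(\theta)$ on the nonincreasing piece. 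Finally, all of the above branch-by-branch constructions live on a common full-measure set $\Omega_0$ (by countable intersection over a dense set of $\theta$ and continuity of $\overline H$), and the stated piecewise formula and $\mathbb{P}$-a.s.\ convergence of $u_g^\epsilon$ to $\overline u_g$ follow.
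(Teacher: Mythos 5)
Your treatment of the four strictly monotone pieces coincides with the paper's (single-branch classical correctors plus Proposition \ref{prop:demtut}), but the flat-piece recipe has a genuine gap. You propose to switch branches ``at the spatial points where the two candidate slopes coincide, namely at $p=0$, $p_m$, or $p_M$''; such a coincidence forces $\lambda-\beta V(y,\omega)$ to equal $0$, $m$ or $M$ exactly, and for the flat heights $\beta$, $m+\beta$, $M$ this means $V(y,\omega)=1$ or $V(y,\omega)=0$. Assumption \eqref{eq:evren} only fixes the essential supremum and infimum of $V(\cdot,\omega)$; these values need not be attained (whether they are attained is precisely the dichotomy of Theorem \ref{thm:ergthmapp}), so your switching points may not exist. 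Worse, the claim that tuning the proportion of the two branches yields correctors whose mean slope sweeps the whole flat interval is false in general: at level $\beta$, every upward slope switch is a convex kink whose subdifferential contains $0$, so the supersolution inequality forces $\beta V=\beta$ at the switch point; if $V$ a.s.\ does not attain its supremum, no exact corrector with intermediate mean slope exists --- which is exactly why the graph is flat there. The paper instead switches once, at a point where $\beta V=\beta-\epsilon$ (resp.\ $=\epsilon$), obtaining a subsolution at level $\lambda$ and a supersolution at level $\lambda-\epsilon$ with asymptotic slopes bracketing the interval, and concludes via Proposition \ref{prop:onesided} and $\epsilon\to0$; no corrector with prescribed intermediate slope is ever needed.

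For Cases II and III your sketch also misses the paper's key device and does not actually identify $\overline\Lambda$. Contrary to your expectation, the paper does build genuine two-sided correctors on the nonincreasing piece ($\underline f_{1,3}^\lambda$ and $\overline f_{1,3}^\lambda$, mixing the branches $G_1^{-1}$ and $G_3^{-1}$, not $G_2^{-1}$), but the branch selection cannot be a pointwise threshold rule in $V(y,\omega)$: it must have hysteresis, switching $G_3^{-1}\to G_1^{-1}$ only where $\lambda-\beta V=M$ and back only where $\lambda-\beta V=m$, which is what the coupled crossing recursions \eqref{eq:dikdur1}--\eqref{eq:dikdur2} encode. A single-threshold rule produces kinks at which one viscosity inequality fails (e.g.\ an upward switch where $\lambda-\beta V=M$ has $p_m$ in the subdifferential and $G(p_m)=m<M$), so it yields only a subsolution or only a supersolution; and since any such stationary one-sided object has the \emph{same} asymptotic slope at $\pm\infty$, Proposition \ref{prop:onesided} (which needs $\theta_-\ne\theta_+$) cannot be applied as you describe, while Proposition \ref{prop:demtut} needs an exact solution --- so your ``sandwich'' is not set up. Finally, even granting the two exact correctors, identifying $\overline H=\overline\Lambda$ requires the monotonicity, ordering and one-sided continuity of $\lambda\mapsto\underline\theta_{1,3}(\lambda),\overline\theta_{1,3}(\lambda)$ (Lemmas \ref{lem:basic}--\ref{lem:carmsop}) so that the intervals $[\underline\theta_{1,3}(\lambda),\overline\theta_{1,3}(\lambda)]$ partition the range and define $\Lambda$, concatenations of $\underline f_{1,3}^\lambda$ with $\overline f_{1,3}^\lambda$ plus Proposition \ref{prop:onesided} for $\theta$ strictly inside these intervals, and the further mixed concatenations at levels $\max\{\beta,M\}+\epsilon$ and $m+\beta-\epsilon$ (as in Subsections \ref{sub:apis} and \ref{sub:eflat}) to attach the possible flat pieces at the ends. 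None of these steps appears in your proposal.
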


\begin{remark}\label{rem:nov}
Theorem \ref{thm:velinim} is mostly covered by the homogenization result of Armstrong, Tran and Yu in \cite{ATY16}. Specifically, it corresponds to taking $L=1$ (so that $m_{\min} = m$ and $M_{\max} = M$) with the notation in Subsection \ref{sub:ATY} (assuming $m>0$). Therefore, Case I (where $\beta < M - m$) reduces to the first base case of their induction argument and $\Ham$ is obtained by gluing effective Hamiltonians of the form in \eqref{eq:ATYdis1}. (See \cite[Section 5]{ATY16}.) Similarly, Cases II and III (where $\beta \ge M - m$) are covered by the second base case of their induction argument and $\Ham$ satisfies \eqref{eq:ATYdis2}. Moreover, in all cases, the continuous functions $\theta_1,\theta_2,\theta_3,\theta_4$ (defined in \eqref{eq:ddnono}) whose inverses give the effective Hamiltonian on the intervals where it is stated to be strictly monotone are already provided in \cite{ATY16}. As we will see in Section \ref{sec:weak}, these four functions are associated to correctors in $\Cone(\mathbb{R})$.

The novelty of Theorem \ref{thm:velinim} lies in the identification of the function $\overline\Lambda$ in Cases II and III, i.e., when $\max\{\beta,M\} \le m + \beta$.
	\begin{itemize}
		\item If $\max\{\beta,M\} = m + \beta$, then $\overline\Lambda$ is identically equal to $\max\{\beta,M\}$ (see Definition \ref{def:lamb}(a,c)).
		\item If $\max\{\beta,M\} < m + \beta$, then $\overline\Lambda$ is the unique nonincreasing generalized inverse (see \eqref{eq:nowaste}) of a strictly decreasing and right-continuous function $\underline\theta_{1,3}$ (defined in \eqref{eq:hurma1}), possibly extended by adding flat pieces at the ends of its domain (see Definition \ref{def:lamb}(b,d)).
		The function $\underline\theta_{1,3}$ is associated to correctors that are piecewise continuously differentiable. Since the construction of those correctors involves several steps and more notation (see Subsection \ref{sub:ister}), the formal definition of $\overline\Lambda$ is postponed to Section \ref{sec:summary} where the proof of Theorem \ref{thm:velinim} is completed.
	\end{itemize}
\end{remark}

\subsection{Flat pieces within the nonincreasing piece}

Theorem \ref{thm:velinim} identifies intervals on which the effective Hamiltonian is either constant or strictly monotone, but it does not provide a complete list of such intervals. Precisely,
in the case of medium potential (i.e., when $\beta < M \le m + \beta$), on the interval $[\theta_1(m + \beta),\theta_3(M)]$, it merely states that $\Ham$ is nonincreasing,
\[ \Ham(\theta_1(m + \beta)) = m + \beta \quad \text{and} \quad \Ham(\theta_3(M)) = M. \]
Similarly, in the case of strong potential (i.e., when $M \le \beta \le m + \beta$), on the interval $[\theta_1(m + \beta),\theta_4(\beta)]$, it merely states that $\Ham$ is nonincreasing,
\[ \Ham(\theta_1(m + \beta)) = m + \beta \quad \text{and} \quad \Ham(\theta_4(\beta)) = \beta. \]
The following result characterizes the existence of flat pieces within these nonincreasing pieces that share one or two endpoints with them (i.e., at heights $\max\{\beta,M\}$ and $m + \beta$). It involves the events
\[ \{\omega\in\Omega: V(x,\omega) = 0\ \text{for some}\ x\in\mathbb{R}\}\quad\text{and}\quad\{\omega\in\Omega: V(x,\omega) = 1\ \text{for some}\ x\in\mathbb{R}\} \]
whose $\mathbb{P}$-probabilities are in $\{0,1\}$ by our ergodicity assumption. 

\begin{theorem}\label{thm:ergthmapp}
	Under the assumptions in Theorem \ref{thm:velinim}, the following are true.
	\begin{itemize}
		\item [(a)] If $\beta < M = m + \beta$ (henceforth referred to as the easy subcase of medium potential), then $\Ham$ is identically equal to $\max\{\beta,M\} = M$ on the whole interval $[\theta_1(M),\theta_3(M)]$.
		\item [(b)] If $\beta < M < m + \beta$ (i.e., medium potential, excluding the easy subcase above), then:
		\begin{itemize}
			\item [(i)] the graph of $\Ham$ has a flat piece at height $\max\{\beta,M\} = M$ iff
			\begin{equation}\label{eq:kimbegen}
				\mathbb{P}\left(V(x,\omega) = 0\ \text{for some}\ x\in\mathbb{R}\right) = 1;\ \text{and}
			\end{equation}
			\item [(ii)] the graph of $\Ham$ has a flat piece at height $m + \beta$ iff
			\begin{equation}\label{eq:ezgiha}
				\mathbb{P}\left(V(x,\omega) = 1\ \text{for some}\ x\in\mathbb{R}\right) = 1.
			\end{equation}
		\end{itemize}
		\item [(c)] If $M \le \beta = m + \beta$ (henceforth referred to as the easy subcase of strong potential), then $\Ham$ is identically equal to $\max\{\beta,M\} = \beta$ on the whole interval $[\theta_1(\beta),\theta_4(\beta)]$.
		\item [(d)] If $M \le \beta < m + \beta$ (i.e., strong potential, excluding the easy subcase above), then:
		\begin{itemize}
			\item [(i)] the graph of $\Ham$ always has a flat piece at height $\max\{\beta,M\} = \beta$; and
			\item [(ii)] the graph of $\Ham$ has a flat piece at height $m + \beta$ iff \eqref{eq:ezgiha} holds.
		\end{itemize}
	\end{itemize}
\end{theorem}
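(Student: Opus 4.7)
The plan is to study the function $\overline\Lambda$ appearing in Theorem \ref{thm:velinim}, which, per Remark \ref{rem:nov}, is (outside the easy subcases) the nonincreasing generalized inverse of a strictly decreasing, right-continuous function $\underline\theta_{1,3}$ built from certain piecewise $\Cone$ correctors. Under this description, a flat piece of $\Ham$ of positive length at some height $\lambda$ is equivalent to a jump discontinuity of $\underline\theta_{1,3}$ at that $\lambda$; in particular, a flat piece at the boundary height $\max\{\beta,M\}$ (respectively $m+\beta$) corresponds to a jump at the right (respectively left) endpoint of the domain of $\underline\theta_{1,3}$.

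I would first dispose of the easy subcases (a) and (c). When $M=m+\beta$ in Case II or $m=0$ in Case III, Theorem \ref{thm:velinim} asserts that $\overline\Lambda$ has boundary values that both equal $\max\{\beta,M\}=m+\beta$, and its monotonicity immediately forces it to be identically this common value on the whole nonincreasing interval.

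For parts (b) and (d), I would handle the two endpoints separately. The ``if" directions in (b)(i) and (b)(ii) are established by using the assumed almost sure attainment of $0$ or $1$ by $V$ to explicitly build a pair of correctors at the critical height: one concatenates the branches $G_1^{-1}$ and $G_3^{-1}$ (for the flat piece at $\max\{\beta,M\}=M$) or $G_1^{-1}$ and $G_2^{-1}$ (for the flat piece at $m+\beta$) by gluing them at a location $x_\ast(\omega)$ where $V$ hits the relevant extreme value. Proposition \ref{prop:onesided} then converts these one-sided objects into witnesses of two distinct slopes at a single height. The unconditional flat piece at height $\beta$ in (d)(i) corresponds to an automatic jump of $\underline\theta_{1,3}$: at $\lambda=\beta$, the branch $G_3^{-1}$ has empty admissible set (since $\beta>M$), so $\underline\theta_{1,3}(\beta)=\theta_1(\beta)$, while for any $\lambda>\beta$ the set $\{\omega:V(0,\omega)\ge(\lambda-M)/\beta\}$ has strictly positive $\mathbb{P}$-probability by \eqref{eq:evren} and ergodicity, forcing a strictly smaller one-sided limit $\lim_{\lambda\downarrow\beta}\underline\theta_{1,3}(\lambda)$. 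The ``only if" directions proceed contrapositively: if the appropriate event \eqref{eq:kimbegen} or \eqref{eq:ezgiha} fails, one shows that the corresponding endpoint of the graph of $\underline\theta_{1,3}$ is approached continuously by the slopes of the valid correctors, so no jump (hence no flat piece) can form.

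The main obstacle will be the ``only if" directions. The subadditive/ergodic machinery gives control over averages $\frac{1}{L}\int_0^L f(V(y,\omega))\,dy$, but the events \eqref{eq:kimbegen} and \eqref{eq:ezgiha} are \emph{pointwise} conditions on $V$: the potential can attain $0$ or $1$ only on a Lebesgue-null set of isolated points, in which case $\mathbb{P}(V(0,\omega)\in\{0,1\})=0$ by stationarity and Fubini, so the ergodic theorem applied to $\one_{\{V=0\}}$ or $\one_{\{V=1\}}$ is mute. Overcoming this mismatch requires combining the uniform continuity \eqref{eq:bucon} of $V$ with a thresholded approximation: one estimates the relevant sub- and super-corrector slopes using the indicator $\one_{\{V<\varepsilon\}}$ or $\one_{\{V>1-\varepsilon\}}$, applies the ergodic theorem for each $\varepsilon>0$, and then takes $\varepsilon\downarrow 0$, arguing that the failure of \eqref{eq:kimbegen} (respectively \eqref{eq:ezgiha}) forces $\mathbb{P}(V(0,\omega)<\varepsilon)\to 0$ (respectively $\mathbb{P}(V(0,\omega)>1-\varepsilon)\to 0$) fast enough to rule out a jump of $\underline\theta_{1,3}$ at the corresponding endpoint. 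This last step is where the elementary probability bounds advertised in the overview of Section \ref{sec:interflat} will enter.
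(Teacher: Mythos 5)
There is a genuine gap, and it sits exactly where the real work of parts (b) and (d)(ii) lies. In the regime $\max\{\beta,M\}<m+\beta$, the single-branch correctors you propose to glue at a hitting point of $V$ are not even globally defined at the critical heights: at $\lambda=M$ the branch $G_1^{-1}(M-\beta V(x,\omega))$ leaves the domain of $G_1^{-1}$ wherever $\beta V(x,\omega)>M-m$ (which happens on a set of positive density, since $M-m<\beta$ and $\sup_x V=1$ by \eqref{eq:evren}), and at $\lambda=m+\beta$ the branch $G_2^{-1}(m+\beta-\beta V(x,\omega))$ is undefined wherever $m+\beta-\beta V(x,\omega)>M$ (which happens since $\inf_x V=0$). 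This is precisely why the paper replaces them by the piecewise correctors $\underline f_{1,3}^{\lambda},\overline f_{1,3}^{\lambda}$ of Subsection \ref{sub:ister} at nearby levels $M+\epsilon$ and $m+\beta-\epsilon$, glues them with $f_3^{M}$ resp.\ $f_1^{m+\beta}$ (Subsections \ref{sub:apis} and \ref{sub:eflat}), and thereby reduces the flat pieces at heights $M$ and $m+\beta$ to the strict inequalities $\theta_{1,3}(M)<\theta_3(M)$ and $\theta_1(m+\beta)<\theta_{1,3}(m+\beta)$, with $\theta_{1,3}$ the one-sided limits in \eqref{eq:chll}. More importantly, the ``if'' direction is not a soft consequence of the existence of a gluing point: a single location where $V$ hits $1$ (or $0$) produces no gap between asymptotic slopes by itself. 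What is needed is a lower bound on $\overline\theta_{1,3}(\lambda')-\theta_1(m+\beta)$ that is uniform as $\lambda'\uparrow m+\beta$, and the paper obtains it from an ergodic density estimate (Lemmas \ref{lem:aynaer}--\ref{lem:oncelli}): localizing the hitting event to a bounded window and using continuity yields, with probability at least $\tfrac12$, crossing intervals $(\underline y_i(\lambda'),\overline x_{i+1}(\lambda'))$ of length at least $\delta$, hence a positive spatial density of intervals on which the two branch slopes differ by at least $p_M-p_m$. Your proposal has no mechanism playing this role.

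Two further points. For the ``only if'' direction your thresholding idea is in the right spirit, but the controlled quantity must be the probability that $V$ comes within $(m+\beta-\lambda')/\beta$ of $1$ somewhere in a window $[-2K,2K]$ (as in \eqref{eq:karayk}), not the one-point marginal $\mathbb{P}(V(0,\omega)>1-\varepsilon)$, and no rate (``fast enough'') is needed: the paper's Lemma \ref{lem:vans} combines this window bound with Lemma \ref{lem:interlace} and a covering/counting decomposition over short and long crossing intervals. Finally, your justification of (d)(i) is incorrect: at $\lambda=\beta>M$ the $G_3$-branch is not empty (the set $\{x:\beta-\beta V(x,\omega)<m\}$ has positive density), so $\underline\theta_{1,3}(\beta)\ne\theta_1(\beta)$ in general, and the flat piece at height $\beta$ is not a jump of $\underline\theta_{1,3}$ within its domain; the correct (and much simpler) reason is that the crossover from the nonpositive-slope family to the $G_4$-branch forces $\theta_{1,3}(\beta)\le 0<\theta_4(\beta)$, combined with the construction gluing $f_4^{\beta}$ with $\underline f_{1,3}^{\beta+\epsilon}$ in Section \ref{sec:strong}.
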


It remains to characterize the flat pieces in the graph of $\Ham$ that are fully in the interior of the aforementioned nonincreasing piece. To this end, for every $\lambda\in[\beta,+\infty)\cap(M,m + \beta)$, consider two bi-infinite sequences $(\underline x_i(\lambda,\omega))_{i\in\mathbb{Z}}$ and $(\overline y_i(\lambda,\omega))_{i\in\mathbb{Z}}$ that satisfy the coupled recursion
\begin{equation}\label{eq:dikdur1}
\begin{aligned}
	\underline x_i(\lambda,\omega) &= \inf\{ x\ge \overline y_{i-1}(\lambda,\omega):\, \lambda - \beta V(x,\omega) \ge M \},\\
	\overline y_i(\lambda,\omega) &= \inf\{ x\ge \underline x_i(\lambda,\omega):\, \lambda - \beta V(x,\omega) < m \}.
\end{aligned}
\end{equation}
Similarly, consider $(\overline x_i(\lambda,\omega))_{i\in\mathbb{Z}}$ and $(\underline y_i(\lambda,\omega))_{i\in\mathbb{Z}}$ that satisfy
\begin{equation}\label{eq:dikdur2}
\begin{aligned}
	\overline x_i(\lambda,\omega) &= \inf\{ x\ge \underline y_{i-1}(\lambda,\omega):\, \lambda - \beta V(x,\omega) > M \},\\
	\underline y_i(\lambda,\omega) &= \inf\{ x\ge \overline x_i(\lambda,\omega):\, \lambda - \beta V(x,\omega) \le m \}.
\end{aligned}
\end{equation}
Both pairs of sequences are well-defined for $\mathbb{P}$-a.e.\ $\omega$ under suitable ``anchoring" conditions (see Subsection \ref{sub:ister}). With this notation, we introduce the events
\begin{equation}\label{eq:upcross}
	\begin{aligned}
		U_\lambda &= \{\omega\in\Omega: \exists\,j\in\mathbb{Z}\ \text{s.t.}\ \underline x_0(\lambda,\omega) = \overline x_j(\lambda,\omega) \}\\
		&= \{\omega\in\Omega: \inf\{x\ge \underline x_0(\lambda,\omega):\,\lambda - \beta V(x,\omega) > M \} = \underline x_0(\lambda,\omega) \}\\
		&= \{\omega\in\Omega: \lambda - \beta V(\cdot,\omega)\ \text{does not have a local maximum at}\ \underline x_0(\lambda,\omega) \}
	\end{aligned}
\end{equation}
and
\begin{equation}\label{eq:downcross}
	\begin{aligned}
		D_\lambda &= \{\omega\in\Omega: \exists\,k\in\mathbb{Z}\ \text{s.t.}\ \underline y_0(\lambda,\omega) = \overline y_k(\lambda,\omega) \}\\
		&= \{\omega\in\Omega: \inf\{x\ge \underline y_0(\lambda,\omega):\,\lambda - \beta V(x,\omega) < m \} = \underline y_0(\lambda,\omega) \}\\
		&= \{\omega\in\Omega: \lambda - \beta V(\cdot,\omega)\ \text{does not have a local minimum at}\ \underline y_0(\lambda,\omega) \}
	\end{aligned}
\end{equation}
which, respectively, involve the upcrossings and the downcrossings of the function $x\mapsto\lambda - \beta V(x,\omega)$. In each of the displays \eqref{eq:upcross} and \eqref{eq:downcross}, the second and the third descriptions of the event are equivalent to the first one (see Section \ref{sec:interflat}) and they are provided here to convey some intuition.

\begin{theorem}\label{thm:intermed}
	Under the assumptions in Theorem \ref{thm:velinim}, when $\max\{\beta,M\} < m + \beta$ (i.e., medium or strong potential, excluding their easy subcases), for every $\lambda\in(\max\{\beta,M\},m + \beta)$, the graph of the effective Hamiltonian $\Ham$ has a flat piece at height $\lambda$ if and only if
	\[ \mathbb{P}(U_\lambda \cap D_\lambda) < 1. \]
\end{theorem}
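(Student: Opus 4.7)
The plan has three main steps, plus identification of the central obstacle.

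\textbf{Step 1.} I would first reduce the existence of a flat piece of the graph of $\Ham$ at height $\lambda$ to a jump discontinuity of $\underline\theta_{1,3}$ at $\lambda$. By Remark \ref{rem:nov} and Definition \ref{def:lamb}, the function $\overline\Lambda$ is the nonincreasing generalized inverse of the strictly decreasing, right-continuous function $\underline\theta_{1,3}$, and Theorem \ref{thm:velinim} tells us $\Ham = \overline\Lambda$ on the nonincreasing piece. A purely order-theoretic argument then yields: $\overline\Lambda$ is constant on an interval at height $\lambda$ if and only if $\underline\theta_{1,3}(\lambda^-) > \underline\theta_{1,3}(\lambda^+)$, with the flat piece length equal to the jump size. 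The range $(\max\{\beta,M\}, m+\beta)$ in the statement is exactly the image of $\underline\theta_{1,3}$, so this reduction covers all interior flat pieces (the endpoint cases being dealt with in Theorem \ref{thm:ergthmapp}).

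\textbf{Step 2.} Next, I would identify the one-sided limits $\underline\theta_{1,3}(\lambda^\pm)$ with cycle-average formulas associated to the two pairs of sequences \eqref{eq:dikdur1} and \eqref{eq:dikdur2}. The corrector built in Subsection \ref{sub:ister} has derivative alternating between $G_3^{-1}(\lambda - \beta V(\cdot,\omega))$ on $[\underline x_i(\lambda,\omega), \overline y_i(\lambda,\omega)]$ and $G_1^{-1}(\lambda - \beta V(\cdot,\omega))$ on $[\overline y_i(\lambda,\omega), \underline x_{i+1}(\lambda,\omega)]$; by the ergodic theorem applied to the stationary transition-point process, $\underline\theta_{1,3}(\lambda)$ equals the corresponding cycle-average. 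The key monotonicity observation is that $\underline x_i(\cdot,\omega)$ and $\overline y_i(\cdot,\omega)$ are right-continuous in $\lambda$ (the defining inequalities $\lambda - \beta V \ge M$ and $\lambda - \beta V < m$ are preserved under $\lambda' \downarrow \lambda$) while their left limits as $\lambda' \uparrow \lambda$ coincide with $\overline x_i(\lambda,\omega)$ and $\underline y_i(\lambda,\omega)$ respectively (since taking $\lambda' \uparrow \lambda$ converts $\ge$ into $>$ and $<$ into $\le$). A dominated-convergence argument then represents $\underline\theta_{1,3}(\lambda^+)$ as the cycle-average along $(\underline x_i, \overline y_i)$ and $\underline\theta_{1,3}(\lambda^-)$ as the cycle-average along $(\overline x_i, \underline y_i)$.

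\textbf{Step 3.} Finally, I would subtract the two cycle-averages and identify the sign of the difference. On $U_\lambda \cap D_\lambda$ the two sequences coincide index by index (the identity $\underline x_0 = \overline x_j$ for some $j$ propagates to all $i$ by the stationary renewal structure, and similarly $\overline y$ matches $\underline y$), so the two averages agree. On $U_\lambda^c \cup D_\lambda^c$ the sequences differ on an interval of positive length: on $U_\lambda^c$ a local maximum of $\lambda - \beta V(\cdot,\omega)$ at height $M$ at $\underline x_0$ contributes to the $G_3^{-1}$ branch in the representation of $\underline\theta_{1,3}(\lambda^-)$ but not in that of $\underline\theta_{1,3}(\lambda^+)$, and symmetrically for $D_\lambda^c$ at height $m$. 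Because $p_m < p_M$ forces $G_1^{-1}(y) < G_3^{-1}(y)$ for $y \in (m,M)$ by Condition \ref{cond:pm}, these discrepancies all contribute with the same sign, yielding $\underline\theta_{1,3}(\lambda^-) > \underline\theta_{1,3}(\lambda^+)$ if and only if $\mathbb{P}(U_\lambda \cap D_\lambda) < 1$. Combined with Step 1 this gives the theorem.

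\textbf{Main obstacle.} The delicate point is the rigorous execution of Step 2: interchanging $\lambda' \to \lambda^\pm$ with the ergodic theorem and ensuring that the expected cycle length remains finite and bounded away from zero uniformly in a neighborhood of $\lambda$. This relies on the elementary probability bounds and anchoring conditions alluded to in the outline of Section \ref{sec:interflat}, together with the equivalence of the three descriptions of $U_\lambda$ and $D_\lambda$ in \eqref{eq:upcross}--\eqref{eq:downcross}, which must be verified and is needed both for the renewal-style matching on $U_\lambda \cap D_\lambda$ and for the sign bookkeeping on its complement.
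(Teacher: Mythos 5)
Your proposal is correct and follows essentially the same route as the paper: the reduction of the flat piece to a positive gap in the generalized-inverse picture, the matching of the two pairs of crossing sequences on $U_\lambda\cap D_\lambda$ via stationarity (the paper's Lemmas \ref{lem:adqat}--\ref{lem:mex2}), and the positive-density discrepancy of size at least $(p_M-p_m)\delta$ between the two correctors' derivatives when $\mathbb{P}(U_\lambda\cap D_\lambda)<1$ (the paper's Lemma \ref{eq:maliki}). The only notable difference is that the paper states the criterion as $\underline\theta_{1,3}(\lambda)<\overline\theta_{1,3}(\lambda)$ at the fixed $\lambda$ (via Theorem \ref{thm:gennet} and display \eqref{eq:nadad}, together with Lemmas \ref{lem:basic}, \ref{lem:ciftkas} and \ref{lem:solsag}, which are already in place from the construction of $\overline\Lambda$), so the $\lambda'\to\lambda^{\pm}$ interchange that you single out as the main obstacle in Step 2 is not actually needed: comparing the two correctors at the same $\lambda$, as in your Step 3, already suffices.
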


\subsection{Some remarks and generalizations}\label{sub:remarks}

\subsubsection{The heights of all flat pieces}

It follows readily from Theorems \ref{thm:velinim}, \ref{thm:ergthmapp} and \ref{thm:intermed} that, under Condition \ref{cond:pm}, the set
\begin{equation}\label{eq:kume}
	\mathcal{L}(\Ham) = \{ \lambda\in\mathbb{R}:\,\text{the graph of $\Ham$ has a flat piece height $\lambda$} \}
\end{equation}
does not depend on the exact shape of the graphs of the strictly monotone functions $G_1,G_2,G_3,G_4$. Indeed:
\begin{itemize}
	\item when $\max\{\beta,M\} \ge m + \beta$ (which covers weak potential as well as the easy subcases of medium and strong potential), $\mathcal{L}(\Ham)$ is a subset of $\{\beta\}\cup\{m+\beta\}\cup\{M\}$ containing $\beta$ and it depends only on $\beta,m,M$;
	\item when $\max\{\beta,M\} < m + \beta$ (which covers medium and strong potential, except their easy subcases), $\mathcal{L}(\Ham)$ is a subset of $\{\beta\}\cup[\max\{\beta,M\},m + \beta]$ containing $\beta$ and it depends on $\beta,m,M$ plus the support of the law of $x\mapsto V(x,\omega)$ under $\mathbb{P}$.
\end{itemize}
Moreover, in the latter regime, random potentials can be constructed to make $\mathcal{L}(\Ham)$ equal to any desired finite or countable subset of $\{\beta\}\cup[\max\{\beta,M\},m + \beta]$ containing $\beta$. See Example \ref{ex:Markov} and Remark \ref{rem:desir} for such a construction.

\subsubsection{No reduction to smooth potentials, etc.}

As we mentioned in Subsection \ref{sub:ATY}, Armstrong, Tran and Yu \cite{ATY16} use a stability lemma to argue that, in order to prove homogenization, it suffices to consider smooth potentials whose level sets have no cluster points. However, since our results are mainly concerned with the fine properties of the effective Hamiltonian and their dependence on the potential, we do not make such reductions. Indeed, in all our constructions and proofs (except in Section \ref{sec:examples} where we analyze examples), we impose no conditions on the potential beyond \eqref{eq:stationary}--\eqref{eq:evren}. Similarly, we directly consider all admissible combinations of inequalities between $\beta, m + \beta$ and $M$ (see Cases I, II and III in Theorem \ref{thm:velinim}) instead of using a stability lemma to break equalities.

\subsubsection{Effective Hamiltonians under reflection}\label{subsub:mirror}

Given any double-well function $G^-$ that satisfies \eqref{eq:coercive} and Condition \ref{cond:pm}, consider its mirror image $G^+$ defined by
\[ \text{$G^+(p) = G^-(-p)$\ \ for every $p\in\mathbb{R}$.} \]
For any $\theta\in\mathbb{R}$, consider the HJ equation in \eqref{eq:originalHJ} with $G = G^+$ and initial condition $u^\epsilon(0,x,\omega) = \theta x$. Substituting $-x$ for $x$ corresponds to replacing $\theta$ with $-\theta$, the function $G^+$ with $G^-$, and the potential $x\mapsto V(x,\omega)$ with $x\mapsto V(-x,\omega)$. Therefore, Theorem \ref{thm:velinim} applies to both $G^-$ and $G^+$. Denote the corresponding effective Hamiltonians by $\Ham^-$ and $\Ham^+$, respectively. It is natural to ask the following question about them:
\begin{equation}\label{eq:simit}
	\text{Is\ \ $\Ham^+(\theta) = \Ham^-(-\theta)$\ \  for every $\theta\in\mathbb{R}$?}
\end{equation}
The answer is a corollary of Theorem \ref{thm:velinim}:

\begin{itemize}
	
	\item When $\max\{\beta,M\} \ge m + \beta$, 
	it is shown in \eqref{eq:sakin1}--\eqref{eq:sakin3} and Definition \ref{def:lamb}(a,c) that the effective Hamiltonian is determined by the functions $\theta_1,\theta_2,\theta_3,\theta_4$ (defined in \eqref{eq:ddnono}). Since the values of these functions do not change if we replace $x\mapsto V(x,\omega)$ with $x\mapsto V(-x,\omega)$, the answer to \eqref{eq:simit} is yes.
	
	\item When $\max\{\beta,M\} < m + \beta$, 
	it is shown in \eqref{eq:sakin2}--\eqref{eq:sakin3} and Definition \ref{def:lamb}(b,d) that the effective Hamiltonian depends not only on some of the functions $\theta_1,\theta_2,\theta_3,\theta_4$, but also on another function $\underline\theta_{1,3}$ (defined in \eqref{eq:hurma1}). Recall from Remark \ref{rem:nov} that this function is associated to correctors that are piecewise continuously differentiable. As we will see in Subsection \ref{sub:ister}, the construction of those correctors involves the bi-infinite sequences introduced in \eqref{eq:dikdur1}--\eqref{eq:dikdur2}. Therefore, in general, the values of $\underline\theta_{1,3}$ change if we replace $x\mapsto V(x,\omega)$ with $x\mapsto V(-x,\omega)$, and the answer to \eqref{eq:simit} is no.
	
\end{itemize}

\noindent Moreover, in the second parameter regime above, it follows from Theorem \ref{thm:intermed} that, even the sets $\mathcal{L}(\Ham^-)$ and $\mathcal{L}(\Ham^+)$ (with the definition in \eqref{eq:kume}) are not generally equal.

\subsubsection{A generalization to even-symmetric triple-well Hamiltonians}

Given any double-well function $G^-$ that satisfies \eqref{eq:coercive} and Condition \ref{cond:pm}, define $G^s$ by
\[ G^s(p) = \begin{cases} G^-(p) & \text{if}\ p\le 0,\\ G^+(p) & \text{if}\ p > 0, \end{cases} \]
where $G^+$ is the mirror image of $G^-$ as in the previous remark. Note that $G^s$ is a continuous, coercive and even-symmetric triple-well function with an absolute minimum at $0$ where $G^s(0) = 0$. The superscript $s$ stands for the word symmetric.

It follows immediately from Theorem \ref{thm:velinim} and a gluing result (see \cite[Lemma 4.1]{ATY16}) that the HJ equation in \eqref{eq:originalHJ} with $G = G^s$ homogenizes (which is of course covered by \cite{ATY16}) and the effective Hamiltonian $\Ham^s$ is given by
\[ \Ham^s(\theta) = \begin{cases} \Ham^-(\theta) & \text{if}\ \theta\le 0,\\ \Ham^+(\theta) & \text{if}\ \theta > 0, \end{cases} \]
with our notation in the previous remark. Recalling our answer to \eqref{eq:simit} there, we deduce the following properties of $\Ham^s$:
\begin{itemize}
	\item if $\max\{\beta,M\} > m + \beta$, 
		  then $\Ham^s$ is even-symmetric but not level-set convex;
	\item if $\max\{\beta,M\} = m + \beta$, 
		  then $\Ham^s$ is even-symmetric and level-set convex;
	\item if $\max\{\beta,M\} < m + \beta$, 
		  then $\Ham^s$ is level-set convex but not generally even-symmetric.
\end{itemize}

This trichotomy rigorously answers the questions in \eqref{eq:questions} in the context of triple-well Hamiltonians when $d=1$. To the best of our knowledge, it was first given by Qian, Tran and Yu \cite{QTY18}. Precisely, in the third parameter regime above, they consider a periodic potential and briefly mention how one can show that $\Ham^s$ is not even-symmetric (see \cite[Remark 4]{QTY18}). Additionally, for a periodic example with a piecewise linear and even-symmetric triple-well function $G^s$, they use a Lax-Friedrichs based numerical method to plot $\Ham^s$ which is then observed to be not even-symmetric (see \cite[Numerical Example 2]{QTY18}). We believe that our results completely clarify this connection between the emergence of level-set convexity and the loss of even-symmetry in the 1-d stationary \& ergodic setting.

\subsubsection{Removing Condition \ref{cond:pm}}

The fully constructive approach that we take in this paper under Condition \ref{cond:pm} can be adopted in the more general setting of the homogenization result of Armstrong, Tran and Yu \cite{ATY16} to obtain a formula for the effective Hamiltonian $\Ham$ on the whole real line and to identify the set $\mathcal{L}(\Ham)$ (defined in \eqref{eq:kume}). Recalling their proof which is outlined in Subsection \ref{sub:ATY}, it essentially suffices to consider the second base case of their induction argument and focus on the interval where $\Ham$ is stated to be nonincreasing in \eqref{eq:ATYdis2}. 
We plan to include this generalization in a future paper.

\section{Preliminaries}\label{sec:prelim}

For an introduction to viscosity solutions of first-order HJ equations, see, e.g., \cite{BC97,E10}. Solutions, subsolutions and supersolutions of all HJ equations considered in this paper are to be understood in the viscosity sense unless noted otherwise. We henceforth drop the word viscosity for the sake of brevity.

We start by stating an existence \& uniqueness result that is tailored to our setting and purposes. It is covered by, e.g.,  the version in \cite[Theorem A.1]{DZ15}.

\begin{theorem}\label{thm:vartek}
	If \eqref{eq:coercive} and \eqref{eq:bucon} hold, then for every $g\in\UC(\mathbb{R})$, the HJ equations in \eqref{eq:originalHJ} (with any fixed $\omega\in\Omega$) and \eqref{eq:effectiveHJ} (with a coercive $\overline H\in\Con(\mathbb{R})$ to be determined) have unique solutions $u_g^\epsilon(\cdot,\cdot,\omega),\overline u_g \in\UC([0,+\infty)\times\mathbb{R})$ that satisfy
	\[ u_g^\epsilon(0,x,\omega) = \overline u_g(0,x) = g(x),\quad x\in\mathbb{R}.\]
	Moreover, if $g\in\Lip(\mathbb{R})$, the space of Lipschitz continuous functions on $\mathbb{R}$ with a uniform Lipschitz constant, then $u_g^\epsilon(\cdot,\cdot,\omega),\overline u_g\in\Lip([0,+\infty)\times\mathbb{R})$.
\end{theorem}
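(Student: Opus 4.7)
The plan is to dispatch uniqueness via a comparison principle, obtain existence via Perron's method, and then extract the regularity claims using the coercivity of $G$ and $\overline H$ together with the bounded uniform continuity of $V$.

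For uniqueness, suppose $u_1, u_2 \in \UC([0,+\infty)\times\mathbb{R})$ share the initial datum $g$ and both solve \eqref{eq:originalHJ} for the same fixed $\omega$. I would apply the standard Crandall--Ishii doubling-of-variables technique: examine the maximum over $[0,T]\times\mathbb{R}$ of
\[ u_1(t,x) - u_2(s,y) - \tfrac{1}{2\alpha}\bigl((x-y)^2 + (t-s)^2\bigr) - \delta(|x|^2 + t). \]
Since $u_1, u_2$ are uniformly continuous, hence of at most linear growth, the penalization $\delta(|x|^2 + t)$ forces the maximum to be attained. Passing $\alpha \to 0$ and then $\delta \to 0$, using the viscosity inequalities together with the continuity of $G$ and of $V(\cdot,\omega)$, yields $u_1 \le u_2$; swapping roles gives equality. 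The same scheme handles \eqref{eq:effectiveHJ} once $\overline H$ is known to be continuous and coercive.

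For existence, Perron's method is the natural tool. When $g \in \Lip(\mathbb{R})$ with constant $L$, the affine-in-time barriers $(t,x) \mapsto g(x) \pm Ct$ are respectively super- and sub-solutions of \eqref{eq:originalHJ} whenever
\[ C \ge \sup_{|p|\le L}|G(p)| + \beta\|V(\cdot,\omega)\|_\infty, \]
which is finite by \eqref{eq:coercive} and \eqref{eq:bucon}. Perron's method then produces a solution sandwiched between these barriers, yielding in particular the bound $|u_g^\epsilon(t,x,\omega) - g(x)| \le Ct$, and hence temporal Lipschitz regularity. For general $g \in \UC(\mathbb{R})$, I would approximate $g$ uniformly by $g_n \in \Lip(\mathbb{R})$; the comparison principle gives $\|u_n - u_m\|_\infty \le \|g_n - g_m\|_\infty$, so $u_n$ converges uniformly to a function $u \in \UC([0,+\infty)\times\mathbb{R})$ which is, by stability of viscosity solutions under uniform convergence, the desired solution.

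The hard part is the spatial Lipschitz estimate when $g \in \Lip$, since the explicit $x$-dependence of the Hamiltonian through $V$ rules out a direct translation argument. The cleanest route I know is a vanishing-viscosity Bernstein estimate applied to smooth approximations $V_n$ of $V$: differentiating the regularized equation in $x$ and exploiting the coercivity of $G$ yields a gradient bound that, after a careful arrangement, depends only on $L$, $G$, $\beta$ and $\|V\|_\infty$ rather than on $\|\partial_x V_n\|_\infty$. The double limit (in the viscosity parameter and in the smoothing of $V$) is then controlled by the comparison principle. Since the statement is explicitly cited to \cite[Theorem A.1]{DZ15}, I would ultimately defer the technical bookkeeping of this step to that reference.
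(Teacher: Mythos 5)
The paper does not actually prove this theorem: it is quoted from \cite[Theorem A.1]{DZ15} (just as the comparison principle, Proposition \ref{prop:comprin}, is deferred to \cite[Proposition A.2]{DZ15}). So any correct standard argument is acceptable, and your overall outline --- uniqueness by doubling of variables, existence by Perron's method between the barriers $g(x)\pm Ct$, and the $L^\infty$-contraction to pass from Lipschitz to general $\UC$ data --- is the classical route that such references follow.

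The genuine soft spot is your treatment of the spatial Lipschitz bound. Differentiating the regularized equation $\partial_t u+G(\partial_x u)+\beta V_n=\nu\,\partial_x^2 u$ in $x$ produces the zeroth-order term $\beta\,\partial_x V_n$, and it is exactly this term that a naive Bernstein/maximum-principle computation cannot discard; the assertion that the resulting gradient bound depends only on $L$, $G$, $\beta$ and $\|V\|_\infty$ is therefore not something you can claim without a much more careful argument, and as written that step is a gap. It is also unnecessary: the standard route uses coercivity \eqref{eq:coercive} directly. The barriers give $|u_g^\epsilon(t,x,\omega)-g(x)|\le Ct$; applying the comparison principle to $u(\cdot+h,\cdot,\omega)$ and $u(\cdot,\cdot,\omega)$ upgrades this to $\|\partial_t u\|_\infty\le C$; then the equation itself forces $G(\partial_x u)\le C+\beta\|V(\cdot,\omega)\|_\infty$ in the viscosity sense, and coercivity bounds $|\partial_x u|$ --- no smoothing of $V$ and no vanishing viscosity are needed. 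A minor further caveat: in the uniqueness step, since $G$ is only continuous (not uniformly continuous on $\mathbb{R}$) and $\UC$ solutions need not have bounded gradients, the quadratic penalization $\delta|x|^2$ requires care, because the sub- and supersolution test gradients differ by $2\delta x$ while the common term $(x-y)/\alpha$ need not stay bounded; one normally uses a penalizer with bounded gradient such as $\delta\sqrt{1+x^2}$ and exploits the separable structure, under which the $x$-dependence contributes only $\beta\,\omega_V(|x-y|)$ uniformly in $p$. Since both you and the paper ultimately defer to \cite[Theorem A.1]{DZ15}, these are repairs to the sketch rather than doubts about the statement itself.
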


When $g(x) = \theta x$ for some $\theta\in\mathbb{R}$, we will write $u_\theta^\epsilon$ and $\overline u_\theta$ instead of $u_g^\epsilon$ and $\overline u_g$, respectively. Moreover, when $\epsilon = 1$ and \eqref{eq:originalHJ} becomes
\begin{equation}\label{eq:originalHJ1}
\partial_tu(t,x,\omega) + G(\partial_xu(t,x,\omega)) + \beta V(x,\omega) = 0,\quad(t,x)\in(0,+\infty)\times\mathbb{R},
\end{equation}
we will drop the superscript of $u_\theta^\epsilon$ and simply write $u_\theta$. With this notation,
\begin{align}
	u_\theta^\epsilon(t,x,\omega) &= \epsilon u_\theta(t/\epsilon,x/\epsilon,\omega),\label{eq:simden}\\
	u_\theta(t,x,\tau_y\omega) &= u_\theta(t,x+y,\omega) - \theta y\quad\text{and}\label{eq:araya}\\
	\overline u_\theta(t,x) &= -\Ham(\theta)t + \theta x\label{eq:okyay}
\end{align}
for every $t\ge0$ and $x,y\in\mathbb{R}$. Indeed, in each line, the functions on the left- and the right-hand sides are solutions of the same HJ equation with the same linear initial condition, so they are equal by uniqueness.

If the HJ equation in \eqref{eq:originalHJ} homogenizes to the HJ equation in \eqref{eq:effectiveHJ} (as claimed in Theorem \ref{thm:velinim} with general $g\in\UC(\mathbb{R})$), then we can choose to restrict our attention to linear initial conditions and deduce that, for every $\theta\in\mathbb{R}$ and $\mathbb{P}$-a.e.\ $\omega$,
$u_\theta^\epsilon(\cdot,\cdot,\omega)$ converges locally uniformly on $[0,+\infty)\times\mathbb{R}$ as $\epsilon\to0$ to $\overline u_\theta$. In particular, we have pointwise convergence at $(t,x) = (1,0)$, which is equivalent to
\begin{equation}\label{eq:suhnaz}
	\mathbb{P}\left( \lim_{\epsilon\to0}\epsilon u_\theta(1/\epsilon,0,\omega) = -\Ham(\theta) \right) = 1
\end{equation}
by \eqref{eq:simden} and \eqref{eq:okyay}. In short, homogenization implies the almost sure limit in \eqref{eq:suhnaz}. The following result (Theorem \ref{thm:ottur}) states that the converse is also true. It involves the quantities
\[ \Ham^L(\theta) = \liminf_{t\to+\infty}\frac{-u_\theta(t,0,\omega)}{t} \quad \text{and} \quad\Ham^U(\theta) = \limsup_{t\to+\infty}\frac{-u_\theta(t,0,\omega)}{t}. \]
Since $u_\theta(\cdot,\cdot,\omega)\in\Lip([0,+\infty)\times\mathbb{R})$ by Theorem \ref{thm:vartek} and it satisfies \eqref{eq:araya}, our ergodicity assumption ensures that $\Ham^L(\theta)$ and $\Ham^U(\theta)$ are $\mathbb{P}$-essentially constant. Whenever $\Ham^L(\theta)$ and $\Ham^U(\theta)$ are equal, we set
\begin{equation}\label{eq:pack}
	\Ham(\theta) = \Ham^L(\theta) = \Ham^U(\theta).
\end{equation}

\begin{theorem}\label{thm:ottur}
	Recall the (stationary \& ergodic) setting in Section \ref{sec:intro}. In particular, assume that \eqref{eq:coercive}--\eqref{eq:bucon} hold. If $\Ham^L(\theta) = \Ham^U(\theta)$ for every $\theta\in\mathbb{R}$, then the HJ equation in \eqref{eq:originalHJ} homogenizes to a HJ equation of the form in \eqref{eq:effectiveHJ} as in the statement of Theorem \ref{thm:velinim}. Moreover, the effective Hamiltonian $\Ham$ is given by \eqref{eq:pack}.
\end{theorem}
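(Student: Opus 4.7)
The plan is to leverage the uniform Lipschitz bounds from Theorem~\ref{thm:vartek} and the comparison principle for viscosity solutions to upgrade the pointwise a.s.\ limit into full locally uniform convergence, first for affine initial data and then for arbitrary $g\in\UC(\mathbb{R})$ by approximation. First I would verify that the function $\Ham$ defined by \eqref{eq:pack} is continuous and coercive: writing $v_\theta(t,x,\omega):=u_\theta(t,x,\omega)-\theta x$, which satisfies $\partial_t v_\theta+G(\theta+\partial_x v_\theta)+\beta V=0$ with $v_\theta(0,\cdot)=0$, the standard comparison principle together with uniform continuity of $G$ on bounded sets yields $\sup_\omega|u_\theta(t,0,\omega)-u_{\theta'}(t,0,\omega)|\le t\,\eta_R(|\theta-\theta'|)$ for $|\theta|,|\theta'|\le R$ and a modulus $\eta_R$ independent of $t$ and $\omega$; dividing by $t$ and passing to the limit gives continuity of $\Ham$, while coercivity follows from the coercivity of $G$ and the equi-Lipschitz bounds on $u_\theta$.

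The main step is to prove that for each $\theta\in\mathbb{R}$, on a single full-measure event $\Omega_0\in\mathcal{F}$, $u_\theta^\epsilon(\cdot,\cdot,\omega)\to\overline u_\theta(t,x)=-\Ham(\theta)t+\theta x$ locally uniformly. The hypothesis, applied to each $\theta$ in a countable dense set $Q\subset\mathbb{R}$, gives a full-measure event on which $u_\theta^\epsilon(1,0,\omega)\to-\Ham(\theta)$; intersecting over $Q$ produces $\Omega_0$, and the modulus from the previous paragraph extends the convergence from $Q$ to every $\theta\in\mathbb{R}$ on $\Omega_0$. To transfer from the single point $(1,0)$ to arbitrary $(t,x)\in(0,\infty)\times\mathbb{R}$, I would introduce the half-relaxed limits
\[
\overline F(t,x):=\limsup_{\epsilon\to 0,\,(s,y)\to(t,x)}u_\theta^\epsilon(s,y,\omega),\qquad
\underline F(t,x):=\liminf_{\epsilon\to 0,\,(s,y)\to(t,x)}u_\theta^\epsilon(s,y,\omega),
\]
which are Lipschitz by equicontinuity with $\overline F(0,x)=\underline F(0,x)=\theta x$ and $\overline F(t,0)=\underline F(t,0)=-\Ham(\theta)t$. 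The crux is to show that $\overline F$ (resp.\ $\underline F$) is a subsolution (resp.\ supersolution) of the deterministic equation $\partial_t F+\Ham(\partial_x F)=0$. For this I would run Evans' perturbed test function method using affine-data solutions as approximate correctors: at a contact point of $\overline F-\phi$ with a smooth test $\phi$ of slope $\xi=\partial_x\phi$, the planar family $\psi^\epsilon(s,y,\omega):=u_\xi^\epsilon(s,y,\omega)-\xi y$ plays the role of the missing corrector, and the pointwise convergence $\psi^\epsilon(s,0,\omega)\to-\Ham(\xi)s$ lets one close the required inequality. The comparison principle for the deterministic limit equation (valid since $\Ham$ is continuous and coercive) then gives $\overline F\le\underline F$, so $\overline F=\underline F=\overline u_\theta$, whence locally uniform convergence.

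Extending from affine to $g\in\UC(\mathbb{R})$ is a standard density argument using the sup-norm contraction property of the HJ semigroup guaranteed by Theorem~\ref{thm:vartek}: approximating $g$ uniformly on compacts by piecewise-affine functions and using continuous dependence of $\epsilon$-solutions on initial data (uniformly in $\epsilon$) transfers the affine-case convergence to general $g$. The main obstacle is pinning down $\overline F=\underline F$ on the quadrant in the previous step. The naive attempt to transfer convergence from $x=0$ to $x\ne 0$ via \eqref{eq:araya} is blocked because the shift $\tau_{x/\epsilon}\omega$ escapes every fixed null set as $\epsilon\to 0$, so one cannot bypass the half-relaxed limits and deterministic comparison route; within that route, closing the perturbed test function inequality without access to an actual sublinear corrector at the contact slope $\xi$---which is precisely what the paper's subsequent sections construct, and which may fail at the heights of the flat pieces of $\Ham$---is the delicate point that the affine-substitute-corrector trick is designed to handle.
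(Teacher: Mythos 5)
The crux step of your argument has a genuine gap, and it is exactly where the hypothesis must enter. In the perturbed test function argument, to rule out $\partial_t\phi(t_0,x_0)+\Ham(\xi)>0$ at a contact point of $\overline F-\phi$ you must compare $u^\epsilon_\theta$ with $\phi^\epsilon(s,y)=\phi(s,y)+\bigl(u^\epsilon_\xi(s,y,\omega)-\xi y+\Ham(\xi)s\bigr)$ on a space--time neighborhood of $(t_0,x_0)$, and for this you need the bracketed ``corrector substitute'' to tend to $0$ \emph{uniformly near $(t_0,x_0)$}. The hypothesis \eqref{eq:suhnaz} (equivalently $\Ham^L=\Ham^U$) only controls it on the line $y=0$; since $x_0\neq0$ in general, recentering via \eqref{eq:araya} evaluates the environment at $\tau_{x_0/\epsilon}\omega$, which escapes every fixed null set as $\epsilon\to0$ --- precisely the obstruction you flagged for the naive transfer. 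So the ``affine-substitute-corrector trick'' is circular: closing the subsolution inequality requires locally uniform convergence of the affine-data solutions away from the spatial origin, which is the statement being proved. Note that the hypothesis enters your argument \emph{only} through this step (the time-axis values of $\overline F,\underline F$ are never used, since the deterministic comparison needs only the $t=0$ data), so the gap is essential. Any correct proof must use stationarity and ergodicity quantitatively in the transfer from $x=0$ to $x\neq0$: e.g., Egorov's theorem plus the ergodic theorem show that for a.e.\ $\omega$ the set of shifts $y$ at which $\epsilon u_\xi(\cdot/\epsilon,0,\tau_y\omega)$ converges uniformly has spatial density close to one, and the deterministic equi-Lipschitz bound then carries the limit from a nearby good point to $x/\epsilon$. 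Your proposal invokes ergodicity nowhere in this transfer.

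For comparison, the paper's proof is a two-line reduction: it observes from \eqref{eq:araya} and ergodicity that the uniform Lipschitz constant of $u_\theta(\cdot,\cdot,\omega)$ is $\mathbb{P}$-essentially constant and then invokes \cite[Lemma 4.1]{DK17}, which is exactly the statement that the almost sure pointwise limit \eqref{eq:suhnaz} for every $\theta$, together with deterministic Lipschitz bounds, yields homogenization for all $g\in\UC(\mathbb{R})$ with $\Ham$ given by \eqref{eq:pack}. Your outer scaffolding is otherwise reasonable --- continuity/coercivity of $\Ham$, a single full-measure event via a countable dense set of slopes, and the density argument from affine to general $g\in\UC(\mathbb{R})$ --- though note a secondary inaccuracy: Proposition~\ref{prop:comprin} applied to two different linear initial data gives an infinite right-hand side, so your modulus estimate $|u_\theta(t,0,\omega)-u_{\theta'}(t,0,\omega)|\le t\,\eta_R(|\theta-\theta'|)$ needs a finite-speed-of-propagation/localization argument (standard, using the uniform gradient bounds coming from coercivity). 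The middle step, however, must be replaced by an Egorov/ergodic-theorem argument of the type above, or by simply citing the lemma as the paper does.
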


\begin{proof}
	It follows from \eqref{eq:araya} and our ergodicity assumption that the uniform Lipschitz constant of $u_\theta(\cdot,\cdot,\omega)$ is $\mathbb{P}$-essentially constant. Hence, the desired result is a special case of \cite[Lemma 4.1]{DK17}.
\end{proof}

For every $\lambda \in [\beta,+\infty)$ and $\omega\in\Omega$, consider the static HJ equation
\begin{equation}\label{eq:aux}
	G(f'(x,\omega)) + \beta V(x,\omega) = \lambda,\quad x\in\mathbb{R}.
\end{equation}
Recall that a function $f = f(\cdot,\omega)\in\Con(\mathbb{R})$ is a subsolution of \eqref{eq:aux} if
\begin{equation}\label{eq:vsol}
	G(p) + \beta V(x,\omega) \le \lambda \quad\text{for every $p\in D^+f(x,\omega)$}
\end{equation}
and every $x\in\mathbb{R}$. Similarly, a function $f = f(\cdot,\omega)\in\Con(\mathbb{R})$ is a supersolution of \eqref{eq:aux} if
\begin{equation}\label{eq:vsag}
G(p) + \beta V(x,\omega) \ge \lambda \quad\text{for every $p\in D^-f(x,\omega)$}
\end{equation}
and every $x\in\mathbb{R}$. Finally, a function $f = f(\cdot,\omega)\in\Con(\mathbb{R})$ is a solution of \eqref{eq:aux} if both of \eqref{eq:vsol} and \eqref{eq:vsag} are satisfied at every $x\in\mathbb{R}$. Here, $D^+f(x,\omega)$ and $D^-f(x,\omega)$ denote the (Frech\'et) superdifferential and subdifferential of $f$ at $x$, respectively. In particular, if $f$ is left and right differentiable at $x$, then
\[ D^+f(x,\omega) = [f_+'(x,\omega), f_-'(x,\omega)]\quad\text{and}\quad D^-f(x,\omega) =  [f_-'(x,\omega), f_+'(x,\omega)]. \]

There is an elementary connection between the static HJ equation in \eqref{eq:aux} and the evolutionary HJ equation in \eqref{eq:originalHJ1}. 
Namely, if $f(\cdot,\omega)\in\Con(\mathbb{R})$ is a subsolution (resp.\ supersolution) of \eqref{eq:aux}, then $u(\cdot,\cdot,\omega)\in\Con([0,+\infty)\times\mathbb{R})$, defined by
\[ u(t,x,\omega) = -\lambda t + f(x,\omega), \]
is a subsolution (resp.\ supersolution) 
of \eqref{eq:originalHJ}.
In Sections \ref{sec:weak}--\ref{sec:strong}, we will construct subsolutions and supersolutions of \eqref{eq:originalHJ1} that are of this form. 
Moreover, whenever applicable (which will turn out to be outside the flat pieces of the graph of the effective Hamiltonian), we will refer to the following proposition to establish the almost sure limit in \eqref{eq:suhnaz}. It is a customized version of a classical result (see, e.g., \cite[Theorem 4.1]{RT00}).

\begin{proposition}\label{prop:demtut}
	Assume \eqref{eq:coercive} and \eqref{eq:bucon}. If there exist a constant $\theta\in\mathbb{R}$, a function $f:\mathbb{R}\times\Omega\to\mathbb{R}$ and an event $\Omega_0\in\mathcal{F}$ with $\mathbb{P}(\Omega_0) = 1$ such that
	\begin{align}
	&\text{$f_-'(\cdot,\omega),f_+'(\cdot,\omega)$ exist and they are uniformly bounded on $\mathbb{R}$},\label{eq:kostas}\\
	&\lim_{x\to\pm\infty} \frac{f(x,\omega)}{x} = \theta,\quad\text{and}\nonumber\\
	&\text{$f(\cdot,\omega)$ is a solution of \eqref{eq:aux} with some $\lambda \in [\beta,+\infty)$}\nonumber
	\end{align}
	for every $\omega\in\Omega_0$, then
	\[ \Ham(\theta) = \Ham^L(\theta) = \Ham^U(\theta) = \lambda. \]
\end{proposition}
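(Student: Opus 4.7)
The plan is to invoke Theorem \ref{thm:ottur}: it suffices to prove, for every $\omega \in \Omega_0$, that $u_\theta(t, 0, \omega)/t \to -\lambda$ as $t \to +\infty$. To that end, I would set
\[ u(t, x, \omega) = -\lambda t + f(x, \omega), \]
which is a solution of \eqref{eq:originalHJ1} by the observation immediately preceding the proposition. Both $u(\cdot, \cdot, \omega)$ and $u_\theta(\cdot, \cdot, \omega)$ are spatially Lipschitz: the former via \eqref{eq:kostas}, and the latter via Theorem \ref{thm:vartek} since $x \mapsto \theta x$ is Lipschitz. Fix $\omega \in \Omega_0$ and let $L = L(\omega)$ be a common spatial Lipschitz bound for the two.

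The key step is a local comparison estimate based on finite propagation speed. Since $G$ is coercive and continuous, only the values of $G$ on $[-L, L]$ enter into either of the two Lipschitz solutions, so after replacing $G$ by any Lipschitz extension agreeing with it on $[-L-1, L+1]$ (which leaves $u$ and $u_\theta$ unchanged) the classical finite-propagation-speed estimate for HJ equations with Lipschitz Hamiltonian yields a constant $K = K(L) > 0$ with
\[ |u(t, 0, \omega) - u_\theta(t, 0, \omega)| \le \sup_{|y| \le K t} |f(y, \omega) - \theta y| \qquad \text{for all } t \ge 0. \]
To control the right-hand side, I would use the sublinearity hypothesis: given $\eta > 0$, pick $R_\eta = R_\eta(\omega)$ with $|f(y, \omega) - \theta y| \le \eta |y|$ for $|y| \ge R_\eta$, and absorb the bounded piece on $[-R_\eta, R_\eta]$ (using the uniform Lipschitz bound on $f$ from \eqref{eq:kostas}) into a constant $C_\eta(\omega)$, giving $|f(y, \omega) - \theta y| \le \eta |y| + C_\eta(\omega)$ for every $y \in \mathbb{R}$.

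Plugging this into the comparison bound yields $|u(t, 0, \omega) - u_\theta(t, 0, \omega)| \le \eta K t + C_\eta(\omega)$. Since $u(t, 0, \omega)/t = -\lambda + f(0, \omega)/t \to -\lambda$, dividing by $t$, taking $t \to +\infty$, and then sending $\eta \to 0$ gives $u_\theta(t, 0, \omega)/t \to -\lambda$, i.e., $\Ham^L(\theta) = \Ham^U(\theta) = \lambda$, whence $\Ham(\theta) = \lambda$ by \eqref{eq:pack}. I expect the main obstacle to be justifying the finite-propagation-speed estimate: it is classical for HJ equations with a Lipschitz Hamiltonian (see, e.g., \cite{RT00}), and the mild truncation trick above addresses the fact that $G$ is only assumed continuous, but one must carefully verify that the truncation is transparent to both solutions, which is the usual observation that on the relevant Lipschitz class the viscosity solution depends only on $G|_{\{|p| \le L\}}$.
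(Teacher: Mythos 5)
Your overall strategy (compare the exact solution $-\lambda t+f(x,\omega)$ with $u_\theta$ and exploit sublinearity of $f-\theta x$ inside a cone $|y|\le Kt$) would work, and would be a genuinely different route from the paper's, \emph{if} the finite-propagation-speed estimate were available. But there is a genuine gap in how you justify that estimate. The cone-of-dependence comparison requires the Hamiltonian to be Lipschitz in $p$ on the range of the gradients of the two solutions, i.e.\ on $[-L,L]$, and your truncation trick only modifies $G$ \emph{outside} that range. Under \eqref{eq:coercive}, $G$ is merely continuous, so its restriction to $[-L-1,L+1]$ need not be Lipschitz (think of a square-root-type modulus at some point of $[-L,L]$; even Condition \ref{cond:pm} does not exclude this), and then no Lipschitz function agreeing with $G$ on $[-L-1,L+1]$ exists. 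The estimate itself, with a speed $K$ depending only on $L$ and $G$, genuinely fails in this generality: for $G(p)=\sqrt{|p|}$ (suitably coercified) and $V\equiv$ const, the two linear solutions $u_1\equiv 0$ and $u_2(t,x)=-\eta x-t\sqrt{\eta}$ are $1$-Lipschitz for small $\eta$, yet $|u_1(t,0)-u_2(t,0)|=t\sqrt{\eta}$ while $\sup_{|y|\le Kt}|u_1(0,y)-u_2(0,y)|=\eta K t$, forcing $K\ge \eta^{-1/2}$; so no uniform finite speed exists, and for a given pair of solutions you have offered no argument that the domain of dependence grows only linearly in $t$.

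The paper circumvents exactly this issue by never asking for Lipschitz control of $G$: it perturbs the candidate into strict sub/supersolutions $-(\lambda\pm\epsilon)t+f(x,\omega)\mp\delta\sqrt{1+x^2}\mp K$, where $\epsilon$ is fixed first and $\delta=\delta(\epsilon,\omega)$ is then chosen small using only the uniform continuity of $G$ on compact sets (the spatial correction has slope at most $\delta$, and its effect on $G$ is absorbed into the $\epsilon$ slack in the time slope); sublinearity of $f-\theta x$ is used only to order the initial data after subtracting/adding a large constant, and then the global comparison principle, Proposition \ref{prop:comprin}, finishes the argument, with $\epsilon\to0$ at the end. Your proof becomes correct if you either add a local Lipschitz hypothesis on $G$ (not available here) or replace the finite-speed step by this perturbation-plus-global-comparison device; as written, the key comparison estimate is unjustified and can fail under the stated assumptions. (A minor additional point: Theorem \ref{thm:ottur} is not needed for this proposition; the conclusion is exactly the statement $\lim_{t\to+\infty}-u_\theta(t,0,\omega)/t=\lambda$ a.s., via the definitions of $\Ham^L,\Ham^U$ and \eqref{eq:pack}.)
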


\begin{proof}
	Let $\varphi(x) = \sqrt{1+ x^2}$. For every $\epsilon>0$ and $\omega\in\Omega_0$, define $v= v(\cdot,\cdot,\omega)\in\Lip([0,+\infty)\times\mathbb{R})$ by
	\[ v(t,x,\omega) = -(\lambda + \epsilon)t + f(x,\omega) - \delta\varphi(x) - K, \]
	where $\delta,K>0$ are to be determined. We will show that $v$ is a subsolution of \eqref{eq:originalHJ1}. To this end,
	fix any \[ p\in [\partial_x^+v(t,x,\omega),\partial_x^-v(t,x,\omega)] = [f_+'(x,\omega) - \delta\varphi'(x),f_-'(x,\omega) - \delta\varphi'(x)]. \]
	(If this interval is empty, then we are done.) Since \[ p + \delta\varphi'(x) \in [f_+'(x,\omega),f_-'(x,\omega)], \]
	it follows from \eqref{eq:vsol} that
	\begin{align*}
	G(p) + \beta V(x,\omega) &= G(p + \delta\varphi'(x) - \delta\varphi'(x)) + \beta V(x,\omega)\\
	&\le G(p + \delta\varphi'(x)) + \epsilon + \beta V(x,\omega) \le \lambda + \epsilon = -\partial_tv(t,x,\omega)
	\end{align*}
	when $\delta = \delta(\epsilon,\omega) > 0$ is sufficiently small. 
	Therefore, $v$ is indeed a subsolution of \eqref{eq:originalHJ1}. Moreover, 
	for $K = K(\delta,\omega) > 0$ sufficiently large,
	\[ v(0,x,\omega) =  f(x,\omega) - \delta\varphi(x) - K = \theta x + o(|x|) - \delta\varphi(x) - K \le \theta x = u_\theta(0,x,\omega) \]
	for every $x\in\mathbb{R}$. By the comparison principle in Proposition \ref{prop:comprin} (see below),
	\[ \Ham^U(\theta) = \limsup_{t\to +\infty}\frac{-u_\theta(t,0,\omega)}{t} \le \lim_{t\to +\infty}\frac{-v(t,0,\omega)}{t} = \lambda + \epsilon. \]
	Similarly, for every $\epsilon>0$ and $\omega\in\Omega_0$, there exist $\delta,K>0$ such that
	\[ w(t,x,\omega) = -(\lambda - \epsilon)t + f(x,\omega) + \delta\varphi(x) + K \]
	defines a supersolution $w = w(\cdot,\cdot,\omega)\in\Lip([0,+\infty)\times\mathbb{R})$ of \eqref{eq:originalHJ1} that satisfies
	\[ w(0,x,\omega) \ge \theta x = u_\theta(0,x,\omega) \]
	for every $x\in\mathbb{R}$. By the comparison principle in Proposition \ref{prop:comprin} (see below),
	\[ \Ham^L(\theta) = \liminf_{t\to +\infty}\frac{-u_\theta(t,0,\omega)}{t} \ge \lim_{t\to +\infty}\frac{-w(t,0,\omega)}{t} = \lambda - \epsilon. \]
	Since $\epsilon > 0$ is arbitrary, the desired equalities follow.
\end{proof}

\begin{proposition}\label{prop:comprin}
	Assume \eqref{eq:coercive} and \eqref{eq:bucon}. For any $\omega\in\Omega$, if $u_1(\cdot,\cdot,\omega),u_2(\cdot,\cdot,\omega)\in\Lip([0,+\infty)\times\mathbb{R})$ are, respectively, a subsolution and a supersolution of the HJ equation in \eqref{eq:originalHJ1}, then
	\[ u_1(t,x,\omega) - u_2(t,x,\omega) \le \sup\{ u_1(0,y,\omega) - u_2(0,y,\omega):\,y\in\mathbb{R} \}\ \ \text{for every}\ (t,x)\in[0,+\infty)\times\mathbb{R}. \]
\end{proposition}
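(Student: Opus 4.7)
The plan is to run the classical Crandall--Lions doubling-of-variables argument for first-order viscosity solutions. The unbounded spatial domain and the fact that $G$ is merely continuous (not globally Lipschitz) are handled via the global Lipschitz regularity of $u_1$ and $u_2$ guaranteed by Theorem \ref{thm:vartek}, which forces only a bounded range of momenta to be relevant.

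\textbf{Reduction.} Let $M := \sup_{y\in\mathbb{R}}\bigl(u_1(0,y,\omega) - u_2(0,y,\omega)\bigr)$. If $M = +\infty$ there is nothing to prove, so assume $M < +\infty$ and subtract $M$ from $u_1$ to reduce to the case $u_1(0,\cdot,\omega) \le u_2(0,\cdot,\omega)$; I then need to show $u_1 \le u_2$ on $[0,+\infty)\times\mathbb{R}$. Let $L$ be a common Lipschitz bound for $u_1(\cdot,\cdot,\omega)$ and $u_2(\cdot,\cdot,\omega)$. Since the sub- and superdifferentials of $u_1,u_2$ in the $x$-variable necessarily lie in $[-L,L]$, only the values $G|_{[-L,L]}$ matter, and on this compact interval $G$ has a modulus of continuity $\omega_G$; similarly, $V(\cdot,\omega)$ has a modulus of continuity $\omega_V$ by \eqref{eq:bucon}.

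\textbf{Penalization.} Fix any $T > 0$ and suppose for contradiction that $u_1(\hat t, \hat x, \omega) - u_2(\hat t, \hat x, \omega) > 0$ at some $(\hat t, \hat x) \in (0, T]\times\mathbb{R}$. For parameters $\varepsilon, \alpha, \sigma > 0$ I would consider
\[
\Phi(t,x,s,y) := u_1(t,x,\omega) - u_2(s,y,\omega) - \frac{(t-s)^2 + (x-y)^2}{2\varepsilon} - \alpha\bigl((x-\hat x)^2 + (t-\hat t)^2\bigr) - \frac{\sigma}{T-t}.
\]
Because $u_1, u_2$ grow at most linearly in $x$, the quadratic anchor forces $\Phi \to -\infty$ as $|x| + |y| \to \infty$, so $\Phi$ attains its supremum at some $(t_\varepsilon, x_\varepsilon, s_\varepsilon, y_\varepsilon)$. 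For $\alpha, \sigma$ sufficiently small this supremum is still positive, and the maximizers stay in a compact subset of $(0,T)\times\mathbb{R}$ with $t_\varepsilon, s_\varepsilon$ bounded away from $0$ and $T$. The standard estimate
\[
\frac{(t_\varepsilon - s_\varepsilon)^2 + (x_\varepsilon - y_\varepsilon)^2}{\varepsilon} \longrightarrow 0 \quad \text{as } \varepsilon \downarrow 0
\]
then follows from the Lipschitz continuity of $u_1, u_2$ and the boundedness of $\Phi$.

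\textbf{Contradiction.} Writing the subsolution inequality for $u_1$ at $(t_\varepsilon, x_\varepsilon)$ and the supersolution inequality for $u_2$ at $(s_\varepsilon, y_\varepsilon)$ against $\Phi$ and subtracting yields
\[
\frac{\sigma}{(T-t_\varepsilon)^2} + 2\alpha(t_\varepsilon - \hat t) \le G\!\left(\tfrac{x_\varepsilon - y_\varepsilon}{\varepsilon}\right) - G\!\left(\tfrac{x_\varepsilon - y_\varepsilon}{\varepsilon} + 2\alpha(x_\varepsilon - \hat x)\right) + \beta\bigl(V(y_\varepsilon,\omega) - V(x_\varepsilon,\omega)\bigr).
\]
All momenta that appear lie in a bounded interval (by the Lipschitz bound $L$ on $u_1, u_2$), so the right-hand side is controlled by $\omega_G(2\alpha|x_\varepsilon - \hat x|) + \beta\,\omega_V(|x_\varepsilon - y_\varepsilon|)$, which tends to $0$ as $\varepsilon \downarrow 0$ (for fixed $\alpha$) and then as $\alpha \downarrow 0$, while the left-hand side remains at least $\sigma/T^2 > 0$: contradiction. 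Finally, sending $\sigma \downarrow 0$ yields the claim.

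The main obstacle I anticipate is precisely the interaction between the unbounded spatial domain and the merely continuous (not globally Lipschitz) Hamiltonian $G$: the doubling argument has to localize around $(\hat t, \hat x)$ via the anchor $\alpha((x-\hat x)^2 + (t-\hat t)^2)$ while still producing vanishing perturbations of the momenta in the Hamiltonian. This is where the global Lipschitz regularity of $u_1, u_2$ from Theorem \ref{thm:vartek} is indispensable, as it restricts the momenta to a compact interval on which $G$ has a genuine modulus of continuity. Once this reduction is in place, the remainder is routine bookkeeping along the lines of the reference \cite{DZ15} cited for Theorem \ref{thm:vartek}.
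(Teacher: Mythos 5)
Your proof is correct in outline, but it is a genuinely different route from the paper's: the paper does not prove the proposition at all, it simply invokes the more general comparison principle of \cite[Proposition A.2]{DZ15}, whereas you give a self-contained Crandall--Lions doubling-of-variables argument. What your version buys is transparency about exactly which hypotheses are used -- the global Lipschitz bound from Theorem \ref{thm:vartek} confines all momenta to a compact interval where the merely continuous $G$ has a modulus, and \eqref{eq:bucon} supplies the modulus for $V$ -- while the citation buys brevity and covers the uniformly continuous (not necessarily Lipschitz) setting that \cite{DZ15} treats. Three small points of bookkeeping in your sketch deserve care: (i) the term $\omega_G(2\alpha|x_\varepsilon-\hat x|)$ does \emph{not} tend to $0$ as $\varepsilon\downarrow 0$ for fixed $\alpha$; instead, positivity of the penalized supremum together with $u_1-u_2\le 2LT$ on $[0,T]\times\mathbb{R}$ gives $\alpha(x_\varepsilon-\hat x)^2\le 2LT+O(\varepsilon)$, hence the term is bounded by $\omega_G\bigl(2\sqrt{\alpha(2LT+1)}\bigr)$ uniformly in small $\varepsilon$ and vanishes as $\alpha\downarrow 0$ -- same conclusion, but the limits must be taken in that order and with this uniform bound, and likewise the left-hand side is only $\ge \sigma/T^2-2\alpha T$ until $\alpha\downarrow 0$; (ii) take $\hat t<T$ (or enlarge $T$), since your barrier $\sigma/(T-t)$ makes $\Phi=-\infty$ at $t=T$, and note that $|t_\varepsilon-s_\varepsilon|\le 2L\varepsilon$ keeps $s_\varepsilon$ away from both $0$ and $T$ for small $\varepsilon$ so that the viscosity inequalities may be used at both points (the case $t_\varepsilon=0$ or $s_\varepsilon=0$ being excluded by the initial ordering plus Lipschitz continuity); (iii) avoid reusing the letter $M$ for $\sup_y(u_1(0,y,\omega)-u_2(0,y,\omega))$, since $M$ already denotes $G(p_M)$ in the paper. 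None of these affects the validity of the argument.
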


\begin{proof}
	This comparison principle follows from, e.g., the more general one in \cite[Proposition A.2]{DZ15}.
\end{proof}

We will use the following one-sided variant of Proposition \ref{prop:demtut} to obtain the flat pieces of the graph of the effective Hamiltonian.

\begin{proposition}\label{prop:onesided}
	Assume \eqref{eq:coercive} and \eqref{eq:bucon}. Suppose there exist constants $\theta_-,\theta_+\in\mathbb{R}$, a function $f:\mathbb{R}\times\Omega\to\mathbb{R}$ and an event $\Omega_0\in\mathcal{F}$ with $\mathbb{P}(\Omega_0) = 1$ such that \eqref{eq:kostas} holds,
	\[ \lim_{x\to-\infty} \frac{f(x,\omega)}{x} = \theta_- \quad\text{and}\quad \lim_{x\to+\infty} \frac{f(x,\omega)}{x} = \theta_+ \]
	for every $\omega\in\Omega_0$.
	\begin{itemize}
		\item [(a)] If $\theta_+ < \theta_-$ and $f(\cdot,\omega)$ is a subsolution of \eqref{eq:aux}	with some $\lambda \in [\beta,+\infty)$ for every $\omega\in\Omega_0$, then
		\[ \Ham^U(\theta) \le \lambda\ \ \text{for every}\ \theta\in(\theta_+,\theta_-). \]
		\item [(b)] If $\theta_- < \theta_+$ and $f(\cdot,\omega)$ is a supersolution of \eqref{eq:aux} with some $\lambda \in [\beta,+\infty)$ for every $\omega\in\Omega_0$, then
		\[ \Ham^L(\theta) \ge \lambda\ \ \text{for every}\ \theta\in(\theta_-,\theta_+). \]
	\end{itemize}
\end{proposition}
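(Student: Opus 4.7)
The plan is to adapt the proof of Proposition \ref{prop:demtut} by keeping only one of its two comparison arguments. For part (a), define for $\epsilon>0$ the perturbed function
\[ v(t,x,\omega) = -(\lambda + \epsilon)t + f(x,\omega) - \delta\varphi(x) - K,\qquad \varphi(x) = \sqrt{1+x^2}, \]
exactly as in the first half of the proof of Proposition \ref{prop:demtut}. The verification that $v$ is a subsolution of \eqref{eq:originalHJ1} for all $\delta = \delta(\epsilon,\omega) > 0$ sufficiently small is identical to the one there, since it only invokes \eqref{eq:vsol}, the uniform bound on $f_\pm'(\cdot,\omega)$, and the continuity of $G$.

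The one new ingredient is the initial bound $v(0,x,\omega) \le \theta x$ for all $x\in\mathbb{R}$. I will obtain this from the hypothesis $\theta \in (\theta_+,\theta_-)$ by observing that
\[ f(x,\omega) - \theta x = (\theta_+ - \theta)x + o(x)\to -\infty \quad\text{as}\ x\to+\infty \]
(since $\theta_+ - \theta < 0$) and
\[ f(x,\omega) - \theta x = (\theta_- - \theta)x + o(|x|) \to -\infty \quad\text{as}\ x\to-\infty \]
(since $\theta_- - \theta > 0$ and $x<0$). Continuity then makes $x\mapsto f(x,\omega) - \theta x$ bounded above on $\mathbb{R}$, so for $K = K(\delta,\omega)$ sufficiently large we get the required $v(0,x,\omega)\le\theta x = u_\theta(0,x,\omega)$. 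Proposition \ref{prop:comprin} then yields $v(t,0,\omega) \le u_\theta(t,0,\omega)$; dividing by $t$, taking $\limsup$, and sending $\epsilon\downarrow 0$ delivers $\Ham^U(\theta) \le \lambda$.

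Part (b) is the mirror argument, using the perturbed supersolution
\[ w(t,x,\omega) = -(\lambda - \epsilon)t + f(x,\omega) + \delta\varphi(x) + K \]
of \eqref{eq:originalHJ1} from the second half of the proof of Proposition \ref{prop:demtut}. The required initial inequality $w(0,x,\omega)\ge\theta x$ now reduces to $f(\cdot,\omega) - \theta(\cdot)$ being bounded below on $\mathbb{R}$; under $\theta \in (\theta_-,\theta_+)$, the same asymptotic computation shows that both linear coefficients now conspire to push the difference to $+\infty$ at their respective ends. Applying Proposition \ref{prop:comprin}, dividing by $t$, taking $\liminf$, and sending $\epsilon\downarrow 0$ gives $\Ham^L(\theta) \ge \lambda$.

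I do not anticipate any serious obstacle: the subsolution/supersolution verification and the Lipschitz regularity of $f(\cdot,\omega)$ are inherited from Proposition \ref{prop:demtut}. The only genuinely new observation is that the strict inequality $\theta_+ < \theta < \theta_-$ (respectively $\theta_- < \theta < \theta_+$) is precisely what forces $f(x,\omega) - \theta x$ to grow in the correct direction at \emph{both} $\pm\infty$, so that exactly one of the two initial comparisons from Proposition \ref{prop:demtut} survives and yields the corresponding one-sided bound on $\Ham$.
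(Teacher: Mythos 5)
Your proof is correct and follows essentially the same comparison argument as the paper's own proof. The only difference is cosmetic: since $\theta_+<\theta<\theta_-$ already forces $f(x,\omega)-\theta x\to-\infty$ at both ends, the paper dispenses with the $\epsilon$- and $\delta\varphi$-perturbations altogether and compares directly with $v(t,x,\omega)=-\lambda t+f(x,\omega)-K$ (and its mirror for part (b)), obtaining $\Ham^U(\theta)\le\lambda$ in one step; your retained perturbation machinery from Proposition \ref{prop:demtut} is harmless but unnecessary here.
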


\begin{proof}
	Under the conditions that apply to part (a), for any $\theta\in(\theta_+,\theta_-)$,
	it is easy to check that
	\[ v(t,x,\omega) = -\lambda t + f(x,\omega) - K \]
	gives a subsolution of \eqref{eq:originalHJ1} and $v(0,x,\omega) \le \theta x = u_\theta(0,x,\omega)$ when $K = K(\theta,\omega)$ is sufficiently large. By the comparison principle in Proposition \ref{prop:comprin},
	\[ \Ham^U(\theta) = \limsup_{t\to +\infty}\frac{-u_\theta(t,0,\omega)}{t} \le \lim_{t\to +\infty}\frac{-v(t,0,\omega)}{t} = \lambda. \]
	The proof of part (b) is similar.
\end{proof}

It is evident from the proofs above that the conditions in Propositions \ref{prop:demtut} and \ref{prop:onesided} are not optimal (e.g., \eqref{eq:kostas} can be replaced with $f\in\Lip(\mathbb{R})$), but they are sufficient for our setting and purposes. Recall Subsection \ref{sub:prevgenel} and note that, for any $\theta\in\mathbb{R}$ and $f:\mathbb{R}\times\Omega\to\mathbb{R}$ as in Proposition \ref{prop:demtut}, 
\[ (x,\omega)\mapsto f(x,\omega) - \theta x \]
is a sublinear corrector. However, in the rest of the paper, we will not use this terminology because we will directly construct and work with functions as in Propositions \ref{prop:demtut} and \ref{prop:onesided} rather than their sublinearized versions. 

\section{Case I: Weak potential ($\beta \le m + \beta < M$)}\label{sec:weak}
	
\subsection{Strictly decreasing piece involving $G_1$}\label{sub:dec1}

In this subsection, we will simultaneously consider Cases I, II and III of Theorem \ref{thm:velinim}.

For every $\lambda \in [m + \beta,+\infty)$ and $\omega\in\Omega$, construct $f_1^\lambda(\cdot,\omega)\in\Cone(\mathbb{R})$ by setting $f_1^\lambda(0,\omega) = 0$ and
\begin{equation}\label{eq:ezgi1}
	(f_1^\lambda)'(x,\omega) = G_1^{-1}(\lambda - \beta V(x,\omega)) \in [G_1^{-1}(\lambda),G_1^{-1}(\lambda - \beta)] \subset (-\infty,p_m].
\end{equation}
$f_1^\lambda(\cdot,\omega)$ is a (classical) solution of \eqref{eq:aux}. Moreover, the ergodic theorem ensures that, for $\mathbb{P}$-a.e.\ $\omega$,
\[ \lim_{x\to\pm\infty}\frac{f_1^\lambda(x,\omega)}{x} = \lim_{x\to\pm\infty}\frac1{x}\int_0^x G_1^{-1}(\lambda- \beta V(y,\omega)) dy = \mathbb{E}\left[ G_1^{-1}(\lambda - \beta V(0,\omega)) \right] = \theta_1(\lambda) \in (-\infty,p_m) \]
with the definition in \eqref{eq:ddnono}. Therefore,
\[ \Ham(\theta_1(\lambda)) = \Ham^L(\theta_1(\lambda)) = \Ham^U(\theta_1(\lambda)) = \lambda \]
by Proposition \ref{prop:demtut}. Finally, since $\theta_1:\,[m + \beta,+\infty)\to (-\infty,\theta_1(m + \beta)]$ is a strictly decreasing bijection, we deduce that $\Ham$ is strictly decreasing on $(-\infty,\theta_1(m + \beta)]$.

\subsection{Strictly increasing piece involving $G_2$}\label{sub:inc2}

For every $\lambda \in [m + \beta,M]$ and $\omega\in\Omega$, construct $f_2^\lambda(\cdot,\omega)\in\Cone(\mathbb{R})$ by setting $f_2^\lambda(0,\omega) = 0$ and
\begin{equation}\label{eq:ezgi2}
	(f_2^\lambda)'(x,\omega) = G_2^{-1}(\lambda - \beta V(x,\omega)) \in [G_2^{-1}(\lambda - \beta),G_2^{-1}(\lambda)] \subset [p_m,p_M].
\end{equation}
$f_2^\lambda(\cdot,\omega)$ is a (classical) solution of \eqref{eq:aux}. Moreover, the ergodic theorem ensures that, for $\mathbb{P}$-a.e.\ $\omega$,
\[ \lim_{x\to\pm\infty}\frac{f_2^\lambda(x,\omega)}{x} = \lim_{x\to\pm\infty}\frac1{x}\int_0^x G_2^{-1}(\lambda- \beta V(y,\omega)) dy = \mathbb{E}\left[ G_2^{-1}(\lambda - \beta V(0,\omega)) \right] =\theta_2(\lambda) \in (p_m,p_M) \]
with the definition in \eqref{eq:ddnono}. Therefore,
\[ \Ham(\theta_2(\lambda)) = \Ham^L(\theta_2(\lambda)) = \Ham^U(\theta_2(\lambda)) = \lambda \]
by Proposition \ref{prop:demtut}. Finally, since $\theta_2:\,[m + \beta,M]\to [\theta_2(m + \beta),\theta_2(M)]$ is a strictly increasing bijection, we deduce that $\Ham$ is strictly increasing on $[\theta_2(m + \beta),\theta_2(M)]$.

\subsection{Strictly decreasing piece involving $G_3$}\label{sub:dec3}

In this subsection, we will simultaneously consider Cases I and II of Theorem \ref{thm:velinim}.

Assume that $\beta < M$. For every $\lambda \in [\beta, M]$ and $\omega\in\Omega$, construct $f_3^\lambda(\cdot,\omega)\in\Cone(\mathbb{R})$ by setting $f_3^\lambda(0,\omega) = 0$ and
\begin{equation}\label{eq:ezgi3}
	(f_3^\lambda)'(x,\omega) = G_3^{-1}(\lambda - \beta V(x,\omega)) \in [G_3^{-1}(\lambda),G_3^{-1}(\lambda - \beta)] \subset [p_M,0].
\end{equation}
$f_3^\lambda(\cdot,\omega)$ is a (classical) solution of \eqref{eq:aux}. Moreover, the ergodic theorem ensures that, for $\mathbb{P}$-a.e.\ $\omega$,
\[ \lim_{x\to\pm\infty}\frac{f_3^\lambda(x,\omega)}{x} = \lim_{x\to\pm\infty}\frac1{x}\int_0^x G_3^{-1}(\lambda - \beta V(y,\omega)) dy = \mathbb{E}\left[ G_3^{-1}(\lambda - \beta V(0,\omega)) \right] = \theta_3(\lambda) \in (p_M,0) \]
with the definition in \eqref{eq:ddnono}. Therefore,
\[ \Ham(\theta_3(\lambda)) = \Ham^L(\theta_3(\lambda)) = \Ham^U(\theta_3(\lambda)) = \lambda \]
by Proposition \ref{prop:demtut}. Finally, since $\theta_3:\,[\beta, M]\to [\theta_3(M),\theta_3(\beta)]$ is a strictly decreasing bijection, we deduce that $\Ham$ is strictly decreasing on $[\theta_3(M),\theta_3(\beta)]$.

\subsection{Strictly increasing piece involving $G_4$}\label{sub:inc4}

In this subsection, we will simultaneously consider Cases I, II and III of Theorem \ref{thm:velinim}.

For every $\lambda \in [\beta,+\infty)$ and $\omega\in\Omega$, construct $f_4^\lambda(\cdot,\omega)\in\Cone(\mathbb{R})$ by setting $f_4^\lambda(0,\omega) = 0$ and
\begin{equation}\label{eq:ezgi4}
	(f_4^\lambda)'(x,\omega) = G_4^{-1}(\lambda - \beta V(x,\omega)) \in [G_4^{-1}(\lambda - \beta),G_4^{-1}(\lambda)] \subset [0,+\infty).
\end{equation}
$f_4^\lambda(\cdot,\omega)$ is a (classical) solution of \eqref{eq:aux}. Moreover, the ergodic theorem ensures that, for $\mathbb{P}$-a.e.\ $\omega$,
\[ \lim_{x\to\pm\infty}\frac{f_4^\lambda(x,\omega)}{x} = \lim_{x\to\pm\infty}\frac1{x}\int_0^x G_4^{-1}(\lambda - \beta V(y,\omega)) dy = \mathbb{E}\left[ G_4^{-1}(\lambda - \beta V(0,\omega)) \right] = \theta_4(\lambda) \in (0,+\infty) \]
with the definition in \eqref{eq:ddnono}. Therefore,
\[ \Ham(\theta_4(\lambda)) = \Ham^L(\theta_4(\lambda)) = \Ham^U(\theta_4(\lambda)) = \lambda \]
by Proposition \ref{prop:demtut}. Finally, since $\theta_4:\,[\beta,+\infty)\to [\theta_4(\beta),+\infty)$ is a strictly increasing bijection, we deduce that $\Ham$ is strictly increasing on $[\theta_4(\beta),+\infty)$.

\subsection{Flat piece at height $\beta$}\label{sub:flatbeta}

In this subsection, we will simultaneously consider Cases I and II of Theorem \ref{thm:velinim}.

Assume that $\beta < M$. For every $\epsilon\in(0,\beta)$ and $\mathbb{P}$-a.e.\ $\omega$, there exists a $y_0 = y_0(\omega)\in\mathbb{R}$ such that $\beta V(y_0,\omega) = \beta - \epsilon$. Construct $f_{4,3}^\beta(\cdot,\omega),f_{3,4}^\beta(\cdot,\omega)\in\Lip(\mathbb{R})$ by setting $f_{4,3}^\beta(0,\omega) = f_{3,4}^\beta(0,\omega) = 0$,
\[ (f_{4,3}^\beta)'(x,\omega) = \begin{cases} (f_4^\beta)'(x,\omega)&\text{if}\ x < y_0,\\ (f_3^\beta)'(x,\omega)&\text{if}\ x > y_0\end{cases} \quad\text{and}\quad (f_{3,4}^\beta)'(x,\omega) = \begin{cases} (f_3^\beta)'(x,\omega)&\text{if}\ x < y_0,\\ (f_4^\beta)'(x,\omega)&\text{if}\ x > y_0\end{cases} \]
with the notation in \eqref{eq:ezgi3} and \eqref{eq:ezgi4}. Observe that, at every $x\in\mathbb{R}\setminus\{y_0\}$, the functions $f_{4,3}^\beta(\cdot,\omega)$ and $f_{3,4}^\beta(\cdot,\omega)$ satisfy both \eqref{eq:vsol} and \eqref{eq:vsag} with $\lambda = \beta$. In addition,
\begin{align*}
(f_{4,3}^\beta)_+'(y_0,\omega) &= (f_{3,4}^\beta)_-'(y_0,\omega) = (f_3^\beta)'(y_0,\omega) = G_3^{-1}(\beta - \beta V(y_0,\omega)) = G_3^{-1}(\epsilon) < 0\quad\text{and}\\
(f_{4,3}^\beta)_-'(y_0,\omega) &= (f_{3,4}^\beta)_+'(y_0,\omega) = (f_4^\beta)'(y_0,\omega) = G_4^{-1}(\beta - \beta V(y_0,\omega)) = G_4^{-1}(\epsilon) > 0.
\end{align*}
It is clear from Condition \ref{cond:pm} that
\[ \beta - \epsilon \le G(p) + \beta V(y_0,\omega) = G(p) + \beta - \epsilon \le \beta \]
for every $p\in [G_3^{-1}(\epsilon),G_4^{-1}(\epsilon)],$
i.e., $f_{4,3}^\beta(\cdot,\omega)$ satisfies \eqref{eq:vsol} at $x = y_0$ with $\lambda = \beta$ and $f_{3,4}^\beta(\cdot,\omega)$ satisfies \eqref{eq:vsag} at $x = y_0$ with $\lambda = \beta - \epsilon$. We deduce that $f_{4,3}^\beta(\cdot,\omega)$ is a subsolution of \eqref{eq:aux} with $\lambda = \beta$ and $f_{3,4}^\beta(\cdot,\omega)$ is a supersolution of \eqref{eq:aux} with $\lambda = \beta - \epsilon$. Moreover,
\[ \lim_{x\to+\infty} \frac{f_{4,3}^\beta(x,\omega)}{x} = \lim_{x\to-\infty} \frac{f_{3,4}^\beta(x,\omega)}{x} = \theta_3(\beta) < 0 < \theta_4(\beta) = \lim_{x\to-\infty} \frac{f_{4,3}^\beta(x,\omega)}{x} = \lim_{x\to+\infty} \frac{f_{3,4}^\beta(x,\omega)}{x} \]
for $\mathbb{P}$-a.e.\ $\omega$. Therefore,
\[ \Ham^U(\theta) \le \beta \ \ \text{and}\ \  \Ham^L(\theta) \ge \beta - \epsilon\ \ \text{for every}\ \theta\in(\theta_3(\beta),\theta_4(\beta)) \]
by Proposition \ref{prop:onesided}. Since $\epsilon\in(0,\beta)$ is arbitrary, we conclude that
\[ \Ham(\theta) = \Ham^L(\theta) = \Ham^U(\theta) = \beta\ \ \text{for every}\ \theta\in(\theta_3(\beta),\theta_4(\beta)). \]

\subsection{Flat piece at height $m + \beta$}

For every $\epsilon\in(0,\beta)$ and $\mathbb{P}$-a.e.\ $\omega$, there exists a $y_0 = y_0(\omega)\in\mathbb{R}$ such that $\beta V(y_0,\omega) = \beta - \epsilon$. Construct $f_{2,1}^{m + \beta}(\cdot,\omega),f_{1,2}^{m + \beta}(\cdot,\omega)\in\Lip(\mathbb{R})$ by setting $f_{2,1}^{m + \beta}(0,\omega) = f_{1,2}^{m + \beta}(0,\omega) = 0$, 
\[ (f_{2,1}^{m + \beta})'(x,\omega) = \begin{cases} (f_2^{m + \beta})'(x,\omega)&\text{if}\ x < y_0,\\ (f_1^{m + \beta})'(x,\omega)&\text{if}\ x > y_0\end{cases} \quad\text{and}\quad (f_{1,2}^{m + \beta})'(x,\omega) = \begin{cases} (f_1^{m + \beta})'(x,\omega)&\text{if}\ x < y_0,\\ (f_2^{m + \beta})'(x,\omega)&\text{if}\ x > y_0\end{cases} \]
with the notation in \eqref{eq:ezgi1} and \eqref{eq:ezgi2}. Observe that, at every $x\in\mathbb{R}\setminus\{y_0\}$, the functions $f_{2,1}^{m + \beta}(\cdot,\omega)$ and $f_{1,2}^{m + \beta}(\cdot,\omega)$ satisfy both \eqref{eq:vsol} and \eqref{eq:vsag} with $\lambda = m + \beta$. In addition,
\begin{align*}
(f_{2,1}^{m + \beta})_+'(y_0,\omega) = (f_{1,2}^{m + \beta})_-'(y_0,\omega) = (f_1^{m + \beta})'(y_0,\omega) &= G_1^{-1}(m + \beta - \beta V(y_0,\omega))\\
&= G_1^{-1}(m + \epsilon) < p_m\quad\text{and}\\
(f_{2,1}^{m + \beta})_-'(y_0,\omega) = (f_{1,2}^{m + \beta})_+'(y_0,\omega) = (f_2^{m + \beta})'(y_0,\omega) &= G_2^{-1}(m + \beta - \beta V(y_0,\omega))\\
&= G_2^{-1}(m + \epsilon) > p_m.
\end{align*}
It is clear from Condition \ref{cond:pm} that
\[ m + \beta - \epsilon \le G(p) + \beta V(y_0,\omega) = G(p) + \beta - \epsilon \le m + \beta \]
for every $p\in [G_1^{-1}(m + \epsilon),G_2^{-1}(m + \epsilon)],$
i.e., $f_{2,1}^{m + \beta}(\cdot,\omega)$ satisfies \eqref{eq:vsol} at $x = y_0$ with $\lambda = m + \beta$ and $f_{1,2}^{m + \beta}(\cdot,\omega)$ satisfies \eqref{eq:vsag} at $x = y_0$ with $\lambda = m + \beta - \epsilon$. We deduce that $f_{2,1}^{m + \beta}(\cdot,\omega)$ is a subsolution of \eqref{eq:aux} with  $\lambda = m + \beta$ and $f_{1,2}^{m + \beta}(\cdot,\omega)$ is a supersolution of \eqref{eq:aux} with $\lambda = m + \beta - \epsilon$. Moreover,
\begin{align*}
	\lim_{x\to+\infty} \frac{f_{2,1}^{m + \beta}(x,\omega)}{x} &= \lim_{x\to-\infty} \frac{f_{1,2}^{m + \beta}(x,\omega)}{x} = \theta_1(m + \beta) < p_m\\
	&< \theta_2(m + \beta) = \lim_{x\to-\infty} \frac{f_{2,1}^{m + \beta}(x,\omega)}{x} = \lim_{x\to+\infty} \frac{f_{1,2}^{m + \beta}(x,\omega)}{x}
\end{align*}
for $\mathbb{P}$-a.e.\ $\omega$. Therefore,
\[ \Ham^U(\theta) \le m + \beta \ \ \text{and}\ \  \Ham^L(\theta) \ge m + \beta - \epsilon\ \ \text{for every}\ \theta\in(\theta_1(m + \beta),\theta_2(m + \beta)) \]
by Proposition \ref{prop:onesided}. Since $\epsilon\in(0,\beta)$ is arbitrary, we conclude that
\[ \Ham(\theta) = \Ham^L(\theta) = \Ham^U(\theta) = m + \beta\ \ \text{for every}\ \theta\in(\theta_1(m + \beta),\theta_2(m + \beta)). \]

\subsection{Flat piece at height $M$}

For every $\epsilon\in(0,\beta)$ and $\mathbb{P}$-a.e.\ $\omega$, there exists an $x_0 = x_0(\omega)\in\mathbb{R}$ such that $\beta V(x_0,\omega) = \epsilon$. Construct $f_{3,2}^{M}(\cdot,\omega),f_{2,3}^{M}(\cdot,\omega)\in\Lip(\mathbb{R})$ by setting $f_{3,2}^{M}(0,\omega) = f_{2,3}^{M}(0,\omega) = 0$,
\[ (f_{3,2}^{M})'(x,\omega) = \begin{cases} (f_3^{M})'(x,\omega)&\text{if}\ x < x_0,\\ (f_2^{M})'(x,\omega)&\text{if}\ x > x_0\end{cases} \quad\text{and}\quad (f_{2,3}^{M})'(x,\omega) = \begin{cases} (f_2^{M})'(x,\omega)&\text{if}\ x < x_0,\\ (f_3^{M})'(x,\omega)&\text{if}\ x > x_0\end{cases} \]
with the notation in \eqref{eq:ezgi2} and \eqref{eq:ezgi3}. Observe that, at every $x\in\mathbb{R}\setminus\{x_0\}$, the functions $f_{3,2}^{M}(\cdot,\omega)$ and $f_{2,3}^{M}(\cdot,\omega)$ satisfy both \eqref{eq:vsol} and \eqref{eq:vsag} with $\lambda = M$. In addition,
\begin{align*}
(f_{3,2}^{M})_+'(x_0,\omega) = (f_{2,3}^{M})_-'(x_0,\omega) = (f_2^{M})'(x_0,\omega) &= G_2^{-1}(M - \beta V(x_0,\omega)) = G_2^{-1}(M - \epsilon) < p_M\quad\text{and}\\
(f_{3,2}^{M})_-'(x_0,\omega) = (f_{2,3}^{M})_+'(x_0,\omega) = (f_3^{M})'(x_0,\omega) &= G_3^{-1}(M - \beta V(x_0,\omega)) = G_3^{-1}(M - \epsilon) > p_M.
\end{align*}
It is clear from Condition \ref{cond:pm} that
\[ M \le G(p) + \beta V(x_0,\omega) = G(p) + \epsilon \le M + \epsilon \]
for every $p\in [G_2^{-1}(M - \epsilon),G_3^{-1}(M - \epsilon)],$
i.e., $f_{3,2}^{M}(\cdot,\omega)$ satisfies \eqref{eq:vsol} at $x = x_0$ with $\lambda = M + \epsilon$ and $f_{2,3}^{M}(\cdot,\omega)$ satisfies \eqref{eq:vsag} at $x = x_0$ with $\lambda = M$. We deduce that $f_{3,2}^{M}(\cdot,\omega)$ is a subsolution of \eqref{eq:aux} with $\lambda = M + \epsilon$ and $f_{2,3}^{M}(\cdot,\omega)$ is a supersolution of \eqref{eq:aux} with $\lambda = M$.
Moreover,
\[ \lim_{x\to+\infty} \frac{f_{3,2}^M(x,\omega)}{x} = \lim_{x\to-\infty} \frac{f_{2,3}^M(x,\omega)}{x} = \theta_2(M) < p_M < \theta_3(M) = \lim_{x\to-\infty} \frac{f_{3,2}^M(x,\omega)}{x} = \lim_{x\to+\infty} \frac{f_{2,3}^M(x,\omega)}{x} \]
for $\mathbb{P}$-a.e.\ $\omega$. Therefore,
\[ \Ham^U(\theta) \le M + \epsilon \ \ \text{and}\ \  \Ham^L(\theta) \ge M\ \ \text{for every}\ \theta\in(\theta_2(M),\theta_3(M)) \]
by Proposition \ref{prop:onesided}. Since $\epsilon\in(0,\beta)$ is arbitrary, we conclude that
\[ \Ham(\theta) = \Ham^L(\theta) = \Ham^U(\theta) = M\ \ \text{for every}\ \theta\in(\theta_2(M),\theta_3(M)). \]

\section{Case II: Medium potential ($\beta < M \le m + \beta$)}\label{sec:medium}

\subsection{Strictly decreasing piece involving $G_1$}

We proved in Subsection \ref{sub:dec1} that, in Cases I, II and III, for every $\lambda \in [m + \beta,+\infty)$,
\[ \Ham(\theta_1(\lambda)) = \Ham^L(\theta_1(\lambda)) = \Ham^U(\theta_1(\lambda)) = \lambda \]
with the definition in \eqref{eq:ddnono}. We added that $\Ham$ is strictly decreasing on $(-\infty,\theta_1(m + \beta)]$.

\subsection{Nonincreasing piece involving both $G_1$ and $G_3$ (excluding the easy subcase)}\label{sub:ister}

In this subsection, we will simultaneously consider Cases II and III of Theorem \ref{thm:velinim} (except their easy subcases which are deferred to Subsections \ref{sub:medeasy} and \ref{sub:streasy}).

Assume that $\max\{\beta,M\} < m + \beta$. For every $\lambda\in[\beta,+\infty)\cap(M,m + \beta)$, define two bi-infinite sequences $(\underline x_i)_{i\in\mathbb{Z}} = (\underline x_i(\lambda,\omega))_{i\in\mathbb{Z}}$ and $(\overline y_i)_{i\in\mathbb{Z}} = (\overline y_i(\lambda,\omega))_{i\in\mathbb{Z}}$ by the coupled recursion
\begin{equation}\label{eq:noanchor}
\begin{aligned}
\underline x_i(\lambda,\omega) &= \inf\{ x\ge \overline y_{i-1}(\lambda,\omega):\, \lambda - \beta V(x,\omega) \ge M \},\\
\overline y_i(\lambda,\omega) &= \inf\{ x\ge \underline x_i(\lambda,\omega):\, \lambda - \beta V(x,\omega) < m \}
\end{aligned}
\end{equation}
and the following anchoring condition:
\begin{equation}\label{eq:cip1}
\underline x_{-1}(\lambda,\omega) \le 0 < \underline x_0(\lambda,\omega).
\end{equation}
Similarly, define $(\overline x_i)_{i\in\mathbb{Z}} = (\overline x_i(\lambda,\omega))_{i\in\mathbb{Z}}$ and $(\underline y_i)_{i\in\mathbb{Z}} = (\underline y_i(\lambda,\omega))_{i\in\mathbb{Z}}$ by the coupled recursion
\begin{equation}\label{eq:cipayok}
\begin{aligned}
\overline x_i(\lambda,\omega) &= \inf\{ x\ge \underline y_{i-1}(\lambda,\omega):\, \lambda - \beta V(x,\omega) > M \},\\
\underline y_i(\lambda,\omega) &= \inf\{ x\ge \overline x_i(\lambda,\omega):\, \lambda - \beta V(x,\omega) \le m \}
\end{aligned}
\end{equation}
and the following anchoring condition:
\begin{equation}\label{eq:cip2}
\overline x_{-1}(\lambda,\omega) < 0 \le \overline x_0(\lambda,\omega).
\end{equation}
Note that the set
\begin{equation}\label{eq:sopra}
	\Omega_0 := \!\!\!\!\!\! \bigcap_{\lambda\in[\beta,+\infty)\cap(M,m + \beta)} \!\!\!\!\!\! \{ \omega\in\Omega: \underline{x}_i(\lambda,\omega),\overline{y}_i(\lambda,\omega),\overline{x}_i(\lambda,\omega),\underline{y}_i(\lambda,\omega)\in(-\infty,+\infty)\ \text{for every}\ i\in\mathbb{Z} \}
\end{equation}
satisfies $\mathbb{P}(\Omega_0) = 1$ by our ergodicity assumption. For every $\lambda\in[\beta,+\infty)\cap(M,m + \beta)$ and $\omega\in\Omega_0$, it is easy to see that
\begin{equation}\label{eq:ofkeyonet}
\begin{aligned}
&\lambda - \beta V(x,\omega) \ge m \quad\text{if}\ x\in [\underline x_i,\overline y_i)\quad\text{and}\\
&\lambda - \beta V(x,\omega) < M \quad\text{if}\ x\in [\overline y_i,\underline x_{i+1}).
\end{aligned}
\end{equation}
In particular, whenever $\lambda - \beta V(x,\omega) = M$, we know that $x\in[\underline x_i,\overline y_i)$ for some $i\in\mathbb{Z}$. Consequently, $(\underline x_i)_{i\in\mathbb{Z}}$ and $(\overline y_i)_{i\in\mathbb{Z}}$ are well-defined by \eqref{eq:noanchor}--\eqref{eq:cip1}. Similarly,
\begin{equation}\label{eq:ayakbas}
\begin{aligned}
&\lambda - \beta V(x,\omega) > m \quad\text{if}\ x\in [\overline x_i,\underline y_i)\ \ \text{and}\\
&\lambda - \beta V(x,\omega) \le M \quad\text{if}\ x\in [\underline y_i,\overline x_{i+1}).
\end{aligned}
\end{equation}
In particular, whenever $\lambda - \beta V(x,\omega) = m$, we know that $x\in[\underline y_i,\overline x_{i+1})$ for some $i\in\mathbb{Z}$.
Consequently, $(\overline x_i)_{i\in\mathbb{Z}}$ and $(\underline y_i)_{i\in\mathbb{Z}}$ are well-defined by \eqref{eq:cipayok}--\eqref{eq:cip2}. See Figure \ref{fig:V}.

\begin{figure}
	\includegraphics{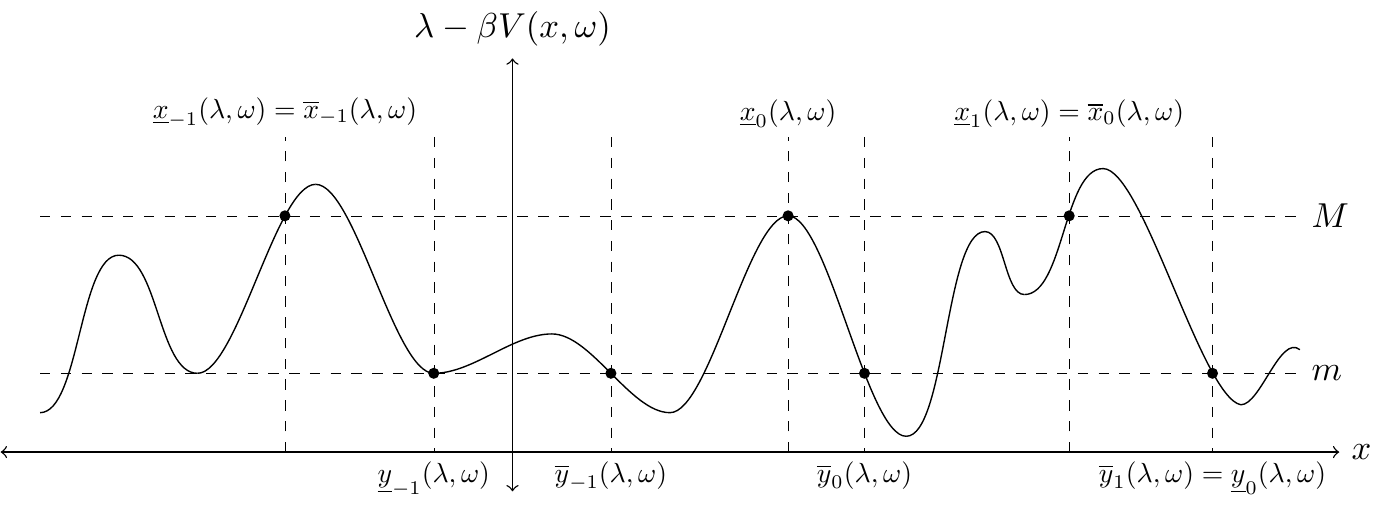}
	\caption{A realization of the two bi-infinite sequences defined by \eqref{eq:noanchor}--\eqref{eq:cip2}.}
	\label{fig:V}
\end{figure}

For every $\lambda\in[\beta,+\infty)\cap(M,m + \beta)$ and $\omega\in\Omega_0$, construct $\underline f_{1,3}^\lambda(\cdot,\omega),\overline f_{1,3}^\lambda(\cdot,\omega)\in\Lip(\mathbb{R})$ by setting $\underline f_{1,3}^\lambda(0,\omega) = \overline f_{1,3}^\lambda(0,\omega) = 0$,
\begin{equation}\label{eq:bizdik}
(\underline f_{1,3}^\lambda)'(x,\omega) = \begin{cases} G_1^{-1}(\lambda - \beta V(x,\omega)) \in (-\infty,p_m]&\text{if}\ x\in(\underline x_i,\overline y_i),\\ G_3^{-1}(\lambda - \beta V(x,\omega)) \in (p_M,0]&\text{if}\ x\in(\overline y_i,\underline x_{i+1})\end{cases}
\end{equation}
and
\begin{equation}\label{eq:anbizdik}
(\overline f_{1,3}^\lambda)'(x,\omega) = \begin{cases} G_1^{-1}(\lambda - \beta V(x,\omega)) \in  (-\infty,p_m)&\text{if}\ x\in(\overline x_i,\underline y_i),\\ G_3^{-1}(\lambda - \beta V(x,\omega)) \in [p_M,0]&\text{if}\ x\in(\underline y_i,\overline x_{i+1}).\end{cases}
\end{equation}
Observe that, at every $x\in\mathbb{R}\setminus\{\ldots,\overline y_{-1},\underline x_0,\overline y_0,\underline x_1,\ldots\}$, the function $\underline f_{1,3}^\lambda(\cdot,\omega)$ satisfies both \eqref{eq:vsol} and \eqref{eq:vsag}. In addition,
\begin{align*}
	&(\underline f_{1,3}^\lambda)_+'(\underline x_i,\omega) = G_1^{-1}(M) < p_m < p_M = G_3^{-1}(M) = (\underline f_{1,3}^\lambda)_-'(\underline x_i,\omega)\quad\text{and}\\
	&(\underline f_{1,3}^\lambda)_-'(\overline y_i,\omega) = G_1^{-1}(m) = p_m < p_M < G_3^{-1}(m) = (\underline f_{1,3}^\lambda)_+'(\overline y_i,\omega).
\end{align*}
It is clear from Condition \ref{cond:pm} that
\[ G(p) + \beta V(\underline x_i,\omega) \le M + \beta V(\underline x_i,\omega) = \lambda \]
for every $p\in [G_1^{-1}(M),G_3^{-1}(M)]$, i.e., $\underline f_{1,3}^\lambda(\cdot,\omega)$ satisfies \eqref{eq:vsol} at $x = \underline x_i$,
and
\[ G(p) + \beta V(\overline y_i,\omega) \ge m + \beta V(\overline y_i,\omega) = \lambda \]
for every $p\in [ G_1^{-1}(m),G_3^{-1}(m)]$, i.e., $\underline f_{1,3}^\lambda(\cdot,\omega)$ satisfies \eqref{eq:vsag} at $x = \overline y_i$. We deduce that $\underline f_{1,3}^\lambda(\cdot,\omega)$ is a solution of \eqref{eq:aux}. Similarly for $\overline f_{1,3}^\lambda(\cdot,\omega)$.

It follows easily from \eqref{eq:stationary}, \eqref{eq:noanchor}, \eqref{eq:cipayok}, \eqref{eq:bizdik} and \eqref{eq:anbizdik} that
\[ (\underline f_{1,3}^\lambda)_\pm'(x,\omega) = (\underline f_{1,3}^\lambda)_\pm'(0,\tau_x\omega)\quad\text{and}\quad (\overline f_{1,3}^\lambda)_\pm'(x,\omega) = (\overline f_{1,3}^\lambda)_\pm'(0,\tau_x\omega) \]
for every $(x,\omega)\in\mathbb{R}\times\Omega_0$. By the ergodic theorem,
\begin{align}
	\underline\theta_{1,3}(\lambda) &:= \lim_{x\to\pm\infty}\frac{\underline f_{1,3}^\lambda(x,\omega)}{x} = \lim_{x\to\pm\infty}\frac1{x}\int_0^x(\underline f_{1,3}^\lambda)'(y,\omega)dy = \mathbb{E}\left[(\underline f_{1,3}^\lambda)'(0,\omega)\right]\quad\text{and}\label{eq:hurma1}\\
	\overline\theta_{1,3}(\lambda) &:= \lim_{x\to\pm\infty}\frac{\overline f_{1,3}^\lambda(x,\omega)}{x} = \lim_{x\to\pm\infty}\frac1{x}\int_0^x(\overline f_{1,3}^\lambda)'(y,\omega)dy = \mathbb{E}\left[(\overline f_{1,3}^\lambda)'(0,\omega)\right]\label{eq:hurma2}
\end{align}
for $\mathbb{P}$-a.e.\ $\omega$. Therefore,
\begin{equation}\label{eq:sflow}
\begin{aligned}
	\Ham(\underline\theta_{1,3}(\lambda)) &= \Ham^L(\underline\theta_{1,3}(\lambda)) = \Ham^U(\underline\theta_{1,3}(\lambda)) = \lambda\quad\text{and}\\
	\Ham(\overline\theta_{1,3}(\lambda)) &= \Ham^L(\overline\theta_{1,3}(\lambda)) = \Ham^U(\overline\theta_{1,3}(\lambda)) = \lambda
\end{aligned}
\end{equation}
by Proposition \ref{prop:demtut}.

Examining the definitions of $(\underline x_i(\lambda,\omega))_{i\in\mathbb{Z}}$, $(\overline x_i(\lambda,\omega))_{i\in\mathbb{Z}}$, $(\underline y_i(\lambda,\omega))_{i\in\mathbb{Z}}$ and $(\overline y_i(\lambda,\omega))_{i\in\mathbb{Z}}$ reveals that they are not interlaced (i.e., $\underline x_i(\lambda,\omega) \le \overline x_i(\lambda,\omega) \le \underline y_i(\lambda,\omega) \le \overline y_i(\lambda,\omega) \le \underline x_{i+1}(\lambda,\omega)$ does not hold) in general. (See Figure \ref{fig:V} for an example.) However, they satisfy the following weaker property.

\begin{lemma}\label{lem:interpm}
	For every $\lambda\in [\beta,+\infty)\cap(M,m + \beta)$, $i\in\mathbb{Z}$ and $\omega\in\Omega_0$, there exist $j,k\in\mathbb{Z}$ such that
	\[ [\overline x_i(\lambda,\omega),\underline y_i(\lambda,\omega)) \subset [\underline x_j(\lambda,\omega),\overline y_j(\lambda,\omega)) \ \text{and}\ \  [\overline y_i(\lambda,\omega),\underline x_{i+1}(\lambda,\omega)) \subset [\underline y_k(\lambda,\omega),\overline x_{k+1}(\lambda,\omega)). \]
\end{lemma}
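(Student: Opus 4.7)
The plan is to carry out the two inclusions by the same contradiction template, and to reduce everything to four continuity identities at the boundary points of the four sequences. Write $W(x):=\lambda-\beta V(x,\omega)$, which is continuous by \eqref{eq:bucon}.

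The first step would be to establish
\[
W(\underline x_i)=W(\overline x_i)=M\quad\text{and}\quad W(\overline y_i)=W(\underline y_i)=m
\]
for every $i\in\mathbb{Z}$ and $\omega\in\Omega_0$. For the ``closed'' cases (e.g., $\underline x_i$, the infimum of a closed set containing it), one inequality is automatic and the reverse comes from continuity applied to points just to the left, where the defining inequality fails. For the ``open'' cases (e.g., $\overline x_i$, the infimum of $\{x\ge\underline y_{i-1}:\,W(x)>M\}$), the value at the infimum satisfies $W(\overline x_i)\le M$ by left continuity (points in $[\underline y_{i-1},\overline x_i)$ have $W\le M$) and $W(\overline x_i)\ge M$ by taking a sequence $x_n\downarrow\overline x_i$ with $W(x_n)>M$; together these squeeze the value to $M$.

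Given these identities, I would handle $[\overline x_i,\underline y_i)\subset[\underline x_j,\overline y_j)$ by choosing $j$ to be the largest integer with $\underline x_j\le\overline x_i$, which exists because $(\underline x_n)_{n\in\mathbb{Z}}$ is strictly increasing and divergent on $\Omega_0$. If $\overline y_j\le\overline x_i$, then $\overline x_i\in[\overline y_j,\underline x_{j+1})$, so \eqref{eq:ofkeyonet} forces $W(\overline x_i)<M$, contradicting the first step. Hence $\overline x_i<\overline y_j$. Moreover, $W(\overline y_j)=m$ witnesses $W(\overline y_j)\le m$, so $\overline y_j$ is a candidate for the infimum defining $\underline y_i$, yielding $\underline y_i\le\overline y_j$ and the desired inclusion.

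The second inclusion $[\overline y_i,\underline x_{i+1})\subset[\underline y_k,\overline x_{k+1})$ would be proved by the same template with the roles of the two pairs of sequences swapped: let $k$ be the largest integer with $\underline y_k\le\overline y_i$; if $\overline x_{k+1}\le\overline y_i$, then $\overline y_i\in[\overline x_{k+1},\underline y_{k+1})$ and \eqref{eq:ayakbas} gives $W(\overline y_i)>m$, contradicting the first step; and if $\overline x_{k+1}<\underline x_{i+1}$, then $\overline x_{k+1}\in[\overline y_i,\underline x_{i+1})$ and \eqref{eq:ofkeyonet} gives $W(\overline x_{k+1})<M$, again contradicting the first step. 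I expect the only subtle point of the entire argument to be the ``open'' part of the first step, where one must rule out that the strict defining inequality persists at the infimum itself; once those continuity identities are in hand, the two inclusions are pure bookkeeping driven by the extremal choice of $j$ and $k$.
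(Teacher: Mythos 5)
Your proposal is correct and follows essentially the same route as the paper: establish via continuity that $\lambda-\beta V(\overline x_i(\lambda,\omega),\omega)=M$ and $\lambda-\beta V(\overline y_i(\lambda,\omega),\omega)=m$, locate $\overline x_i$ in some $[\underline x_j,\overline y_j)$ (your maximal-index-plus-contradiction step is just an unpacked version of the paper's appeal to the dichotomy \eqref{eq:ofkeyonet}), and then use $\lambda-\beta V(\overline y_j,\omega)\le m$ to force $\underline y_i\le\overline y_j$, with the symmetric argument for the second inclusion. The only cosmetic difference is that the paper invokes the tiling consequence of \eqref{eq:ofkeyonet}--\eqref{eq:ayakbas} directly, while you rederive it on the spot.
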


\begin{proof}
	For every $\lambda\in [\beta,+\infty)\cap(M,m + \beta)$, $i\in\mathbb{Z}$ and $\omega\in\Omega_0$,
	\[ \lambda - \beta V(\overline x_i(\lambda,\omega),\omega) = M. \]
	Therefore, $\overline x_i(\lambda,\omega)\in [\underline x_j(\lambda,\omega),\overline y_j(\lambda,\omega))$ for some $j\in\mathbb{Z}$. Since
	\[ \lambda - \beta V(\overline y_j(\lambda,\omega),\omega) = m, \]
	we deduce that
	\[ \underline y_i(\lambda,\omega) = \inf\{ x\ge \overline x_i(\lambda,\omega):\, \lambda - \beta V(x,\omega) \le m \} \le \overline y_j(\lambda,\omega). \]
	This proves the first set inclusion. The second set inclusion is proved similarly.
\end{proof}

\begin{lemma}\label{lem:basic}
	$\underline \theta_{1,3}(\lambda) \le \overline \theta_{1,3}(\lambda)$ for every $\lambda\in [\beta,+\infty)\cap(M,m + \beta)$.
\end{lemma}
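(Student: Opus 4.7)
The plan is to upgrade the desired scalar inequality $\underline\theta_{1,3}(\lambda) \le \overline\theta_{1,3}(\lambda)$ to a pointwise comparison of the derivatives, namely $(\underline f_{1,3}^\lambda)'(x,\omega) \le (\overline f_{1,3}^\lambda)'(x,\omega)$ for Lebesgue-a.e.\ $x \in \mathbb{R}$ and every $\omega$ in the full-measure event $\Omega_0$ of \eqref{eq:sopra}, and then to integrate from $0$ to $x$ and pass to the limit $x \to +\infty$ using the ergodic-theorem identities already recorded in \eqref{eq:hurma1}--\eqref{eq:hurma2}.

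For the pointwise inequality, I would work at any point $x$ outside the countable set $\{\underline x_i(\lambda,\omega), \overline y_i(\lambda,\omega), \overline x_i(\lambda,\omega), \underline y_i(\lambda,\omega)\}_{i \in \mathbb{Z}}$ and carry out a case split according to which of the two branches, $G_1^{-1}$ or $G_3^{-1}$, is selected by each function at $x$ in \eqref{eq:bizdik} and \eqref{eq:anbizdik}. Since $G_1^{-1}(\cdot) \le p_m < p_M \le G_3^{-1}(\cdot)$ on their respective (overlapping) domains, the inequality is automatic whenever both functions pick the same branch, and it even holds with a margin of at least $p_M - p_m$ when $\underline f_{1,3}^\lambda$ uses $G_1^{-1}$ while $\overline f_{1,3}^\lambda$ uses $G_3^{-1}$. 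The only potentially dangerous configuration is the reverse: $\underline f_{1,3}^\lambda$ using $G_3^{-1}$ and $\overline f_{1,3}^\lambda$ using $G_1^{-1}$. This would require $x \in (\overline x_i, \underline y_i)$ for some $i$ but $x \notin (\underline x_j, \overline y_j)$ for any $j$, which is ruled out by the first inclusion in Lemma \ref{lem:interpm}.

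Once the pointwise inequality is established on a set of full Lebesgue measure in $\mathbb{R}$ for each $\omega$ in a common full-measure event, integrating gives
\[
\int_0^x (\underline f_{1,3}^\lambda)'(y,\omega)\,dy \le \int_0^x (\overline f_{1,3}^\lambda)'(y,\omega)\,dy
\]
for every $x \in \mathbb{R}$, and dividing by $x$ and sending $x \to +\infty$ concludes the proof via \eqref{eq:hurma1}--\eqref{eq:hurma2}. I do not expect any genuine obstacle; the only care needed is in the case analysis itself, specifically in using \eqref{eq:ofkeyonet}--\eqref{eq:ayakbas} together with the strict-versus-nonstrict definitions of the four sequences to ensure that the branch each function chooses at a generic $x$ is unambiguously determined, and then applying Lemma \ref{lem:interpm} to the $(\overline x_i, \underline y_i)$ intervals. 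The second inclusion in Lemma \ref{lem:interpm} plays a symmetric (but not strictly required) role, ruling out a dual bad configuration on the $G_3^{-1}$ branches and thus reinforcing the same conclusion.
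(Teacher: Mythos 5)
Your proposal is correct and follows essentially the same route as the paper: an a.e.\ pointwise comparison $(\underline f_{1,3}^\lambda)'(x,\omega)\le(\overline f_{1,3}^\lambda)'(x,\omega)$ justified via the branch structure and Lemma \ref{lem:interpm}, then integration and the limits \eqref{eq:hurma1}--\eqref{eq:hurma2}. The only cosmetic difference is that you exclude the bad branch combination with the first inclusion of Lemma \ref{lem:interpm}, whereas the paper invokes the second inclusion to get outright equality on the intervals $(\overline y_i,\underline x_{i+1})$; both are equally valid.
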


\begin{proof}
	For every $\lambda\in [\beta,+\infty)\cap(M,m + \beta)$ and $\omega\in\Omega_0$, on any interval of the form $(\underline x_i(\lambda,\omega),\overline y_i(\lambda,\omega))$,
	\[ (\underline f_{1,3}^\lambda)'(x,\omega) = G_1^{-1}(\lambda - \beta V(x,\omega)) \le (\overline f_{1,3}^\lambda)'(x,\omega). \]
	Moreover, on any interval of the form $(\overline y_i(\lambda,\omega),\underline x_{i+1}(\lambda,\omega))$,
	\[ (\underline f_{1,3}^\lambda)'(x,\omega) = G_3^{-1}(\lambda - \beta V(x,\omega)) = (\overline f_{1,3}^\lambda)'(x,\omega). \]
	The last equality follows from Lemma \ref{lem:interpm}. In short, $(\underline f_{1,3}^\lambda)'(x,\omega) \le (\overline f_{1,3}^\lambda)'(x,\omega)$ for a.e.\ $x\in\mathbb{R}$ (with respect to Lebesgue measure). The desired inequality is now evident from \eqref{eq:hurma1} and \eqref{eq:hurma2}.
\end{proof}

\begin{lemma}\label{lem:ciftkas}
	For every $\lambda_1,\lambda_2\in [\beta,+\infty)\cap(M,m + \beta)$, if $\lambda_1<\lambda_2$, then
	\[ \underline \theta_{1,3}(\lambda_1) > \overline \theta_{1,3}(\lambda_2).\]
\end{lemma}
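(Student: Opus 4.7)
The plan is to reduce the strict inequality $\underline\theta_{1,3}(\lambda_1)>\overline\theta_{1,3}(\lambda_2)$ to a strict pointwise comparison of the two slopes. Using \eqref{eq:hurma1}--\eqref{eq:hurma2},
\[ \underline\theta_{1,3}(\lambda_1) - \overline\theta_{1,3}(\lambda_2) = \mathbb{E}\bigl[(\underline f_{1,3}^{\lambda_1})'(0,\omega)\bigr] - \mathbb{E}\bigl[(\overline f_{1,3}^{\lambda_2})'(0,\omega)\bigr], \]
so, combined with the stationarity of the slope functions, it will suffice to establish
\[ (\underline f_{1,3}^{\lambda_1})'(x,\omega) > (\overline f_{1,3}^{\lambda_2})'(x,\omega) \]
for every $\omega\in\Omega_0$ and Lebesgue-a.e.\ $x\in\mathbb{R}$.

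To this end, I would split $\mathbb{R}$ into four measurable pieces according to which branch ($G_1^{-1}$ or $G_3^{-1}$) each of $\underline f_{1,3}^{\lambda_1}(\cdot,\omega)$ and $\overline f_{1,3}^{\lambda_2}(\cdot,\omega)$ uses at $x$, cf.\ \eqref{eq:bizdik}--\eqref{eq:anbizdik}. Three of the four combinations yield the inequality directly. If both functions use $G_1^{-1}$ (or both use $G_3^{-1}$) at $x$, then strict monotonicity of the relevant inverse together with $\lambda_1<\lambda_2$ gives the inequality. If $\underline f_{1,3}^{\lambda_1}$ uses $G_3^{-1}$ at $x$ while $\overline f_{1,3}^{\lambda_2}$ uses $G_1^{-1}$, then the slope ranges in \eqref{eq:bizdik} and \eqref{eq:anbizdik} immediately give
\[ (\underline f_{1,3}^{\lambda_1})'(x,\omega) > p_M > p_m > (\overline f_{1,3}^{\lambda_2})'(x,\omega). \]

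The main obstacle, and the only substantive step, is ruling out the fourth combination, in which $\underline f_{1,3}^{\lambda_1}$ uses $G_1^{-1}$ at $x$ but $\overline f_{1,3}^{\lambda_2}$ uses $G_3^{-1}$ (where the slope ranges alone point the wrong way). Supposing such an $x$ existed, there would be $i,k\in\mathbb{Z}$ with $x\in(\underline x_i(\lambda_1,\omega),\overline y_i(\lambda_1,\omega))\cap(\underline y_k(\lambda_2,\omega),\overline x_{k+1}(\lambda_2,\omega))$, and my plan is to compare the two left endpoints $\underline x_i(\lambda_1,\omega)$ and $\underline y_k(\lambda_2,\omega)$, deriving a contradiction in each ordering. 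If $\underline y_k(\lambda_2,\omega)\le\underline x_i(\lambda_1,\omega)$, then $\underline x_i(\lambda_1,\omega)\in[\underline y_k(\lambda_2,\omega),\overline x_{k+1}(\lambda_2,\omega))$, on which \eqref{eq:ayakbas} forces $\lambda_2-\beta V\le M$, while the definition of $\underline x_i(\lambda_1,\omega)$ gives $\lambda_1-\beta V(\underline x_i(\lambda_1,\omega),\omega)\ge M$; subtracting yields $\lambda_1\ge\lambda_2$. Otherwise $\underline x_i(\lambda_1,\omega)<\underline y_k(\lambda_2,\omega)$, so $\underline y_k(\lambda_2,\omega)\in[\underline x_i(\lambda_1,\omega),\overline y_i(\lambda_1,\omega))$, on which \eqref{eq:ofkeyonet} forces $\lambda_1-\beta V\ge m$, while the definition of $\underline y_k(\lambda_2,\omega)$ gives $\lambda_2-\beta V(\underline y_k(\lambda_2,\omega),\omega)\le m$; subtracting again yields $\lambda_1\ge\lambda_2$. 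Both cases contradict $\lambda_1<\lambda_2$, so the bad combination cannot occur, the pointwise strict inequality holds a.e., and the lemma follows.
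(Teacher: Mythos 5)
Your proposal is correct and takes essentially the same route as the paper: the same branch-by-branch comparison of the slopes via strict monotonicity of $G_1^{-1}$ and $G_3^{-1}$ in $\lambda$ and the gap $p_m<p_M$ in the mixed case, concluded through \eqref{eq:hurma1}--\eqref{eq:hurma2}. The only difference is organizational: your ruling-out of the bad combination (where $\underline f_{1,3}^{\lambda_1}$ uses $G_1^{-1}$ while $\overline f_{1,3}^{\lambda_2}$ uses $G_3^{-1}$) is precisely the covering statement of Lemma \ref{lem:interlace}, which the paper isolates and proves by the same kind of crossing-point comparison (using \eqref{eq:ofkeyonet}, \eqref{eq:ayakbas} and the values $M$, $m$ attained at $\underline x_i$, $\underline y_k$) and then cites.
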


\begin{proof}
	For every $\omega\in\Omega_0$, on any interval of the form $(\overline y_i(\lambda_1,\omega),\underline x_{i+1}(\lambda_1,\omega))$,
	\[ (\underline f_{1,3}^{\lambda_1})'(x,\omega) = G_3^{-1}(\lambda_1 - \beta V(x,\omega)) > G_3^{-1}(\lambda_2 - \beta V(x,\omega)) \ge (\overline f_{1,3}^{\lambda_2})_\pm'(x,\omega). \]
	Similarly, on any interval of the form $(\overline x_j(\lambda_2,\omega),\underline y_j(\lambda_2,\omega))$,
	\[ (\underline f_{1,3}^{\lambda_1})_\pm'(x,\omega) \ge G_1^{-1}(\lambda_1 - \beta V(x,\omega)) > G_1^{-1}(\lambda_2 - \beta V(x,\omega)) = (\overline f_{1,3}^{\lambda_2})'(x,\omega). \]
	Since the closures of such invervals cover $\mathbb{R}$ by Lemma \ref{lem:interlace} below, the desired inequality is evident from \eqref{eq:hurma1} and \eqref{eq:hurma2}.
\end{proof}

\begin{lemma}\label{lem:interlace}
	For every $\lambda_1,\lambda_2\in [\beta,+\infty)\cap(M,m + \beta)$ and $\omega\in\Omega_0$, if $\lambda_1 < \lambda_2$, then
	\[ \bigcup_{i\in\mathbb{Z}}[\overline y_i(\lambda_1,\omega),\underline x_{i+1}(\lambda_1,\omega)) \cup \bigcup_{j\in\mathbb{Z}}[\overline x_j(\lambda_2,\omega),\underline y_j(\lambda_2,\omega)) = \mathbb{R}. \]
\end{lemma}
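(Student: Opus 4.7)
The plan is to fix $\omega \in \Omega_0$ and an arbitrary $x \in \mathbb{R}$, and show that if $x$ misses every interval of the form $[\overline y_i(\lambda_1,\omega), \underline x_{i+1}(\lambda_1,\omega))$, then it must lie in some interval $[\overline x_j(\lambda_2,\omega), \underline y_j(\lambda_2,\omega))$. For $\omega \in \Omega_0$, the strictly increasing bi-infinite sequence $\cdots < \underline x_i(\lambda_1,\omega) < \overline y_i(\lambda_1,\omega) < \underline x_{i+1}(\lambda_1,\omega) < \cdots$ partitions $\mathbb{R}$ into the two alternating types of half-open intervals, so missing all $[\overline y_i(\lambda_1,\omega), \underline x_{i+1}(\lambda_1,\omega))$ pieces forces $x \in [\underline x_i(\lambda_1,\omega), \overline y_i(\lambda_1,\omega))$ for exactly one $i \in \mathbb{Z}$.

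The key observation is that at $\underline x_i(\lambda_1,\omega)$ the infimum in \eqref{eq:noanchor} is attained: by the continuity of $V$ from \eqref{eq:bucon}, the set $\{y : \lambda_1 - \beta V(y,\omega) \ge M\}$ is closed, so $\lambda_1 - \beta V(\underline x_i(\lambda_1,\omega),\omega) \ge M$. Using $\lambda_2 > \lambda_1$ immediately upgrades this to the strict bound $\lambda_2 - \beta V(\underline x_i(\lambda_1,\omega),\omega) > M$. By the second line of \eqref{eq:ayakbas}, no complementary interval $[\underline y_j(\lambda_2,\omega), \overline x_{j+1}(\lambda_2,\omega))$ can contain a point where $\lambda_2 - \beta V > M$, so $\underline x_i(\lambda_1,\omega) \in [\overline x_{j_0}(\lambda_2,\omega), \underline y_{j_0}(\lambda_2,\omega))$ for some $j_0 \in \mathbb{Z}$.

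Next I compare $x$ with $\underline y_{j_0}(\lambda_2,\omega)$. If $x < \underline y_{j_0}(\lambda_2,\omega)$, then $x \in [\overline x_{j_0}(\lambda_2,\omega), \underline y_{j_0}(\lambda_2,\omega))$ and we are done. Otherwise the point $y := \underline y_{j_0}(\lambda_2,\omega)$ satisfies $\underline x_i(\lambda_1,\omega) \le y \le x < \overline y_i(\lambda_1,\omega)$; but the attained infimum in the definition of $\underline y_{j_0}(\lambda_2,\omega)$ gives $\lambda_2 - \beta V(y,\omega) \le m$, and hence $\lambda_1 - \beta V(y,\omega) \le m - (\lambda_2 - \lambda_1) < m$. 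This contradicts the defining property of $\overline y_i(\lambda_1,\omega)$ as the infimum of $\{z \ge \underline x_i(\lambda_1,\omega) : \lambda_1 - \beta V(z,\omega) < m\}$, since $y < \overline y_i(\lambda_1,\omega)$ would already lie in that set.

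The main subtle point, and really the whole mechanism of the proof, is this use of the strict gap $\lambda_2 - \lambda_1 > 0$ to convert the non-strict inequalities $\ge M$ and $\le m$ available at the attained infima into the strict inequalities $> M$ and $< m$ needed to trigger the two contradictions; continuity of $V$ is what ensures those infima are attained in the first place.
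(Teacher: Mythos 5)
Your proof is correct and rests on essentially the same mechanism as the paper's: the structural facts \eqref{eq:ofkeyonet}--\eqref{eq:ayakbas}, the attainment of the defining infima via continuity of $V$, and the strict gap $\lambda_2-\lambda_1>0$ to upgrade $\ge M$ and $\le m$ to $>M$ and $<m$. The only difference is organizational: you prove the inclusion $[\underline x_i(\lambda_1,\omega),\overline y_i(\lambda_1,\omega))\subset[\overline x_{j}(\lambda_2,\omega),\underline y_{j}(\lambda_2,\omega))$, which is the mirror image of the paper's inclusion $[\underline y_j(\lambda_2,\omega),\overline x_{j+1}(\lambda_2,\omega))\subset[\overline y_i(\lambda_1,\omega),\underline x_{i+1}(\lambda_1,\omega))$, and either inclusion yields the covering of $\mathbb{R}$ immediately.
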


\begin{proof}
	For every $j\in\mathbb{Z}$ and $\omega\in\Omega_0$,
	\[ \lambda_1 - \beta V(\underline y_j(\lambda_2,\omega),\omega) = (\lambda_1 - \lambda_2) + \lambda_2 - \beta V(\underline y_j(\lambda_2,\omega),\omega) = (\lambda_1 - \lambda_2) + m < m. \]
	Therefore, $\underline y_j(\lambda_2,\omega)\in [\overline y_i(\lambda_1,\omega),\underline x_{i+1}(\lambda_1,\omega))$ for some $i\in\mathbb{Z}$. Since
	\[ \lambda_2 - \beta V(\underline x_{i+1}(\lambda_1,\omega),\omega) = (\lambda_2 - \lambda_1) + \lambda_1 - \beta V(\underline x_{i+1}(\lambda_1,\omega),\omega) = (\lambda_2 - \lambda_1) + M > M, \]
	we deduce that
	\[ \overline x_{j+1}(\lambda_2,\omega) = \inf\{ x\ge \underline y_j(\lambda_2,\omega):\, \lambda_2 - \beta V(x,\omega) > M \} < \underline x_{i+1}(\lambda_1,\omega). \]
	In short, for every $j\in\mathbb{Z}$ and $\omega\in\Omega_0$, there exists an $i\in\mathbb{Z}$ such that
	\[ [\underline y_j(\lambda_2,\omega),\overline x_{j+1}(\lambda_2,\omega)) \subset [\overline y_i(\lambda_1,\omega),\underline x_{i+1}(\lambda_1,\omega)), \]
	which readily implies the desired result.
\end{proof}

\begin{lemma}\label{lem:solsag}
	The maps $\lambda\mapsto\underline \theta_{1,3}(\lambda)$ and $\lambda\mapsto\overline \theta_{1,3}(\lambda)$ are right-continuous and left-continuous, respectively, on $[\beta,+\infty)\cap(M,m + \beta)$.
\end{lemma}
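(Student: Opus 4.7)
The plan is to express both quantities as expectations---by \eqref{eq:hurma1}--\eqref{eq:hurma2}, $\underline\theta_{1,3}(\lambda)=\mathbb{E}[(\underline f_{1,3}^\lambda)'(0,\omega)]$ and $\overline\theta_{1,3}(\lambda)=\mathbb{E}[(\overline f_{1,3}^\lambda)'(0,\omega)]$---and to invoke the dominated convergence theorem. The integrands are uniformly bounded in $\omega$ on any compact sub-interval of the domain of $\lambda$ (by Condition \ref{cond:pm} and \eqref{eq:evren}), so the task reduces to showing $\mathbb{P}$-a.s.\ pointwise convergence as $\mu\downarrow\lambda$ and $\mu\uparrow\lambda$, respectively.

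Call $x_0\in(-\infty,0]$ a \emph{witness for $\lambda$} if $\lambda-\beta V(x_0,\omega)\ge M$ and $\lambda-\beta V(y,\omega)\ge m$ for every $y\in[x_0,0]$. It follows from \eqref{eq:ofkeyonet} and \eqref{eq:bizdik} that $(\underline f_{1,3}^\lambda)'(0,\omega)=G_1^{-1}(\lambda-\beta V(0,\omega))$ precisely when a witness for $\lambda$ exists and $(\underline f_{1,3}^\lambda)'(0,\omega)=G_3^{-1}(\lambda-\beta V(0,\omega))$ otherwise. Since $\mu\mapsto G_i^{-1}(\mu-\beta V(0,\omega))$ is continuous, the desired pointwise convergence will follow once we know that the existence or non-existence of a witness is stable for $\mu$ slightly above $\lambda$. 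The easy direction: if $x_0$ is a witness for $\lambda$, then for every $\mu>\lambda$ we have $\mu-\beta V(x_0,\omega)>\lambda-\beta V(x_0,\omega)\ge M$ and likewise $\mu-\beta V(y,\omega)\ge m$ on $[x_0,0]$, so $x_0$ is also a witness for $\mu$.

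The substantive case is when no witness exists for $\lambda$ yet witnesses $x_0^n$ are available for some sequence $\mu_n\downarrow\lambda$. The plan here is to use \eqref{eq:evren} and ergodicity: by stationarity, $\sup_{y\le T}V(y,\omega)$ has the same distribution as $\sup_{y\le 0}V(y,\omega)$ for every $T$, and as $T\to+\infty$ it increases to $\sup V=1$, giving $\sup_{y\le 0}V(y,\omega)=1$ for $\mathbb{P}$-a.e.\ $\omega$. Since $\lambda<m+\beta$, one may pick $\delta\in(0,m+\beta-\lambda)$ and $y^*(\omega)\le 0$ with $\beta V(y^*(\omega),\omega)>\lambda-m+\delta$. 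For $\mu_n\in(\lambda,\lambda+\delta)$ this yields $\mu_n-\beta V(y^*(\omega),\omega)<m$, forcing $x_0^n>y^*(\omega)$. The bounded sequence $x_0^n\subset[y^*(\omega),0]$ has a subsequential limit $x_0^*$; using the continuity of $V(\cdot,\omega)$, one passes to the limit in both defining inequalities (take any $y\in(x_0^*,0]$, note that $y\in[x_0^n,0]$ for large $n$, and use $M\ge m$ at $y=x_0^*$) to produce a witness for $\lambda$, contradicting the assumption. Hence the regime is stable under $\mu_n\downarrow\lambda$, and DCT yields right-continuity of $\underline\theta_{1,3}$.

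For left-continuity of $\overline\theta_{1,3}$ along $\mu\uparrow\lambda$, the same strategy applies with \emph{strict} witnesses (using $>M$ and $>m$ in place of $\ge M$ and $\ge m$) to match the open-type conditions $>M,\,\le m$ appearing in \eqref{eq:cipayok}--\eqref{eq:anbizdik}. Both stability implications are now elementary monotonicity statements: a strict witness for $\mu<\lambda$ is a fortiori a strict witness for $\lambda$, and a strict witness $x_0$ for $\lambda$ keeps its strict inequalities under $\mu\uparrow\lambda$ thanks to the continuity of $V(\cdot,\omega)$ on the compact interval $[x_0,0]$---no compactness argument is required. The principal technical hurdle of the whole lemma is therefore the boundedness step in the right-continuity proof: without the almost sure identity $\sup_{y\le 0}V(y,\omega)=1$ furnished by \eqref{eq:evren} and ergodicity, the candidate witnesses $x_0^n$ could escape to $-\infty$ and the contradiction would collapse.
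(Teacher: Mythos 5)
Your proof is correct, and at bottom it runs on the same mechanism as the paper's: both reduce the lemma, via \eqref{eq:hurma1}--\eqref{eq:hurma2} and bounded convergence, to the $\mathbb{P}$-a.s.\ one-sided stability in $\lambda$ of whether the derivative at $x=0$ is of $G_1^{-1}$- or $G_3^{-1}$-type. The difference is in the implementation. The paper asserts that $\lambda\mapsto\underline x_i(\lambda,\omega)$ and $\lambda\mapsto\overline y_i(\lambda,\omega)$ are right-continuous (``easy to see'' from \eqref{eq:noanchor}--\eqref{eq:cip1}) and reads the conclusion off \eqref{eq:bizdik}; you bypass the crossing points and characterize the regime at $0$ by the existence of a witness on $(-\infty,0]$, proving right-stability by the Bolzano--Weierstrass contradiction and left-stability (with strict witnesses matching the strict/non-strict thresholds in \eqref{eq:cipayok} and \eqref{eq:anbizdik}) by monotonicity plus compactness of $[x_0,0]$. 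Your version is more self-contained and makes explicit where \eqref{eq:evren} and stationarity enter --- namely $\sup_{y\le 0}V(y,\omega)=1$ a.s., which confines the candidate witnesses to a bounded interval --- which is precisely the content hidden in the paper's ``easy to see''. One minor caveat: the equivalence ``the derivative at $0$ equals $G_1^{-1}(\lambda-\beta V(0,\omega))$ iff a witness exists'' can fail when $0$ coincides with a crossing point such as $\overline y_{-1}(\lambda,\omega)$ or $\underline x_{-1}(\lambda,\omega)$; since the crossing points form a countable covariant set, stationarity and Fubini show this is a null event for each fixed level (the same fact is implicitly needed for $\mathbb{E}[(\underline f_{1,3}^\lambda)'(0,\omega)]$ in \eqref{eq:hurma1} to make sense), so your ``precisely when'' should be read modulo a null set, which is harmless for the dominated convergence step along any fixed sequence $\mu_n\downarrow\lambda$ or $\mu_n\uparrow\lambda$.
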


\begin{proof}
	It is easy to see from \eqref{eq:noanchor}--\eqref{eq:cip1} that $\lambda\mapsto\underline x_i(\lambda,\omega)$ and $\lambda\mapsto\overline y_i(\lambda,\omega)$ are right-continuous for every $i\in\mathbb{Z}$ and $\omega\in\Omega_0$. 
	Hence, the right continuity of $\lambda\mapsto\underline \theta_{1,3}(\lambda)$ follows from \eqref{eq:bizdik} and \eqref{eq:hurma1}. There is a similar argument for the left continuity of $\lambda\mapsto\overline \theta_{1,3}(\lambda)$.
\end{proof}

Define the quantities
\begin{equation}\label{eq:chll}
\begin{aligned}
	\theta_{1,3}(\max\{\beta,M\}) &:= \underline \theta_{1,3}(\max\{\beta,M\}+) = \overline \theta_{1,3}(\max\{\beta,M\}+)\quad\text{and}\\
	\theta_{1,3}(m + \beta) &:= \underline \theta_{1,3}(m + \beta-) = \overline \theta_{1,3}(m + \beta-).
\end{aligned}
\end{equation}
Note that the second equality in each line follows from Lemmas \ref{lem:basic} and \ref{lem:ciftkas}.

\begin{lemma}\label{lem:carmsop}
	The collection
	\[ \mathcal{C} = \{ [\underline \theta_{1,3}(\lambda),\overline \theta_{1,3}(\lambda)]:\, \lambda\in (\max\{\beta,M\},m + \beta) \} \]
	is a partition of $( \theta_{1,3}(m + \beta), \theta_{1,3}(\max\{\beta,M\}) )$. In other words, for every $\theta$ in this open interval, there is a unique $\Lambda(\theta)\in (\max\{\beta,M\},m + \beta)$ such that $\theta\in [\underline \theta_{1,3}(\Lambda(\theta)),\overline \theta_{1,3}(\Lambda(\theta))]$.
\end{lemma}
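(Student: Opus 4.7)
The plan is to verify the three standard axioms of a partition, all of which fall out of the strict-decrease and one-sided continuity information already established for $\underline\theta_{1,3}$ and $\overline\theta_{1,3}$. First, note that Lemmas \ref{lem:basic} and \ref{lem:ciftkas} together imply that both maps are strictly decreasing on $(\max\{\beta,M\},m+\beta)$ and satisfy the separation $\underline\theta_{1,3}(\lambda_1)>\overline\theta_{1,3}(\lambda_2)\ge\underline\theta_{1,3}(\lambda_2)$ whenever $\lambda_1<\lambda_2$. This separation gives disjointness of the members of $\mathcal C$ immediately: for $\lambda_1<\lambda_2$, the interval $[\underline\theta_{1,3}(\lambda_2),\overline\theta_{1,3}(\lambda_2)]$ lies strictly below $[\underline\theta_{1,3}(\lambda_1),\overline\theta_{1,3}(\lambda_1)]$.

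Next I would show that each member of $\mathcal C$ is contained in the open interval $(\theta_{1,3}(m+\beta),\theta_{1,3}(\max\{\beta,M\}))$. Fix $\lambda\in(\max\{\beta,M\},m+\beta)$ and pick any $\lambda'\in(\max\{\beta,M\},\lambda)$. Lemma \ref{lem:ciftkas} yields $\overline\theta_{1,3}(\lambda)<\underline\theta_{1,3}(\lambda')$, and since $\underline\theta_{1,3}$ is strictly decreasing with right-limit $\theta_{1,3}(\max\{\beta,M\})$ at $\max\{\beta,M\}$ (which is thus a supremum over $\lambda'>\max\{\beta,M\}$), we get $\overline\theta_{1,3}(\lambda)<\theta_{1,3}(\max\{\beta,M\})$. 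The symmetric argument with some $\lambda''\in(\lambda,m+\beta)$ and the left-limit definition of $\theta_{1,3}(m+\beta)$ gives $\underline\theta_{1,3}(\lambda)>\theta_{1,3}(m+\beta)$, establishing containment.

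The main work is in the covering step: every $\theta\in(\theta_{1,3}(m+\beta),\theta_{1,3}(\max\{\beta,M\}))$ must belong to exactly one member of $\mathcal C$. I would define
\[ \Lambda(\theta):=\inf\{\lambda\in(\max\{\beta,M\},m+\beta):\,\underline\theta_{1,3}(\lambda)\le\theta\}. \]
The assumptions $\theta<\theta_{1,3}(\max\{\beta,M\})=\underline\theta_{1,3}(\max\{\beta,M\}+)$ and $\theta>\theta_{1,3}(m+\beta)=\underline\theta_{1,3}(m+\beta-)$ ensure respectively that this infimum is bounded away from $\max\{\beta,M\}$ and that the set is nonempty with elements arbitrarily close to $m+\beta$, so $\Lambda(\theta)$ lies in the open interval $(\max\{\beta,M\},m+\beta)$. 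The right-continuity of $\underline\theta_{1,3}$ from Lemma \ref{lem:solsag}, applied to a minimizing sequence, yields $\underline\theta_{1,3}(\Lambda(\theta))\le\theta$. For the opposite inequality, for every $\lambda'<\Lambda(\theta)$ the definition of $\Lambda(\theta)$ forces $\underline\theta_{1,3}(\lambda')>\theta$, so by Lemma \ref{lem:basic} also $\overline\theta_{1,3}(\lambda')>\theta$; letting $\lambda'\uparrow\Lambda(\theta)$ and invoking the left-continuity of $\overline\theta_{1,3}$ in Lemma \ref{lem:solsag} gives $\overline\theta_{1,3}(\Lambda(\theta))\ge\theta$. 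Thus $\theta\in[\underline\theta_{1,3}(\Lambda(\theta)),\overline\theta_{1,3}(\Lambda(\theta))]$, and uniqueness of $\Lambda(\theta)$ is forced by the disjointness already verified.

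The step I expect to require the most care is the verification that $\underline\theta_{1,3}(\Lambda(\theta))\le\theta\le\overline\theta_{1,3}(\Lambda(\theta))$ at the boundary value $\Lambda(\theta)$ itself: a priori this value might be a jump point for either function, so matching the one-sided continuity (right for $\underline\theta_{1,3}$, left for $\overline\theta_{1,3}$) to the correct direction of approach is the key, and it is precisely this matching — guaranteed by Lemma \ref{lem:solsag} — that makes the closed intervals $[\underline\theta_{1,3}(\lambda),\overline\theta_{1,3}(\lambda)]$ (rather than open or half-open ones) assemble into a partition with no gaps.
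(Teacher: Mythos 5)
Your proof is correct and follows essentially the same route as the paper's: disjointness from Lemma \ref{lem:ciftkas}, and covering via the same generalized inverse $\Lambda(\theta)=\inf\{\lambda:\,\underline\theta_{1,3}(\lambda)\le\theta\}$, with the right-continuity of $\underline\theta_{1,3}$ and the left-continuity of $\overline\theta_{1,3}$ from Lemma \ref{lem:solsag} combined with Lemmas \ref{lem:basic} and \ref{lem:ciftkas} to pin $\theta$ inside $[\underline\theta_{1,3}(\Lambda(\theta)),\overline\theta_{1,3}(\Lambda(\theta))]$. The only difference is that you spell out explicitly the containment of each interval of $\mathcal{C}$ in $(\theta_{1,3}(m+\beta),\theta_{1,3}(\max\{\beta,M\}))$, which the paper leaves implicit.
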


\begin{proof}
	The intervals in the collection $\mathcal{C}$ are disjoint by Lemma \ref{lem:ciftkas}. It remains to show that their union is $( \theta_{1,3}(m + \beta), \theta_{1,3}(\max\{\beta,M\}) )$. To this end, fix an arbitrary $\theta$ in this open interval and let 
	\begin{equation}\label{eq:nowaste}
		\Lambda(\theta) = \inf\{\lambda\in (\max\{\beta,M\},m + \beta):\,\underline \theta_{1,3}(\lambda)\le \theta \}.
	\end{equation}
	It follows from  Lemmas \ref{lem:basic}, \ref{lem:ciftkas} and \ref{lem:solsag} that
	\[ \underline \theta_{1,3}(\Lambda(\theta)) = \underline \theta_{1,3}(\Lambda(\theta)+) \le \theta \le \underline \theta_{1,3}(\Lambda(\theta)-) = \overline \theta_{1,3}(\Lambda(\theta)-) = \overline \theta_{1,3}(\Lambda(\theta)).\qedhere \]
\end{proof}

\begin{theorem}\label{thm:gennet}
	Under the assumptions in Theorem \ref{thm:velinim}, if $\max\{\beta,M\} < m + \beta$, then
	\[ \Ham(\theta) = \Ham^L(\theta) = \Ham^U(\theta) = \Lambda(\theta)\ \ \text{for every}\ \theta\in ( \theta_{1,3}(m + \beta), \theta_{1,3}(\max\{\beta,M\}) ) \]
	with the definitions in \eqref{eq:chll}--\eqref{eq:nowaste}. Consequently, $\Ham$ is nonincreasing on this interval.
\end{theorem}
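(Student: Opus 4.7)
The plan is to leverage Lemma \ref{lem:carmsop} to reduce the proof to establishing $\Ham(\theta) = \lambda$ on each interval $[\underline\theta_{1,3}(\lambda), \overline\theta_{1,3}(\lambda)]$ separately. At the two endpoints of such an interval, the identity $\Ham(\theta) = \lambda$ is already delivered by \eqref{eq:sflow}, since the functions $\underline f_{1,3}^\lambda(\cdot,\omega)$ and $\overline f_{1,3}^\lambda(\cdot,\omega)$ are solutions of \eqref{eq:aux} whose asymptotic slopes at both $\pm\infty$ are $\underline\theta_{1,3}(\lambda)$ and $\overline\theta_{1,3}(\lambda)$, respectively. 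So the real content is the case $\theta \in (\underline\theta_{1,3}(\lambda),\overline\theta_{1,3}(\lambda))$, which arises precisely when this interval is non-degenerate.

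For $\theta$ strictly inside $(\underline\theta_{1,3}(\lambda),\overline\theta_{1,3}(\lambda))$, I would invoke Proposition \ref{prop:onesided}. The strategy is to concatenate $\underline f_{1,3}^\lambda$ and $\overline f_{1,3}^\lambda$ so that the resulting function has the asymptotic slope $\overline\theta_{1,3}(\lambda)$ at $-\infty$ and $\underline\theta_{1,3}(\lambda)$ at $+\infty$ (for the subsolution needed in part (a)), and the reverse (for the supersolution needed in part (b)). For the subsolution version, I pick $z(\omega) := \tfrac12(\overline x_0(\lambda,\omega) + \underline y_0(\lambda,\omega))$ and define
\[
f_{\mathrm{sub}}(x,\omega) = \begin{cases} \overline f_{1,3}^\lambda(x,\omega) + c(\omega) & x \le z(\omega),\\ \underline f_{1,3}^\lambda(x,\omega) & x > z(\omega),\end{cases}
\]
with $c(\omega)$ chosen for continuity. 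The key observation, due to Lemma \ref{lem:interpm}, is that $(\overline x_0(\lambda,\omega), \underline y_0(\lambda,\omega)) \subset (\underline x_j(\lambda,\omega), \overline y_j(\lambda,\omega))$ for some $j$, so both $\overline f_{1,3}^\lambda$ and $\underline f_{1,3}^\lambda$ use $G_1^{-1}$ smoothly on a neighborhood of $z(\omega)$ and therefore agree in derivative there; this makes the concatenation $C^1$ at $z(\omega)$, and the classical PDE $G(f_{\mathrm{sub}}'(z(\omega),\omega)) + \beta V(z(\omega),\omega) = \lambda$ holds automatically since the common derivative equals $G_1^{-1}(\lambda - \beta V(z(\omega),\omega))$. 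Hence $f_{\mathrm{sub}}$ inherits the viscosity solution property from its two building blocks, and Proposition \ref{prop:onesided}(a) gives $\Ham^U(\theta) \le \lambda$. A mirror construction with the roles of $\underline f_{1,3}^\lambda$ and $\overline f_{1,3}^\lambda$ swapped yields $\Ham^L(\theta) \ge \lambda$, and combining the two bounds closes this step.

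Finally, the nonincreasing property of $\Ham$ on the interval reduces to the nonincreasing property of $\Lambda$, which follows from Lemma \ref{lem:ciftkas}: if $\theta_1 < \theta_2$ were to yield $\Lambda(\theta_1) < \Lambda(\theta_2)$, that lemma would force $\theta_1 \ge \underline\theta_{1,3}(\Lambda(\theta_1)) > \overline\theta_{1,3}(\Lambda(\theta_2)) \ge \theta_2$, a contradiction. The step I expect to require the most care is the verification that the concatenation $f_{\mathrm{sub}}$ is actually a global viscosity solution: one must check that at $z(\omega)$ no spurious super- or subdifferential arises, which is why I deliberately place the gluing point in the interior of a region where both constituents use the same branch $G_1^{-1}$ (an alternative gluing point in $(\underline y_i,\overline x_{i+1})$, where both use $G_3^{-1}$, would work equally well). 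Uniform boundedness of one-sided derivatives — required to apply Proposition \ref{prop:onesided} via \eqref{eq:kostas} — is immediate from the fact that each derivative takes values in the compact set $[G_1^{-1}(\lambda), p_m] \cup [p_M, 0]$.
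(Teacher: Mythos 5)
Your proposal is correct and takes essentially the same route as the paper: reduce to a single level $\lambda=\Lambda(\theta)$ via Lemma \ref{lem:carmsop}, settle the endpoints $\underline\theta_{1,3}(\Lambda(\theta))$ and $\overline\theta_{1,3}(\Lambda(\theta))$ with \eqref{eq:sflow}, and for interior $\theta$ concatenate $\overline f_{1,3}^{\Lambda(\theta)}$ with $\underline f_{1,3}^{\Lambda(\theta)}$ (and vice versa) and invoke Proposition \ref{prop:onesided}, with the monotonicity of $\Lambda$ coming from Lemma \ref{lem:ciftkas}. The only deviation is cosmetic: the paper glues at $\underline x_0(\Lambda(\theta),\omega)$ and $\underline y_0(\Lambda(\theta),\omega)$ and re-checks the sub/supersolution inequalities at the junction, whereas you glue inside $(\overline x_0,\underline y_0)$ where, by Lemma \ref{lem:interpm}, both constituents use the branch $G_1^{-1}$ and differ by a constant, so the junction check is immediate; both variants are valid and give the same conclusion.
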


\begin{proof}
	Fix $\theta\in ( \theta_{1,3}(m + \beta), \theta_{1,3}(\max\{\beta,M\}) )$.	Note that $\theta\in [\underline \theta_{1,3}(\Lambda(\theta)),\overline \theta_{1,3}(\Lambda(\theta))]$ by Lemma \ref{lem:carmsop}. If $\theta\in \{\underline \theta_{1,3}(\Lambda(\theta)),\overline \theta_{1,3}(\Lambda(\theta))\}$, then the desired equalities follow from \eqref{eq:sflow}.
	
	It remains to consider the case $\theta\in(\underline \theta_{1,3}(\Lambda(\theta)),\overline \theta_{1,3}(\Lambda(\theta)))\ne\emptyset$. (We will characterize the nonemptiness of this interval in the proof of Theorem \ref{thm:intermed}. See Section \ref{sec:interflat}.) For every $\omega\in\Omega_0$, construct $f_{1,3,d}^{\Lambda(\theta)}(\cdot,\omega),f_{1,3,u}^{\Lambda(\theta)}(\cdot,\omega)\in\Lip(\mathbb{R})$ by setting $f_{1,3,d}^{\Lambda(\theta)}(0,\omega) = f_{1,3,u}^{\Lambda(\theta)}(0,\omega) = 0$,
	\[ (f_{1,3,d}^{\Lambda(\theta)})'(x,\omega) = \begin{cases} (\overline f_{1,3}^{\Lambda(\theta)})'(x,\omega)&\text{if}\ x < \underline x_0(\Lambda(\theta),\omega),\\ (\underline f_{1,3}^{\Lambda(\theta)})'(x,\omega)&\text{if}\ x > \underline x_0(\Lambda(\theta),\omega)\end{cases} \]
	and
	\[ (f_{1,3,u}^{\Lambda(\theta)})'(x,\omega) = \begin{cases} (\underline f_{1,3}^{\Lambda(\theta)})'(x,\omega)&\text{if}\ x < \underline y_0(\Lambda(\theta),\omega),\\ (\overline f_{1,3}^{\Lambda(\theta)})'(x,\omega)&\text{if}\ x > \underline y_0(\Lambda(\theta),\omega).\end{cases} \]
	Repeating the analysis we carried out earlier for $\underline f_{1,3}^\lambda(\cdot,\omega)$, we deduce that $f_{1,3,d}^{\Lambda(\theta)}(\cdot,\omega),f_{1,3,u}^{\Lambda(\theta)}(\cdot,\omega)$ are solutions of \eqref{eq:aux} with $\lambda = \Lambda(\theta)$. Moreover,
	\begin{align*}
		\lim_{x\to+\infty} \frac{f_{1,3,d}^{\Lambda(\theta)}(x,\omega)}{x} &= \lim_{x\to-\infty} \frac{f_{1,3,u}^{\Lambda(\theta)}(x,\omega)}{x} = \underline \theta_{1,3}(\Lambda(\theta))\\
		&< \overline \theta_{1,3}(\Lambda(\theta)) = \lim_{x\to-\infty} \frac{f_{1,3,d}^{\Lambda(\theta)}(x,\omega)}{x} = \lim_{x\to+\infty} \frac{f_{1,3,u}^{\Lambda(\theta)}(x,\omega)}{x}
	\end{align*}
	for $\mathbb{P}$-a.e.\ $\omega$ by \eqref{eq:hurma1}--\eqref{eq:hurma2}. Therefore,
	\begin{equation}\label{eq:nadad}
		\Ham^U(\theta) \le \Lambda(\theta) \ \ \text{and}\ \  \Ham^L(\theta) \ge \Lambda(\theta)\ \ \text{for every}\ \theta\in (\underline \theta_{1,3}(\Lambda(\theta)),\overline \theta_{1,3}(\Lambda(\theta)))
	\end{equation}
	by Proposition \ref{prop:onesided}.
\end{proof}

\subsection{Strictly decreasing piece involving $G_3$}

We proved in Subsection \ref{sub:dec3} that, in Cases I and II, for every $\lambda \in [\beta,M]$,
\[ \Ham(\theta_3(\lambda)) = \Ham^L(\theta_3(\lambda)) = \Ham^U(\theta_3(\lambda)) = \lambda \]
with the definition in \eqref{eq:ddnono}. We added that $\Ham$ is strictly decreasing on $[\theta_3(M),\theta_3(\beta)]$.

\subsection{Strictly increasing piece involving $G_4$}

We proved in Subsection \ref{sub:inc4} that, in Cases I, II and III, for every $\lambda \in [\beta,+\infty)$,
\[ \Ham(\theta_4(\lambda)) = \Ham^L(\theta_4(\lambda)) = \Ham^U(\theta_4(\lambda)) = \lambda \]
with the definition in \eqref{eq:ddnono}. We added that $\Ham$ is strictly increasing on $[\theta_4(\beta),+\infty)$.

\subsection{Flat piece at height $\beta$}

We proved in Subsection \ref{sub:flatbeta} that, in Cases I and II,
\[ \Ham(\theta) = \Ham^L(\theta) = \Ham^U(\theta) = \beta\ \ \text{for every}\ \theta\in(\theta_3(\beta),\theta_4(\beta)). \]

\subsection{Flat piece at height $M = m + \beta$ (in the easy subcase)}\label{sub:medeasy}

Assume that $\beta < M = m + \beta$. For every $\epsilon\in(0,\beta)$ and $\mathbb{P}$-a.e.\ $\omega$, there exist $x_0 = x_0(\omega)\in\mathbb{R}$ and $y_0 = y_0(\omega)\in\mathbb{R}$ such that $\beta V(x_0,\omega) = \epsilon$ and $\beta V(y_0,\omega) = \beta - \epsilon$. Construct $f_{3,1}^{M}(\cdot,\omega),f_{1,3}^{M}(\cdot,\omega)\in\Lip(\mathbb{R})$ by setting $f_{3,1}^{M}(0,\omega) = f_{1,3}^{M}(0,\omega) = 0$,
\[ (f_{3,1}^{M})'(x,\omega) = \begin{cases} (f_3^{M})'(x,\omega)&\text{if}\ x < x_0,\\ (f_1^{M})'(x,\omega)&\text{if}\ x > x_0\end{cases} \quad\text{and}\quad (f_{1,3}^{M})'(x,\omega) = \begin{cases} (f_1^{M})'(x,\omega)&\text{if}\ x < y_0,\\ (f_3^{M})'(x,\omega)&\text{if}\ x > y_0\end{cases} \]
with the notation in \eqref{eq:ezgi1} and \eqref{eq:ezgi3}.
Observe that, at every $x\in\mathbb{R}\setminus\{x_0\}$, the function $f_{3,1}^{M}(\cdot,\omega)$ satisfies both \eqref{eq:vsol} and \eqref{eq:vsag} with $\lambda = M$. In addition,
\begin{align*}
(f_{3,1}^{M})_+'(x_0,\omega) = (f_1^{M})'(x_0,\omega) &= G_1^{-1}(M - \beta V(x_0,\omega)) = G_1^{-1}(M - \epsilon) < p_m < p_M\quad\text{and}\\
(f_{3,1}^{M})_-'(x_0,\omega) = (f_3^{M})'(x_0,\omega) &= G_3^{-1}(M - \beta V(x_0,\omega)) = G_3^{-1}(M - \epsilon) > p_M.
\end{align*}
It is clear from Condition \ref{cond:pm} that
\[ G(p) + \beta V(x_0,\omega) = G(p) + \epsilon \le M + \epsilon \]
for every $p\in [G_1^{-1}(M - \epsilon),G_3^{-1}(M - \epsilon)],$
i.e., $f_{3,1}^{M}(\cdot,\omega)$ satisfies \eqref{eq:vsol} at $x = x_0$ with $\lambda = M + \epsilon$. We deduce that $f_{3,1}^{M}(\cdot,\omega)$ is a subsolution of \eqref{eq:aux} with $\lambda = M + \epsilon$.
Similarly, at every $x\in\mathbb{R}\setminus\{y_0\}$, the function $f_{1,3}^{M}(\cdot,\omega)$ satisfies both \eqref{eq:vsol} and \eqref{eq:vsag} with $\lambda = M$. In addition,
\begin{align*}
(f_{1,3}^{M})_-'(y_0,\omega) = (f_1^{M})'(y_0,\omega) &= G_1^{-1}(m + \beta - \beta V(y_0,\omega)) = G_1^{-1}(m + \epsilon) < p_m < p_M\quad\text{and}\\
(f_{1,3}^{M})_+'(y_0,\omega) = (f_3^{M})'(y_0,\omega) &= G_3^{-1}(m + \beta - \beta V(y_0,\omega)) = G_3^{-1}(m + \epsilon) > p_M.
\end{align*}
It is clear from Condition \ref{cond:pm} that
\[ G(p) + \beta V(y_0,\omega) = G(p) + \beta - \epsilon \ge m + \beta - \epsilon = M - \epsilon \]
for every $p\in [G_1^{-1}(m + \epsilon),G_3^{-1}(m + \epsilon)],$
i.e., $f_{1,3}^{M}(\cdot,\omega)$ satisfies \eqref{eq:vsag} at $x = y_0$ with $\lambda = M - \epsilon$. We deduce that $f_{1,3}^{M}(\cdot,\omega)$ is a supersolution of \eqref{eq:aux} with $\lambda = M - \epsilon$.
Moreover,
\begin{align*}
	\lim_{x\to+\infty} \frac{f_{3,1}^M(x,\omega)}{x} &= \lim_{x\to-\infty} \frac{f_{1,3}^M(x,\omega)}{x} = \theta_1(M) < p_m < p_M\\
	&< \theta_3(M) = \lim_{x\to-\infty} \frac{f_{3,1}^M(x,\omega)}{x} = \lim_{x\to+\infty} \frac{f_{1,3}^M(x,\omega)}{x}
\end{align*}
for $\mathbb{P}$-a.e.\ $\omega$.
Therefore,
\[ \Ham^U(\theta) \le M + \epsilon \ \ \text{and}\ \  \Ham^L(\theta) \ge M - \epsilon\ \ \text{for every}\ \theta\in(\theta_1(M),\theta_3(M)) \]
by Proposition \ref{prop:onesided}. Since $\epsilon\in(0,\beta)$ is arbitrary, we conclude that
\[ \Ham(\theta) = \Ham^L(\theta) = \Ham^U(\theta) = M\ \ \text{for every}\ \theta\in(\theta_1(M),\theta_3(M)). \]

\subsection{Possible flat piece at height $M$ (excluding the easy subcase)}\label{sub:apis}

Assume that $\beta < M < m + \beta$. Fix an arbitrary $\epsilon\in(0, m + \beta - M)$. Recall $\underline x_0 = \underline x_0(M + \epsilon,\omega)$ and $\overline y_0 = \overline y_0(M + \epsilon,\omega)$ from Subsection \ref{sub:ister}. Note that
\[ M + \epsilon - \beta V(\underline x_0,\omega) = M \quad\text{and}\quad M + \epsilon - \beta V(\overline y_0,\omega) = m \]
for $\mathbb{P}$-a.e.\ $\omega$. Construct $f_{3,1,3}^M(\cdot,\omega),f_{1,3,3}^M(\cdot,\omega)\in\Lip(\mathbb{R})$ by setting $f_{3,1,3}^M(0,\omega) = f_{1,3,3}^M(0,\omega) = 0$,
\[ (f_{3,1,3}^M)'(x,\omega) = \begin{cases} (f_{3}^M)'(x,\omega)&\text{if}\ x < \underline x_0,\\ (\underline f_{1,3}^{M + \epsilon})'(x,\omega)&\text{if}\ x > \underline x_0\end{cases} \quad\text{and}\quad (f_{1,3,3}^M)'(x,\omega) = \begin{cases} (\underline f_{1,3}^{M + \epsilon})'(x,\omega)&\text{if}\ x < \overline y_0,\\ (f_3^M)'(x,\omega)&\text{if}\ x > \overline y_0\end{cases} \]
with the notation in \eqref{eq:ezgi3} and \eqref{eq:bizdik}.
Observe that, at every $x\in\mathbb{R}\setminus\{\underline x_0\}$, the function $f_{3,1,3}^M(\cdot,\omega)$ satisfies \eqref{eq:vsol} with $\lambda = M + \epsilon$. In addition,
\begin{align*}
(f_{3,1,3}^M)_+'(\underline x_0,\omega) &= (\underline f_{1,3}^{M + \epsilon})_+'(\underline x_0,\omega) =  G_1^{-1}(M + \epsilon - \beta V(\underline x_0,\omega)) = G_1^{-1}(M) < p_m < p_M\quad\text{and}\\
(f_{3,1,3}^M)_-'(\underline x_0,\omega) &= (f_3^M)'(\underline x_0,\omega) = G_3^{-1}(M - \beta V(\underline x_0,\omega)) = G_3^{-1}(M - \epsilon) > p_M.
\end{align*}
It is clear from Condition \ref{cond:pm} that
\[ G(p) + \beta V(\underline x_0,\omega) = G(p) + \epsilon \le M + \epsilon \]
for every $p\in [G_1^{-1}(M),G_3^{-1}(M - \epsilon)],$
i.e., $f_{3,1,3}^M(\cdot,\omega)$ satisfies \eqref{eq:vsol} at $x = \underline x_0$ with $\lambda = M + \epsilon$. We deduce that $f_{3,1,3}^M(\cdot,\omega)$ is a subsolution of \eqref{eq:aux} with $\lambda = M + \epsilon$. Similarly, at every $x\in\mathbb{R}\setminus\{\overline y_0\}$, the function $f_{1,3,3}^M(\cdot,\omega)$ satisfies \eqref{eq:vsag} with $\lambda = M$. In addition,
\begin{align*}
(f_{1,3,3}^M)_-'(\overline y_0,\omega) &= (\underline f_{1,3}^{M + \epsilon})_-'(\overline y_0,\omega) = G_1^{-1}(M + \epsilon - \beta V(\overline y_0,\omega)) = G_1^{-1}(m) = p_m < p_M\quad\text{and}\\
(f_{1,3,3}^M)_+'(\overline y_0,\omega) &= (f_3^{M})'(\overline y_0,\omega) = G_3^{-1}(M - \beta V(\overline y_0,\omega)) = G_3^{-1}(m - \epsilon) > p_M.
\end{align*}
It is clear from Condition \ref{cond:pm} that
\[ G(p) + \beta V(\overline y_0,\omega) = G(p) + M + \epsilon - m \ge M \]
for every $p\in [G_1^{-1}(m),G_3^{-1}(m - \epsilon)],$
i.e., $f_{1,3,3}^M(\cdot,\omega)$ satisfies \eqref{eq:vsag} at $x = \overline y_0$ with $\lambda = M$. We deduce that $f_{1,3,3}^M(\cdot,\omega)$ is a supersolution of \eqref{eq:aux} with $\lambda = M$. Moreover,
\begin{align*}
\lim_{x\to+\infty} \frac{f_{3,1,3}^M(x,\omega)}{x} &= \lim_{x\to-\infty} \frac{f_{1,3,3}^M(x,\omega)}{x} = \underline\theta_{1,3}(M + \epsilon) \\
&< \theta_3(M) = \lim_{x\to-\infty} \frac{f_{3,1,3}^M(x,\omega)}{x} = \lim_{x\to+\infty} \frac{f_{1,3,3}^M(x,\omega)}{x}
\end{align*}
for $\mathbb{P}$-a.e.\ $\omega$. Therefore,
\[ \Ham^U(\theta) \le M + \epsilon \ \ \text{and}\ \  \Ham^L(\theta) \ge M\ \ \text{for every}\ \theta\in(\underline\theta_{1,3}(M + \epsilon),\theta_3(M)) \]
by Proposition \ref{prop:onesided}.
Since $\epsilon\in(0, m + \beta - M)$ is arbitrary, we recall \eqref{eq:chll} and conclude that
\[ \Ham(\theta) = \Ham^L(\theta) = \Ham^U(\theta) = M\ \ \text{for every}\ \theta\in[\theta_{1,3}(M),\theta_3(M)). \]

We hereby showed that the graph of $\Ham$ has a flat piece at height $M$ if and only if $\theta_{1,3}(M) < \theta_3(M)$. We will characterize this strict inequality in the proof of Theorem \ref{thm:ergthmapp} (see Section \ref{sec:newflat}).

\subsection{Possible flat piece at height $m + \beta$ (excluding the easy subcase)}\label{sub:eflat}

In this subsection, we will simultaneously consider Cases II and III of Theorem \ref{thm:velinim} (except their easy subcases).

Assume that $\max\{\beta,M\} < m + \beta$. Fix an arbitrary $\epsilon\in(0, m + \beta - \max\{\beta,M\})$. Recall $\overline x_0 = \overline x_0(m + \beta - \epsilon,\omega)$ and $\underline y_0 = \underline y_0(m + \beta - \epsilon,\omega)$ from Subsection \ref{sub:ister}. Note that
\[ m + \beta - \epsilon - \beta V(\overline x_0,\omega) = M \quad\text{and}\quad m + \beta - \epsilon - \beta V(\underline y_0,\omega) = m \]
for $\mathbb{P}$-a.e.\ $\omega$. Construct $f_{1,3,1}^{m + \beta}(\cdot,\omega),f_{1,1,3}^{m + \beta}(\cdot,\omega)\in\Lip(\mathbb{R})$ by setting $f_{1,3,1}^{m + \beta}(0,\omega) = f_{1,1,3}^{m + \beta}(0,\omega) = 0$,
\begin{align*}
	(f_{1,3,1}^{m + \beta})'(x,\omega) &= \begin{cases} (\overline f_{1,3}^{m + \beta - \epsilon})'(x,\omega)&\text{if}\ x < \overline x_0,\\ (f_1^{m + \beta})'(x,\omega)&\text{if}\ x > \overline x_0\end{cases}\quad \text{and}\\
	(f_{1,1,3}^{m + \beta})'(x,\omega) &= \begin{cases} (f_1^{m + \beta})'(x,\omega)&\text{if}\ x < \underline y_0,\\ (\overline f_{1,3}^{m + \beta - \epsilon})'(x,\omega)&\text{if}\ x > \underline y_0\end{cases}
\end{align*}
with the notation in \eqref{eq:ezgi1} and \eqref{eq:anbizdik}.
Observe that, at every $x\in\mathbb{R}\setminus\{\overline x_0\}$, the function $f_{1,3,1}^{m + \beta}(\cdot,\omega)$ satisfies \eqref{eq:vsol} with $\lambda = m + \beta$. In addition,
\begin{align*}
(f_{1,3,1}^{m + \beta})_+'(\overline x_0,\omega) &= (f_1^{m + \beta})'(\overline x_0,\omega) =  G_1^{-1}(m + \beta - \beta V(\overline x_0,\omega)) = G_1^{-1}(M + \epsilon) < p_m < p_M\quad\text{and}\\
(f_{1,3,1}^{m + \beta})_-'(\overline x_0,\omega) &= (\overline f_{1,3}^{m + \beta - \epsilon})_-'(\overline x_0,\omega) = G_3^{-1}(m + \beta - \epsilon - \beta V(\overline x_0,\omega)) = G_3^{-1}(M) = p_M.
\end{align*}
It is clear from Condition \ref{cond:pm} that
\[ G(p) + \beta V(\overline x_0,\omega) = G(p) + m + \beta - \epsilon - M \le m + \beta \]
for every $p\in [G_1^{-1}(M + \epsilon),G_3^{-1}(M)],$
i.e., $f_{1,3,1}^{m + \beta}(\cdot,\omega)$ satisfies \eqref{eq:vsol} at $x = \overline x_0$ with $\lambda = m + \beta$. We deduce that $f_{1,3,1}^{m + \beta}(\cdot,\omega)$ is a subsolution of \eqref{eq:aux} with $\lambda = m + \beta$.
Similarly, at every $x\in\mathbb{R}\setminus\{\underline y_0\}$, the function $f_{1,1,3}^{m + \beta}(\cdot,\omega)$ satisfies \eqref{eq:vsag} with $\lambda = m + \beta - \epsilon$. In addition,
\begin{align*}
(f_{1,1,3}^{m + \beta})_-'(\underline y_0,\omega) &= (f_1^{m + \beta})'(\underline y_0,\omega) = G_1^{-1}(m + \beta - \beta V(\underline y_0,\omega)) = G_1^{-1}(m + \epsilon) < p_m < p_M\quad\text{and}\\
(f_{1,1,3}^{m + \beta})_+'(\underline y_0,\omega) &= (\overline f_{1,3}^{m + \beta - \epsilon})_+'(\underline y_0,\omega) = G_3^{-1}(m + \beta - \epsilon - \beta V(\underline y_0,\omega)) = G_3^{-1}(m) > p_M.
\end{align*}
It is clear from Condition \ref{cond:pm} that
\[ G(p) + \beta V(\underline y_0,\omega) = G(p) + \beta - \epsilon \ge m + \beta - \epsilon \]
for every $p\in [G_1^{-1}(m + \epsilon),G_3^{-1}(m)],$
i.e., $f_{1,1,3}^{m + \beta}(\cdot,\omega)$ satisfies \eqref{eq:vsag} at $x = \underline y_0$ with $\lambda = m + \beta - \epsilon$. We deduce that $f_{1,1,3}^{m + \beta}(\cdot,\omega)$ is a supersolution of \eqref{eq:aux} with $\lambda = m + \beta - \epsilon$. Moreover,
\begin{align*}
\lim_{x\to+\infty} \frac{f_{1,3,1}^{m + \beta}(x,\omega)}{x} &= \lim_{x\to-\infty} \frac{f_{1,1,3}^{m + \beta}(x,\omega)}{x} = \theta_1(m + \beta) \\
&< \overline\theta_{1,3}(m + \beta - \epsilon) = \lim_{x\to-\infty} \frac{f_{1,3,1}^{m + \beta}(x,\omega)}{x} = \lim_{x\to+\infty} \frac{f_{1,1,3}^{m + \beta}(x,\omega)}{x}
\end{align*}
for $\mathbb{P}$-a.e.\ $\omega$. Therefore,
\[ \Ham^U(\theta) \le m + \beta \ \ \text{and}\ \  \Ham^L(\theta) \ge m + \beta - \epsilon\ \ \text{for every}\ \theta\in(\theta_1(m + \beta),\overline\theta_{1,3}(m + \beta - \epsilon)) \]
by Proposition \ref{prop:onesided}.
Since $\epsilon\in(0, m + \beta - \max\{\beta,M\})$ is arbitrary, we recall \eqref{eq:chll} and conclude that
\[ \Ham(\theta) = \Ham^L(\theta) = \Ham^U(\theta) = m + \beta\ \ \text{for every}\ \theta\in(\theta_1(m + \beta),\theta_{1,3}(m + \beta)]. \]

We hereby showed that the graph of $\Ham$ has a flat piece at height $m + \beta$ if and only if $\theta_1(m + \beta) < \theta_{1,3}(m + \beta)$. We will characterize this strict inequality in the proof of Theorem \ref{thm:ergthmapp} (see Section \ref{sec:newflat}).

\section{Case III: Strong potential ($M \le \beta \le m + \beta$)}\label{sec:strong}

\subsection{Strictly decreasing piece involving $G_1$}

We proved in Subsection \ref{sub:dec1} that, in Cases I, II and III, for every $\lambda \in [m + \beta,+\infty)$,
\[ \Ham(\theta_1(\lambda)) = \Ham^L(\theta_1(\lambda)) = \Ham^U(\theta_1(\lambda)) = \lambda \]
with the definition in \eqref{eq:ddnono}. We added that $\Ham$ is strictly decreasing on $(-\infty,\theta_1(m + \beta)]$.

\subsection{Nonincreasing piece involving both $G_1$ and $G_3$ (excluding the easy subcase)}

We proved in Subsection \ref{sub:ister} that, in Cases II and III (except their easy subcases), 
\[ \Ham(\theta) = \Ham^L(\theta) = \Ham^U(\theta) = \Lambda(\theta)\ \ \text{for every}\ \theta\in ( \theta_{1,3}(m + \beta), \theta_{1,3}(\max\{\beta,M\}) ). \]
We added that $\Ham$ is nonincreasing on this interval. See Theorem \ref{thm:gennet}.

\subsection{Strictly increasing piece involving $G_4$}

We proved in Subsection \ref{sub:inc4} that, in Cases I, II and III, for every $\lambda \in [\beta,+\infty)$,
\[ \Ham(\theta_4(\lambda)) = \Ham^L(\theta_4(\lambda)) = \Ham^U(\theta_4(\lambda)) = \lambda \]
with the definition in \eqref{eq:ddnono}. We added that $\Ham$ is strictly increasing on $[\theta_4(\beta),+\infty)$.

\subsection{Flat piece at height $\beta = m + \beta$ (in the easy subcase)}\label{sub:streasy}

Assume that $M \le \beta = m + \beta$. For every $\epsilon\in(0,\beta)$ and $\mathbb{P}$-a.e.\ $\omega$, there exist $x_0 = x_0(\omega)\in\mathbb{R}$ and $y_0 = y_0(\omega)\in\mathbb{R}$ such that $\beta V(x_0,\omega) = \epsilon$ and $\beta V(y_0,\omega) = \beta - \epsilon$. Construct $f_{4,1}^\beta(\cdot,\omega),f_{1,4}^\beta(\cdot,\omega)\in\Lip(\mathbb{R})$ by setting $f_{4,1}^\beta(0,\omega) = f_{1,4}^\beta(0,\omega) = 0$,
\[ (f_{4,1}^\beta)'(x,\omega) = \begin{cases} (f_4^\beta)'(x,\omega)&\text{if}\ x < x_0,\\ (f_1^\beta)'(x,\omega)&\text{if}\ x > x_0\end{cases} \quad\text{and}\quad (f_{1,4}^\beta)'(x,\omega) = \begin{cases} (f_1^\beta)'(x,\omega)&\text{if}\ x < y_0,\\ (f_4^\beta)'(x,\omega)&\text{if}\ x > y_0\end{cases} \]
with the notation in \eqref{eq:ezgi1} and \eqref{eq:ezgi4}.
Observe that, at every $x\in\mathbb{R}\setminus\{x_0\}$, the function $f_{4,1}^\beta(\cdot,\omega)$ satisfies both \eqref{eq:vsol} and \eqref{eq:vsag} with $\lambda = \beta$. In addition,
\begin{align*}
(f_{4,1}^\beta)_+'(x_0,\omega) &= (f_1^\beta)'(x_0,\omega) = G_1^{-1}(\beta - \beta V(x_0,\omega)) = G_1^{-1}(\beta - \epsilon) < p_m < 0\quad\text{and}\\
(f_{4,1}^\beta)_-'(x_0,\omega) &= (f_4^\beta)'(x_0,\omega) = G_4^{-1}(\beta - \beta V(x_0,\omega)) = G_4^{-1}(\beta - \epsilon) > 0.
\end{align*}
It is clear from Condition \ref{cond:pm} that
\[ G(p) + \beta V(x_0,\omega) = G(p) + \epsilon \le \max\{\beta - \epsilon,M\} + \epsilon \le \beta + \epsilon \]
for every $p\in [G_1^{-1}(\beta - \epsilon),G_4^{-1}(\beta - \epsilon)],$
i.e., $f_{4,1}^\beta(\cdot,\omega)$ satisfies \eqref{eq:vsol} at $x = x_0$ with $\lambda = \beta + \epsilon$. We deduce that $f_{4,1}^\beta(\cdot,\omega)$ is a subsolution of \eqref{eq:aux} with $\lambda = \beta + \epsilon$.
Similarly, at every $x\in\mathbb{R}\setminus\{y_0\}$, the function $f_{1,4}^\beta(\cdot,\omega)$ satisfies both \eqref{eq:vsol} and \eqref{eq:vsag} with $\lambda = \beta$. In addition,
\begin{align*}
(f_{1,4}^\beta)_-'(y_0,\omega) &= (f_1^\beta)'(y_0,\omega) = G_1^{-1}(\beta - \beta V(y_0,\omega)) = G_1^{-1}(\epsilon) < p_m < 0\quad\text{and}\\
(f_{1,4}^\beta)_+'(y_0,\omega) &= (f_4^\beta)'(y_0,\omega) = G_4^{-1}(\beta - \beta V(y_0,\omega)) = G_4^{-1}(\epsilon) > 0.
\end{align*}
It is clear from Condition \ref{cond:pm} that
\[ G(p) + \beta V(y_0,\omega) = G(p) + \beta - \epsilon \ge \beta - \epsilon \]
for every $p\in [G_1^{-1}(\epsilon),G_4^{-1}(\epsilon)],$
i.e., $f_{1,4}^\beta(\cdot,\omega)$ satisfies \eqref{eq:vsag} at $x = y_0$ with $\lambda = \beta - \epsilon$. We deduce that $f_{1,4}^\beta(\cdot,\omega)$ is a supersolution of \eqref{eq:aux} with $\lambda = \beta - \epsilon$.
Moreover,
\begin{align*}
\lim_{x\to+\infty} \frac{f_{4,1}^\beta(x,\omega)}{x} &= \lim_{x\to-\infty} \frac{f_{1,4}^\beta(x,\omega)}{x} = \theta_1(\beta) < p_m < 0\\
&< \theta_4(\beta) = \lim_{x\to-\infty} \frac{f_{4,1}^\beta(x,\omega)}{x} = \lim_{x\to+\infty} \frac{f_{1,4}^\beta(x,\omega)}{x}
\end{align*}
for $\mathbb{P}$-a.e.\ $\omega$.
Therefore,
\[ \Ham^U(\theta) \le \beta + \epsilon \ \ \text{and}\ \  \Ham^L(\theta) \ge \beta - \epsilon\ \ \text{for every}\ \theta\in(\theta_1(\beta),\theta_4(\beta)) \]
by Proposition \ref{prop:onesided}. Since $\epsilon\in(0,\beta)$ is arbitrary, we conclude that
\[ \Ham(\theta) = \Ham^L(\theta) = \Ham^U(\theta) = \beta\ \ \text{for every}\ \theta\in(\theta_1(\beta),\theta_4(\beta)). \]

\subsection{Flat piece at height $\beta$ (excluding the easy subcase)}

Assume that $M \le \beta < m + \beta$. For every $\epsilon\in(0,m/2)$ and $\mathbb{P}$-a.e.\ $\omega$, there exists a $z_0 = z_0(\omega)\in\mathbb{R}$ such that $\beta V(z_0,\omega) = \beta - \epsilon$. Construct $f_{4,1,3}^\beta(\cdot,\omega),f_{1,3,4}^\beta(\cdot,\omega)\in\Lip(\mathbb{R})$ by setting $f_{4,1,3}^\beta(0,\omega) = f_{1,3,4}^\beta(0,\omega) = 0$,
\[ (f_{4,1,3}^\beta)'(x,\omega) = \begin{cases} (f_4^\beta)'(x,\omega)&\text{if}\ x < z_0,\\ (\underline f_{1,3}^{\beta + \epsilon})'(x,\omega)&\text{if}\ x > z_0\end{cases}\quad \text{and}\quad (f_{1,3,4}^\beta)'(x,\omega) = \begin{cases} (\underline f_{1,3}^{\beta + \epsilon})'(x,\omega)&\text{if}\ x < z_0,\\ (f_4^\beta)'(x,\omega)&\text{if}\ x > z_0\end{cases} \]
with the notation in \eqref{eq:ezgi4} and \eqref{eq:bizdik}.
Observe that, at every $x\in\mathbb{R}\setminus\{z_0\}$, the function $f_{4,1,3}^\beta(\cdot,\omega)$ (resp.\ $f_{1,3,4}^\beta(\cdot,\omega)$) satisfies \eqref{eq:vsol} (resp.\ \eqref{eq:vsag}) with $\lambda = \beta + \epsilon$ (resp.\ $\lambda = \beta$). In addition,
\begin{align*}
(f_{4,1,3}^\beta)_+'(z_0,\omega) &= (f_{1,3,4}^\beta)_-'(z_0,\omega) = (\underline f_{1,3}^{\beta + \epsilon})'(z_0,\omega) = G_3^{-1}(\beta + \epsilon - \beta V(z_0,\omega)) = G_3^{-1}(2\epsilon) < 0\quad\text{and}\\
(f_{4,1,3}^\beta)_-'(z_0,\omega) &= (f_{1,3,4}^\beta)_+'(z_0,\omega) = (f_4^\beta)'(z_0,\omega) = G_4^{-1}(\beta - \beta V(z_0,\omega)) = G_4^{-1}(\epsilon) > 0.
\end{align*}
The third equality in the first line of this display follows from the observation that
\[z_0(\omega)\in(\overline y_i(\beta + \epsilon,\omega),\underline x_{i+1}(\beta + \epsilon,\omega))\ \ \text{for some}\ i\in\mathbb{Z} \]
by \eqref{eq:ofkeyonet} since $\beta + \epsilon - \beta V(z_0,\omega) = 2\epsilon < m$.
It is clear from Condition \ref{cond:pm} that
\[ \beta - \epsilon \le G(p) + \beta V(z_0,\omega) = G(p) + \beta - \epsilon \le \beta + \epsilon \]
for every $p\in [G_3^{-1}(2\epsilon),G_4^{-1}(\epsilon)],$
i.e., $f_{4,1,3}^\beta(\cdot,\omega)$ satisfies \eqref{eq:vsol} at $x = z_0$ with $\lambda = \beta + \epsilon$ and $f_{1,3,4}^\beta(\cdot,\omega)$ satisfies \eqref{eq:vsag} at $x = z_0$ with $\lambda = \beta - \epsilon$. We deduce that $f_{4,1,3}^\beta(\cdot,\omega)$ is a subsolution of \eqref{eq:aux} with $\lambda = \beta + \epsilon$ and $f_{1,3,4}^\beta(\cdot,\omega)$ is a supersolution of \eqref{eq:aux} with $\lambda = \beta - \epsilon$.
Moreover,
\begin{align*}
	\lim_{x\to+\infty} \frac{f_{4,1,3}^\beta(x,\omega)}{x} &= \lim_{x\to-\infty} \frac{f_{1,3,4}^\beta(x,\omega)}{x} = \underline\theta_{1,3}(\beta + \epsilon) < 0\\
	&< \theta_4(\beta) = \lim_{x\to-\infty} \frac{f_{4,1,3}^\beta(x,\omega)}{x} = \lim_{x\to+\infty} \frac{f_{1,3,4}^\beta(x,\omega)}{x}
\end{align*}
for $\mathbb{P}$-a.e.\ $\omega$. Therefore,
\[ \Ham^U(\theta) \le \beta + \epsilon \ \ \text{and}\ \  \Ham^L(\theta) \ge \beta - \epsilon\ \ \text{for every}\ \theta\in(\underline\theta_{1,3}(\beta + \epsilon),\theta_4(\beta)) \]
by Proposition \ref{prop:onesided}. Since $\epsilon\in(0,m/2)$ is arbitrary, we recall \eqref{eq:chll} and conclude that
\[ \Ham(\theta) = \Ham^L(\theta) = \Ham^U(\theta) = \beta\ \ \text{for every}\ \theta\in[\theta_{1,3}(\beta),\theta_4(\beta)). \]

\subsection{Possible flat piece at height $m + \beta$ (excluding the easy subcase)}

We proved in Subsection \ref{sub:eflat} that, in Cases II and III (except their easy subcases),
\[ \Ham(\theta) = \Ham^L(\theta) = \Ham^U(\theta) = m + \beta\ \ \text{for every}\ \theta\in(\theta_1(m + \beta),\theta_{1,3}(m + \beta)]. \]
In words, the graph of $\Ham$ has a flat piece at height $m + \beta$ if and only if $\theta_1(m + \beta) < \theta_{1,3}(m + \beta)$. We will characterize this strict inequality in the proof of Theorem \ref{thm:ergthmapp} (see Section \ref{sec:newflat}).

\section{Completing the proof of homogenization}\label{sec:summary}

\begin{definition}\label{def:lamb}
	Recall \eqref{eq:ddnono}, \eqref{eq:hurma1}, \eqref{eq:hurma2}, \eqref{eq:chll} and \eqref{eq:nowaste}.
	\begin{itemize}
		\item [(a)] When $\beta < M = m + \beta$ (i.e., the easy subcase of medium potential), let
					\[ \overline\Lambda(\theta) = M\ \text{on}\ (\theta_1(M),\theta_3(M)).\]
		\item [(b)] When $\beta < M < m + \beta$ (i.e., medium potential, excluding the easy subcase above), let
					\[ \overline\Lambda(\theta) = \begin{cases} m + \beta\ \text{on}\ (\theta_1(m + \beta),\theta_{1,3}(m + \beta)]&\text{(possible flat piece)},\\
																\Lambda(\theta)\ \text{on}\ (\theta_{1,3}(m + \beta),\theta_{1,3}(M))&\text{(nonincreasing)},\\
																M\ \text{on}\ [\theta_{1,3}(M),\theta_3(M))&\text{(possible flat piece)}.
												  \end{cases} \]
		\item [(c)] When $M \le \beta = m + \beta$ (i.e., the easy subcase of strong potential), let
					\[ \overline\Lambda(\theta) = \beta\ \text{on}\ (\theta_1(\beta),\theta_4(\beta)). \]
		\item [(d)] When $M \le \beta  < m + \beta$ (i.e., strong potential, excluding the easy subcase above), let
				    \[ \overline\Lambda(\theta) = \begin{cases} m + \beta\ \text{on}\ (\theta_1(m + \beta),\theta_{1,3}(m + \beta)]&\text{(possible flat piece)},\\
																\Lambda(\theta)\ \text{on}\ (\theta_{1,3}(m + \beta),\theta_{1,3}(\beta))&\text{(nonincreasing)},\\
														  		\beta\ \text{on}\ [\theta_{1,3}(\beta),\theta_4(\beta))&\text{(flat piece)}.
												  \end{cases} \]
	\end{itemize}
\end{definition}

\begin{proof}[Proof of Theorem \ref{thm:velinim}]
	Under the assumptions in Theorem \ref{thm:velinim}, we showed in Sections \ref{sec:weak}, \ref{sec:medium} and \ref{sec:strong} that, in Cases I, II and III, respectively,
	\[ \Ham(\theta) = \Ham^L(\theta) = \Ham^U(\theta) \]
	for every $\theta\in\mathbb{R}$. Moreover, in each case, we verified that $\Ham$ has the piecewise description in the statement of Theorem \ref{thm:velinim} which, in Cases II and III, involves the function $\overline \Lambda$ in Definition \ref{def:lamb}. Therefore, the desired conclusions follow from Theorem \ref{thm:ottur}.
\end{proof}

\section{Flat pieces of the graph of $\overline\Lambda$ at its extreme values}\label{sec:newflat}

We break down the proof of Theorem \ref{thm:ergthmapp} into several lemmas involving the quantity
\[ q_1 = \mathbb{P}\left(V(x,\omega) = 1\ \text{for some}\ x\in[0,1]\right). \]

\begin{lemma}\label{lem:aynaer}
	Assume that $\max\{\beta,M\} < m + \beta$. Recall the bi-infinite sequence defined by \eqref{eq:cipayok}--\eqref{eq:cip2}. If $q_1 > 0$, then for every $\lambda\in[\beta,+\infty)\cap(M,m + \beta)$, there exist $K,\delta>0$ such that the event
	\[A_{\lambda',K,\delta} = \{ 0 \le \underline y_i(\lambda',\omega) < \underline y_i(\lambda',\omega) + \delta \le \overline x_{i+1}(\lambda',\omega) \le K\ \text{for some}\ i\in\mathbb{Z} \} \]
	satisfies $\mathbb{P}(A_{\lambda',K,\delta}) \ge \frac1{2}$ whenever $\lambda'\in(\lambda,m + \beta)$.
\end{lemma}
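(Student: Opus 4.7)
The plan is to define an event $C_{K, \delta}$ depending only on the potential $V$ (and not on $\lambda'$) such that $C_{K, \delta} \subseteq A_{\lambda', K, \delta}$ for every $\lambda' \in (\lambda, m+\beta)$, and then to establish $\mathbb{P}(C_{K, \delta}) \ge 1/2$ for $K$ large and $\delta$ small via standard ergodic-theoretic arguments. The event $C_{K, \delta}$ will encode a ``dip--stable peak--dip'' oscillation pattern of $V$ on $[0, K]$ that forces one cycle $[\underline y_i(\lambda', \omega), \overline x_{i+1}(\lambda', \omega))$ to lie inside $[0, K]$ with a waiting period of length at least $\delta$, uniformly in $\lambda' \in (\lambda, m+\beta)$.

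To begin, I would fix two small constants $\eta \in (0, \min\{(M-m)/(2\beta),\, 1/4\})$ and $\eta'' \in (0, \min\{(\lambda-M)/(2\beta),\, 1/4\})$, both nonempty since $m < M < \lambda$ and satisfying $\eta + \eta'' < 1$. Then I would set
\[ C_{K, \delta} := \bigl\{ \omega : \exists\, 0 \le z_0 < x_0 \text{ and } y_0 \in [x_0 + \delta, K]\ \text{with}\ V(z_0,\omega) < \eta'',\ V(x_0,\omega) = 1,\ V(\cdot,\omega) \ge 1 - \eta \text{ on } [x_0, x_0 + \delta],\ V(y_0,\omega) < \eta'' \bigr\}. \]
To verify the inclusion, fix $\omega \in C_{K,\delta}$ and $\lambda' \in (\lambda, m+\beta)$. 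The bound $V(z_0) < \eta'' < (\lambda - M)/\beta \le (\lambda' - M)/\beta$ gives $\lambda' - \beta V(z_0) > M > m$, so by \eqref{eq:ayakbas}, $z_0 \in [\overline x_k, \underline y_k)$ for some $k$, whence $\underline y_k > z_0 \ge 0$. The peak $V(x_0) = 1$ yields $\lambda' - \beta V(x_0) = \lambda' - \beta < m$, placing $x_0$ in some $[\underline y_{k'}, \overline x_{k'+1})$; monotonicity of the sequences combined with $x_0 > z_0 \ge \overline x_k$ forces $\underline y_{k'} \ge \underline y_k > 0$. On $[x_0, x_0 + \delta]$, the inequality $V \ge 1 - \eta$ together with $\eta < (M-m)/\beta$ and $\lambda' < m+\beta$ yields $\lambda' - \beta V \le \lambda' - \beta + \beta\eta < m + (M-m) = M$, so $\overline x_{k'+1} > x_0 + \delta \ge \underline y_{k'} + \delta$. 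Finally $V(y_0) < \eta''$ gives $\lambda' - \beta V(y_0) > M$, and since $V \ge 1 - \eta > \eta'' > V(y_0)$ on $[x_0, x_0+\delta]$ rules out $y_0 \in [x_0, x_0+\delta]$, we have $y_0 > x_0 + \delta > \underline y_{k'}$, hence $\overline x_{k'+1} \le y_0 \le K$. Taking $i = k'$ places $\omega$ in $A_{\lambda', K, \delta}$.

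For the probability estimate, set $C^*_\delta := \lim_{K \to \infty} C_{K, \delta}$. The plan is to show $\mathbb{P}(C^*_\delta) \to 1$ as $\delta \downarrow 0$. Since $q_1 > 0$ and the event $\{\exists x \in \mathbb{R},\ V(x,\omega) = 1\}$ is $\tau$-invariant, ergodicity gives it probability $1$; combined with stationarity, $\mathbb{P}(V \text{ attains } 1 \text{ in } [n, \infty)) = 1$ for every $n \in \mathbb{Z}$, and a countable intersection over $n$ yields almost-sure existence of peaks accumulating at $+\infty$. The same argument applied to dips $\{V < \eta''\}$ (using $\inf_x V = 0$ from \eqref{eq:evren} together with $\sigma$-additivity to deduce $\mathbb{P}(V < \eta'' \text{ in } [n,n+1]) > 0$) yields almost-sure dips accumulating at $+\infty$. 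Choosing the first dip $z_0$, the first peak $x_0 > z_0$, and the first dip $y_0 > x_0$ in $[0, \infty)$ produces an almost-sure finite triple $0 \le z_0 < x_0 < y_0 < \infty$. Uniform continuity of $V(\cdot, \omega)$ supplies a random $\delta_0(\omega) > 0$ with $V \ge 1 - \eta$ on $[x_0, x_0 + \delta_0]$; for any $\delta \le \min\{\delta_0(\omega),\, y_0 - x_0\}$, this triple certifies $\omega \in C^*_\delta$. Monotone convergence then gives $\mathbb{P}(C^*_\delta) \uparrow 1$ as $\delta \downarrow 0$; fixing $\delta$ small enough that $\mathbb{P}(C^*_\delta) \ge 3/4$ and then $K$ large enough yields $\mathbb{P}(C_{K, \delta}) \ge 1/2$.

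The main obstacle is reconciling the deterministic $\delta$ required by the definition of $A_{\lambda', K, \delta}$ with the $\omega$-dependent stability modulus coming from uniform continuity of $V(\cdot, \omega)$. This is what necessitates the two-stage monotone argument: first letting $K \to \infty$ and $\delta \downarrow 0$ to relax the stability requirement to a triviality (boosting the probability of the reference event to $1$), and only then fixing $K, \delta$ appropriately.
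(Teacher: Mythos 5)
Your proof is correct and takes essentially the same route as the paper: both construct a $\lambda'$-independent ``dip--peak with a $\delta$-stability window--dip'' event for the potential, use \eqref{eq:ayakbas} to trap an interval $[\underline y_i(\lambda',\omega),\overline x_{i+1}(\lambda',\omega))$ of length at least $\delta$ inside $[0,K]$ uniformly in $\lambda'\in(\lambda,m+\beta)$, and bound its probability via stationarity/ergodicity together with continuity of $V$ to obtain a deterministic $\delta$. The only difference is bookkeeping: the paper stations the three points in the windows $[0,K']$, $[K',2K']$, $[2K',3K']$ and uses a $\tfrac78$ union bound before shaving off $\tfrac18$ for the $\delta$-window, whereas you first send $K\to\infty$ and $\delta\downarrow0$ to reach probability one and then fix $\delta$ and $K$.
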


\begin{proof}
	For every $\lambda\in[\beta,+\infty)\cap(M,m + \beta)$, our ergodicity assumption ensures that
	\[ \mathbb{P}(\lambda - \beta V(x,\omega) > M\ \text{for some}\ x\ge0) = 1. \]
	Similarly, if $q_1>0$, then  $\mathbb{P}(V(x,\omega) = 1\ \text{for some}\ x\ge0) = 1$. Therefore, $\exists\,K'>0$ such that
	\[ \mathbb{P}(\lambda - \beta V(x,\omega) > M\ \text{for some}\ x\in[0,K']) \ge \frac{7}{8}\quad\text{and}\quad\mathbb{P}(V(x,\omega) = 1\ \text{for some}\ x\in[0,K']) \ge \frac{7}{8}. \]
	
	Consider the event
		\begin{align*}
		B_{\lambda,K'} &= \left\{\exists\,z_1,z_2,z_3\in\mathbb{R}\ \text{s.t.}\ 0\le z_1 \le K' \le z_2 \le 2K' \le z_3 \le 3K',\ \lambda - \beta V(z_1,\omega) > M, \right.\\
		&\quad\ \left. \, V(z_2,\omega) = 1\ \text{and}\ \lambda - \beta V(z_3,\omega) > M\right\}.
		\end{align*}
	It follows from \eqref{eq:stationary} and a union bound that
	\[ \mathbb{P}((B_{\lambda,K'})^c) \le (1 - \frac{7}{8}) + (1 - \frac{7}{8}) + (1 - \frac{7}{8}) = \frac{3}{8}. \]
	The continuity of the potential implies the existence of a $\delta\in(0,K')$ such that the event
	\begin{align*}
	B_{\lambda,K',\delta} &= \left\{ \exists\, z_1,z_2,z_3\in\mathbb{R}\ \text{s.t.}\ 0\le z_1 \le K' \le z_2 \le 2K' \le z_3 \le 3K',\ \lambda - \beta V(z_1,\omega) > M,\right.\\
	&\quad\ \,\left. V(z_2,\omega) = 1,\ m + \beta - \beta V(z,\omega)\le M\ \text{for all}\ z\in[z_2,z_2 + \delta]\ \text{and}\ \lambda - \beta V(z_3,\omega) > M\right\}
	\end{align*}
	satisfies
	\[\mathbb{P}(B_{\lambda,K',\delta}) \ge \mathbb{P}(B_{\lambda,K'}) - \frac1{8} \ge \frac1{2}.\]
	
	Whenever $\lambda'\in(\lambda,m + \beta)$ and $K \ge 3K'$, 
		\begin{align*}
		B_{\lambda,K',\delta} &\subset \left\{ \exists\, z_1,z_2,z_3\in\mathbb{R}\ \text{s.t.}\ 0\le z_1 \le K' \le z_2 \le 2K' \le z_3 \le 3K',\ \lambda' - \beta V(z_1,\omega) > M,\right.\\
		&\quad\ \left. \lambda' - \beta V(z_2,\omega) < m,\ \lambda' - \beta V(z,\omega) < M\ \text{for all}\ z\in[z_2,z_2 + \delta]\ \text{and}\ \lambda' - \beta V(z_3,\omega) > M\right\}\\
		&\subset \{ \exists\, z_1,z_2,z_3\in\mathbb{R}\ \text{and}\ i\in\mathbb{Z}\ \text{s.t.}\ 0 \le z_1 < \underline y_i(\lambda',\omega) < z_2 < z_2+\delta < \overline x_{i+1}(\lambda',\omega) < z_3 \le 3K'\}\\
		&\subset A_{\lambda',K,\delta}
		\end{align*}
		by \eqref{eq:ayakbas}. We conclude that $\mathbb{P}(A_{\lambda',K,\delta}) \ge \mathbb{P}(B_{\lambda,K',\delta}) \ge \frac1{2}$.
\end{proof}

\begin{lemma}\label{lem:oncelli}
	Assume that $\max\{\beta,M\} < m + \beta$. Recall the definitions in \eqref{eq:ddnono} and \eqref{eq:chll}. If $q_1 > 0$, then $\theta_1(m + \beta) < \theta_{1,3}(m + \beta)$.
\end{lemma}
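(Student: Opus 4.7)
The plan is to pass to the limit $\lambda'\uparrow m+\beta$ in \eqref{eq:hurma2} and compare against the formula \eqref{eq:ddnono} for $\theta_1(m+\beta)$. The key idea is that the difference $(\overline f_{1,3}^{\lambda'})'(x,\omega) - (f_1^{m+\beta})'(x,\omega)$ is nonnegative almost everywhere and carries a uniform positive gap on the ``$G_3$ regions'' $\bigcup_i(\underline y_i(\lambda',\omega),\overline x_{i+1}(\lambda',\omega))$. Lemma \ref{lem:aynaer} (which uses the hypothesis $q_1>0$) will then be used to show that $0$ lies in such a $G_3$ region with probability bounded below independently of $\lambda'$ sufficiently close to $m+\beta$.

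On any $G_1$ region $(\overline x_i(\lambda'),\underline y_i(\lambda'))$, the strict monotonicity of $G_1^{-1}$ combined with $\lambda'<m+\beta$ yields
\[ (\overline f_{1,3}^{\lambda'})'(x,\omega) = G_1^{-1}(\lambda' - \beta V(x,\omega)) \ge G_1^{-1}(m+\beta - \beta V(x,\omega)) = (f_1^{m+\beta})'(x,\omega). \]
On any $G_3$ region $(\underline y_i(\lambda'),\overline x_{i+1}(\lambda'))$, however, $(\overline f_{1,3}^{\lambda'})'(x,\omega) = G_3^{-1}(\lambda' - \beta V(x,\omega))\ge p_M$, whereas $(f_1^{m+\beta})'(x,\omega)$ always lies in $(-\infty,p_m]$, so the pointwise gap there is at least $p_M-p_m>0$. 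Writing $S(\lambda',\omega) = \bigcup_{i\in\mathbb{Z}}(\underline y_i(\lambda',\omega),\overline x_{i+1}(\lambda',\omega))$ and taking expectations in \eqref{eq:hurma2} and \eqref{eq:ddnono} (the boundary $\partial S$ being Lebesgue-null), I get
\[ \overline\theta_{1,3}(\lambda') - \theta_1(m+\beta) \ge (p_M - p_m)\,\mathbb{P}(0 \in S(\lambda',\omega)). \]

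To bound the right-hand side uniformly in $\lambda'$, I would fix any $\lambda_0 \in (\max\{\beta,M\},m+\beta)$ and invoke Lemma \ref{lem:aynaer} to obtain $K,\delta>0$ such that $\mathbb{P}(A_{\lambda',K,\delta}) \ge 1/2$ for every $\lambda'\in(\lambda_0,m+\beta)$. On $A_{\lambda',K,\delta}$, the Lebesgue measure of $S(\lambda',\omega)\cap[0,K]$ is at least $\delta$; combining this with the translation-covariance $S(\lambda',\tau_x\omega) = S(\lambda',\omega)-x$ (best checked via the anchoring-free description that $x\in S(\lambda',\omega)$ iff the maximal interval around $x$ on which $\lambda'-\beta V\le M$ meets the set $\{z:\lambda'-\beta V(z)\le m\}$) and Fubini gives
\[ K\,\mathbb{P}(0 \in S(\lambda',\omega)) = \mathbb{E}\bigl[\text{Leb}(S(\lambda',\omega)\cap[0,K])\bigr] \ge \delta\,\mathbb{P}(A_{\lambda',K,\delta}) \ge \delta/2. \]
Letting $\lambda'\uparrow m+\beta$ and recalling \eqref{eq:chll} (together with the left-continuity from Lemma \ref{lem:solsag}) yields $\theta_{1,3}(m+\beta) - \theta_1(m+\beta) \ge (p_M-p_m)\delta/(2K) > 0$. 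The only slightly delicate step I anticipate is the clean verification of translation-covariance of $S(\lambda',\cdot)$, which requires dispensing with the anchoring in \eqref{eq:cip2}; the characterization above takes care of this, and everything else reduces to bookkeeping.
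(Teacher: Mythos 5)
Your proposal is, in substance, the paper's own argument recast in expectation form: the same pointwise derivative gap of size $p_M-p_m$ on the intervals $(\underline y_i(\lambda'),\overline x_{i+1}(\lambda'))$, the same input from Lemma \ref{lem:aynaer} with the same $K,\delta$ uniform over $\lambda'\in(\lambda,m+\beta)$, and the same final constant $(p_M-p_m)\delta/(2K)$; the paper simply runs the estimate pathwise, applying the ergodic theorem to $\one_{\{\tau_x\omega\in A_{\lambda',K,\delta}\}}$ and counting good intervals in $[0,L+K]$, where you instead take expectations of $(\overline f_{1,3}^{\lambda'})'(0,\omega)-(f_1^{m+\beta})'(0,\omega)$ and use stationarity plus Fubini. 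That reformulation is legitimate and all the main estimates check out.

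The one step that does not survive scrutiny is exactly the one you flagged as delicate: your anchoring-free characterization of $S(\lambda',\omega)$ is false as stated. Membership of $x$ in $\bigcup_i(\underline y_i(\lambda',\omega),\overline x_{i+1}(\lambda',\omega))$ is not equivalent to ``the maximal interval around $x$ on which $\lambda'-\beta V\le M$ meets $\{z:\lambda'-\beta V(z)\le m\}$''. Consider a trajectory with $\lambda'-\beta V(z_1,\omega)>M$ at some $z_1<x$, values in $(m,M]$ on $(z_1,x]$, and a first drop to $\le m$ only at some $z_2>x$ before the next excursion above $M$; then $x$ lies in the up-phase $[\overline x_j(\lambda',\omega),\underline y_j(\lambda',\omega))$ with $\underline y_j(\lambda',\omega)=z_2$, so $x\notin S(\lambda',\omega)$, yet the maximal $\le M$ interval around $x$ reaches $z_2$ and hence meets $\{\le m\}$. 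The correct anchoring-free description must require the meeting point to lie at or to the left of $x$: $x\in S(\lambda',\omega)$ iff there exists $z\le x$ with $\lambda'-\beta V(z,\omega)\le m$ and $\lambda'-\beta V(\cdot,\omega)\le M$ on $[z,x]$. Alternatively, you can bypass the characterization entirely: the crossing recursions \eqref{eq:cipayok} depend only on the function $x\mapsto\lambda'-\beta V(x,\omega)$, and $V(\cdot,\tau_z\omega)=V(\cdot+z,\omega)$ changes only the indexing (anchoring), not the union of intervals, so $S(\lambda',\tau_z\omega)=S(\lambda',\omega)-z$; equivalently, the paper's identity $(\overline f_{1,3}^{\lambda'})_\pm'(x,\omega)=(\overline f_{1,3}^{\lambda'})_\pm'(0,\tau_x\omega)$ already gives the stationarity your Fubini step needs, since this derivative lies in $(-\infty,p_m)$ off $S$ and in $[p_M,0]$ on $S$. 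With that repair the proof is correct and matches the paper's bound.
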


\begin{proof}
	Fix a $\lambda\in[\beta,+\infty)\cap(M,m + \beta)$. If $q_1 > 0$, then the ergodic theorem and Lemma \ref{lem:aynaer} ensure the existence of $K,\delta>0$ such that
	\begin{equation}\label{eq:dedcruz}
		\lim_{L\to +\infty}\frac1{L}\int_0^L\one_{ \{ \tau_x\omega\in A_{\lambda',K,\delta} \} }dx = \mathbb{P}(A_{\lambda',K,\delta}) \ge \frac1{2}
	\end{equation}
	for $\mathbb{P}$-a.e.\ $\omega$ and every $\lambda'\in(\lambda,m + \beta)$. Let
	\[ I_{\lambda',\delta} = \{ i\in\mathbb{Z}:\,\underline y_i(\lambda',\omega) + \delta \le \overline x_{i+1}(\lambda',\omega) \} \]
	and observe that
	\begin{equation}
	\begin{aligned}\label{eq:nazseltan}
		\int_0^L\one_{ \{ \tau_x\omega\in A_{\lambda',K,\delta} \} }dx &= \int_0^L\one_{ \{ x \le \underline y_i(\lambda',\omega) \le \overline x_{i+1}(\lambda',\omega) \le x + K\ \text{for some}\ i\in I_{\lambda',\delta} \} }dx\\
		&\le \sum_{i\in I_{\lambda',\delta}}\int_0^L\one_{ \{ x \le \underline y_i(\lambda',\omega) \le \overline x_{i+1}(\lambda',\omega) \le x + K \} }dx\\
		&\le (K - \delta)\#\{i\in I_{\lambda',\delta}:\, 0 \le \underline y_i(\lambda',\omega) \le \overline x_{i+1}(\lambda',\omega) \le L + K \}.
	\end{aligned}
	\end{equation}
	
	Recall \eqref{eq:ezgi1} and \eqref{eq:anbizdik}. For every $\lambda'\in(\lambda,m + \beta)$, $x\in\mathbb{R}$ and $\mathbb{P}$-a.e.\ $\omega$,
	\[ (f_1^{m + \beta})'(x,\omega) = G_1^{-1}(m + \beta - \beta V(x,\omega)) < G_1^{-1}(\lambda' - \beta V(x,\omega)) \le (\overline f_{1,3}^{\lambda'})_\pm'(x,\omega). \]
	Moreover, on any interval of the form $(\underline y_i(\lambda',\omega), \overline x_{i+1}(\lambda',\omega))$,
	\[ (f_1^{m + \beta})'(x,\omega) = G_1^{-1}(m + \beta - \beta V(x,\omega)) \le p_m < p_M \le G_3^{-1}(\lambda' - \beta V(x,\omega)) = (\overline f_{1,3}^{\lambda'})'(x,\omega). \]
	Therefore,
	\begin{align*}
	&\overline\theta_{1,3}(\lambda') - \theta_1(m + \beta) = \lim_{L\to +\infty}\frac1{L}\left( \int_0^{L + K} (\overline f_{1,3}^{\lambda'})_\pm'(x,\omega)dx - \int_0^{L + K}(f_1^{m + \beta})'(x,\omega) dx \right)\\
	&\quad \ge \liminf_{L\to +\infty}\frac{(p_M - p_m)\delta}{L} \#\{i\in I_{\lambda',\delta}:\, 0 \le \underline y_i(\lambda',\omega) \le \overline x_{i+1}(\lambda',\omega) \le L + K \}\\
	&\quad \ge \lim_{L\to +\infty}\frac{(p_M - p_m)\delta}{(K - \delta)L}\int_0^L\one_{ \{ \tau_x\omega\in A_{\lambda',K,\delta} \} }dx \ge \frac{(p_M - p_m)\delta}{2K}
	\end{align*}
	by \eqref{eq:dedcruz} and \eqref{eq:nazseltan}. We send $\lambda'\uparrow m + \beta$ and conclude that
	\[ \theta_{1,3}(m + \beta) - \theta_1(m + \beta) \ge \frac{(p_M - p_m)\delta}{2K} > 0.\qedhere \]
\end{proof}

Here is the converse of Lemma \ref{lem:oncelli}.

\begin{lemma}\label{lem:vans}
	Assume that $\max\{\beta,M\} < m + \beta$. If $q_1 = 0$, then $\theta_1(m + \beta) = \theta_{1,3}(m + \beta)$.
\end{lemma}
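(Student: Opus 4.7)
The plan is to show $\overline\theta_{1,3}(\lambda')\to\theta_1(m+\beta)$ as $\lambda'\uparrow m+\beta$, which by \eqref{eq:chll} yields the claim. The starting observation is that the hypothesis $q_1=0$, together with stationarity of $V$ and countable subadditivity over the integer translates of $[0,1]$, gives $\mathbb{P}(V(x,\omega)=1\text{ for some }x\in\mathbb{R})=0$; hence $V(\cdot,\omega)<1$ identically on $\mathbb{R}$ for $\mathbb{P}$-a.e.\ $\omega$.

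Next I would reduce to an event probability estimate. Let $E_{\lambda'}\in\mathcal{F}$ denote the event that $0\in[\underline y_i(\lambda',\omega),\overline x_{i+1}(\lambda',\omega))$ for some $i\in\mathbb{Z}$, i.e.\ that $0$ lies in a ``high-$V$'' interval of the partition in \eqref{eq:cipayok}. Using \eqref{eq:ddnono} and \eqref{eq:anbizdik},
\begin{align*}
\overline\theta_{1,3}(\lambda')-\theta_1(m+\beta)&=\mathbb{E}\bigl[\bigl(G_1^{-1}(\lambda'-\beta V(0,\omega))-G_1^{-1}(m+\beta-\beta V(0,\omega))\bigr)\one_{E_{\lambda'}^c}\bigr]\\
&\quad+\mathbb{E}\bigl[\bigl(G_3^{-1}(\lambda'-\beta V(0,\omega))-G_1^{-1}(m+\beta-\beta V(0,\omega))\bigr)\one_{E_{\lambda'}}\bigr].
\end{align*}
On $E_{\lambda'}^c$ one has $V(0,\omega)<(\lambda'-m)/\beta\le 1$, so both arguments of $G_1^{-1}$ lie in $[m,m+\beta]$, and the first integrand vanishes uniformly in $\omega$ as $\lambda'\uparrow m+\beta$ by uniform continuity of $G_1^{-1}$ on that compact interval. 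The second integrand is uniformly bounded in absolute value by $|p_M|+|G_1^{-1}(m+\beta)|$. Hence the proof reduces to showing $\mathbb{P}(E_{\lambda'})\to 0$ as $\lambda'\uparrow m+\beta$.

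For this estimate, write $a=(\lambda'-M)/\beta$ and $b=(\lambda'-m)/\beta$, so that $a\uparrow a_0:=(m+\beta-M)/\beta\in(0,1)$ and $b\uparrow 1$. From \eqref{eq:cipayok}, the event $E_{\lambda'}$ implies that $V(0,\omega)\ge a$ and that the connected component of $0$ in $\{y\in\mathbb{R}:V(y,\omega)\ge a\}$ contains at least one point with $V\ge b$. The main obstacle is that this component itself varies with $\lambda'$; I would sidestep it by fixing any $a^*\in(0,a_0)$, so that for $\lambda'$ close enough to $m+\beta$ we have $a\ge a^*$, and the above component is contained in the $\lambda'$-independent envelope $C^*(\omega)$, defined as the component of $0$ in $\{y\in\mathbb{R}:V(y,\omega)\ge a^*\}$. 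By ergodicity, $\{V<a^*\}$ has positive density since $V$ is nonconstant with $\inf V=0$, so $C^*(\omega)$ is a.s.\ a bounded closed interval; and because $V(\cdot,\omega)<1$ everywhere a.s., its maximum $M^*(\omega):=\max_{C^*(\omega)}V(\cdot,\omega)$ satisfies $M^*(\omega)<1$ a.s. Therefore $\mathbb{P}(E_{\lambda'})\le\mathbb{P}(M^*(\omega)\ge b)\to\mathbb{P}(M^*(\omega)\ge 1)=0$ by dominated convergence as $b\uparrow 1$, completing the plan.
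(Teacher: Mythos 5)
Your proof is correct, but it takes a genuinely different route from the paper's. The paper fixes $\eta>0$ and argues through spatial ergodic averages: it introduces the event $A_{\lambda,K}$ at a fixed reference level $\lambda$, uses Lemma \ref{lem:interlace} to cover the level-$\lambda'$ intervals $[\underline y_i(\lambda'),\overline x_{i+1}(\lambda'))$ by level-$\lambda$ intervals, splits according to whether the enclosing interval has length at most $2K$, and controls the two contributions by the window estimate \eqref{eq:karayk} and by $\mathbb{P}((A_{\lambda,K})^c)<\eta$, ending with the quantitative bound $0\le\theta_{1,3}(m+\beta)-\theta_1(m+\beta)\le(3C+1)\eta$. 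You instead exploit the one-point representations $\overline\theta_{1,3}(\lambda')=\mathbb{E}[(\overline f_{1,3}^{\lambda'})'(0,\omega)]$ and $\theta_1(m+\beta)=\mathbb{E}[G_1^{-1}(m+\beta-\beta V(0,\omega))]$ from \eqref{eq:hurma2} and \eqref{eq:ddnono}, reduce everything to $\mathbb{P}(E_{\lambda'})\to0$, and prove that by a soft argument: the $\lambda'$-dependent high interval containing $0$ sits inside the fixed envelope $C^*(\omega)$ (the component of $0$ in $\{V\ge a^*\}$), which is a.s.\ a compact interval on which $\max V<1$ because $q_1=0$, so $\mathbb{P}(E_{\lambda'})\le\mathbb{P}(M^*\ge b)\to0$ by continuity of measure as $b\uparrow1$. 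This avoids the covering lemma and the counting/density estimates entirely and is shorter and softer, at the price of giving no explicit rate, while the paper's argument is fully quantitative in $\eta$. Two small points you should make explicit if you write this up: (i) the pointwise use of \eqref{eq:anbizdik} at $x=0$ requires that a.s.\ $0$ is not one of the countably many crossing points $\overline x_i(\lambda',\omega),\underline y_i(\lambda',\omega)$, which follows from stationarity and Fubini (this is the same convention already implicit in \eqref{eq:hurma1}--\eqref{eq:hurma2}); and (ii) the boundedness of $C^*(\omega)$ needs points with $V<a^*$ on both sides of the origin, which you get from $\mathbb{P}(\inf V=0)=1$ in \eqref{eq:evren}, stationarity and the ergodic theorem, and the measurability of $M^*$ is routine via the hitting times defining the endpoints of $C^*$. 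Neither issue is a gap; the argument stands.
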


\begin{proof}
	Fix a $\lambda\in[\beta,+\infty)\cap(M,m + \beta)$. For every $\eta>0$, the event
	\[A_{\lambda,K} = \{ 0\le \overline y_j(\lambda,\omega) \le \underline x_{j+1}(\lambda,\omega) \le K\ \text{for some}\ j\in\mathbb{Z} \} \]
	satisfies
	\begin{equation}\label{eq:hidden}
		\mathbb{P}(A_{\lambda,K}) > 1 - \eta
	\end{equation}
	for sufficiently large $K > 0$. If $q_1 = 0$, then
	\begin{equation}\label{eq:karayk}
		\mathbb{P}(\lambda' - \beta V(y,\omega) \le m\ \text{for some}\ y\in[-2K,2K]) < \eta
	\end{equation}
	by continuity when $\lambda'\in(\lambda,m + \beta)$ is sufficiently large. 
	
	Recall \eqref{eq:ezgi1} and \eqref{eq:anbizdik}. Observe that, on any interval of the form $(\overline x_i(\lambda',\omega),\underline y_i(\lambda',\omega))$,
	\[ (\overline f_{1,3}^{\lambda'})'(x,\omega) - (f_1^{m + \beta})'(x,\omega) \le \max\{G_1^{-1}(\lambda' - \alpha) - G_1^{-1}(m + \beta - \alpha):\,0 \le \alpha \le \beta\} < \eta \]
	when $\lambda'\in(\lambda,m + \beta)$ is sufficiently large. Similarly, on any interval of the form $(\underline y_i(\lambda',\omega), \overline x_{i+1}(\lambda',\omega))$,
	\[ (\overline f_{1,3}^{\lambda'})'(x,\omega) - (f_1^{m + \beta})'(x,\omega) \le G_3^{-1}(\lambda' - \beta) - G_1^{-1}(m + \beta) \le G_3^{-1}(m) - G_1^{-1}(m + \beta) + \eta \]
	when $\lambda'\in(\lambda,m + \beta)$ is sufficiently large. If we let $C = G_3^{-1}(m) - G_1^{-1}(m + \beta)$, then
	\begin{align}
		0 &\le \theta_{1,3}(m + \beta) - \theta_1(m + \beta) \le \overline\theta_{1,3}(\lambda') - \theta_1(m + \beta)\nonumber\\
		&= \lim_{L\to +\infty}\frac1{L}\left( \int_0^L (\overline f_{1,3}^{\lambda'})_\pm'(x,\omega)dx - \int_0^L(f_1^{m + \beta})'(x,\omega) dx \right)\nonumber\\
		&\le \eta + \lim_{L\to +\infty}\frac{C}{L}\int_0^L\one_{\{ \underline y_i(\lambda',\omega) \le x < \overline x_{i+1}(\lambda',\omega)\ \text{for some}\ i\in\mathbb{Z} \}}dx\nonumber\\
		&\le \eta + \lim_{L\to +\infty}\frac{C}{L}\int_0^L\one_{\{ \overline y_j(\lambda,\omega) \le x < \underline x_{j+1}(\lambda,\omega)\ \text{for some}\ j\in J_{\lambda,\lambda'} \}}dx\label{eq:juanc}\\
		&= \eta + \lim_{L\to +\infty}\sum_{j\in J_{\lambda,\lambda'}}\frac{C}{L}\int_0^L\one_{\{ \overline y_j(\lambda,\omega) \le x < \underline x_{j+1}(\lambda,\omega) \}}dx\nonumber\\
		&\le \eta + \lim_{L\to +\infty}\sum_{j\in J_{\lambda,\lambda'}\cap N_{2K}}\frac{C}{L}\int_0^L\one_{\{ \overline y_j(\lambda,\omega) \le x < \underline x_{j+1}(\lambda,\omega) \}}dx\label{eq:firstterm}\\
		&\quad\ \  + \lim_{L\to +\infty}\sum_{j\in (N_{2K})^c}\frac{C}{L}\int_0^L\one_{\{ \overline y_j(\lambda,\omega) \le x < \underline x_{j+1}(\lambda,\omega) \}}dx.\label{eq:secondterm}
	\end{align}
Here, \eqref{eq:juanc} follows from Lemma \ref{lem:interlace} with $\lambda_1 = \lambda$ and $\lambda_2 = \lambda'$,
\[ J_{\lambda,\lambda'} = \{ j\in\mathbb{Z}:\, \lambda' - \beta V(y,\omega) \le m\ \text{for some}\ y\in[\overline y_j(\lambda,\omega),\underline x_{j+1}(\lambda,\omega)) \} \]
and
\[ N_{2K} = \{ j\in\mathbb{Z}:\, \underline x_{j+1}(\lambda,\omega) - \overline y_j(\lambda,\omega) \le 2K \}. \]
The limit in \eqref{eq:firstterm} is controlled as follows:
\begin{align*}
	&\quad\, \lim_{L\to +\infty}\sum_{j\in J_{\lambda,\lambda'}\cap N_{2K}}\frac{C}{L}\int_0^L\one_{\{ \overline y_j(\lambda,\omega) \le x < \underline x_{j+1}(\lambda,\omega) \}}dx\\
	&= \lim_{L\to +\infty}\frac{C}{L}\int_0^L\one_{\{ \overline y_j(\lambda,\omega) \le x < \underline x_{j+1}(\lambda,\omega)\ \text{for some}\ j\in J_{\lambda,\lambda'}\cap N_{2K} \}}dx\\
	&\le \lim_{L\to +\infty}\frac{C}{L}\int_0^L\one_{\{ \lambda' - \beta V(y,\omega)\le m\ \text{for some}\ y\in[x-2K,x+2K] \}}dx\\
	&= C\,\mathbb{P}(\lambda' - \beta V(y,\omega)\le m\ \text{for some}\ y\in[-2K,2K]) < C\eta
\end{align*}
by \eqref{eq:karayk} and the ergodic theorem. To control the limit in \eqref{eq:secondterm}, observe that
\begin{align*}
	\int_0^L\one_{\{ \tau_x\omega\in (A_{\lambda,K})^c \}}dx &=\int_0^L\one_{\{ \text{There is no $j\in\mathbb{Z}$ such that}\ x\le \overline y_j(\lambda,\omega) \le \underline x_{j+1}(\lambda,\omega) \le x + K \}}dx\\
	&\ge \sum_{j\in (N_{2K})^c}\left( \left| [\overline y_j(\lambda,\omega), \underline x_{j+1}(\lambda,\omega))\cap[0,L] \right| - K\right)\times\one_{\{\underline x_{j+1}(\lambda,\omega)\ge 2K,\ \overline y_j(\lambda,\omega)\le L-2K \}}\\
	&\ge \sum_{j\in (N_{2K})^c}\frac1{2}\left| [\overline y_j(\lambda,\omega), \underline x_{j+1}(\lambda,\omega))\cap[0,L] \right|\times\one_{\{\underline x_{j+1}(\lambda,\omega)\ge 2K,\ \overline y_j(\lambda,\omega)\le L-2K \}}\\
	&\ge \frac1{2}\int_{2K}^{L-2K}\one_{\{ \overline y_j(\lambda,\omega) \le x < \underline x_{j+1}(\lambda,\omega)\ \text{for some}\ j\in (N_{2K})^c \}}dx\\
	&\ge \frac1{2}\int_0^L\one_{\{ \overline y_j(\lambda,\omega) \le x < \underline x_{j+1}(\lambda,\omega)\ \text{for some}\ j\in (N_{2K})^c \}}dx - 2K
\end{align*}
for $L \ge 4K$. Therefore,
\begin{align*}
\lim_{L\to+\infty}\sum_{j\in (N_{2K})^c}\frac{C}{L}\int_0^L\one_{\{ \overline y_j(\lambda,\omega) \le x < \underline x_{j+1}(\lambda,\omega) \}}dx 
	&\le \lim_{L\to +\infty}\frac{2C}{L} \int_0^L\one_{\{ \tau_x\omega\in (A_{\lambda,K})^c \}}dx\\
	&= 2C\,\mathbb{P}((A_{\lambda,K})^c) < 2C\eta
\end{align*}
by \eqref{eq:hidden} and the ergodic theorem. We plug these bounds in \eqref{eq:firstterm}--\eqref{eq:secondterm} and deduce that
\[ 0 \le \theta_{1,3}(m + \beta) - \theta_1(m + \beta) \le (3C+1)\eta. \]
Since $\eta>0$ is arbitrary, we conclude that $\theta_1(m + \beta) = \theta_{1,3}(m + \beta)$.
\end{proof}

\begin{proof}[Proof of Theorem \ref{thm:ergthmapp}]
		We treat the four parts of the theorem in order.
		\begin{itemize}
			\item [(a)] When $\beta < M = m + \beta$, it follows readily from Theorem \ref{thm:velinim} and Definition \ref{def:lamb}(a) that
						\[ \Ham(\theta) = M\ \text{on}\ [\theta_1(M),\theta_3(M)]. \]
						
			\item [(b)] When $\beta < M < m + \beta$, it follows from Theorem \ref{thm:velinim} and Definition \ref{def:lamb}(b) that the graph of $\Ham$ has flat pieces at heights $M$ and $m + \beta$ iff
			\begin{equation}\label{eq:spasib}
				\theta_{1,3}(M) < \theta_3(M) \quad \text{and} \quad \theta_1(m + \beta) < \theta_{1,3}(m + \beta),
			\end{equation}
			respectively. The second inequality in \eqref{eq:spasib} is equivalent to $q_1 >0$ by Lemmas \ref{lem:oncelli} and \ref{lem:vans}, which is in turn equivalent to \eqref{eq:ezgiha} by ergodicity. This proves item (ii). Similarly, the proof of item (i) reduces to showing that the first inequality in \eqref{eq:spasib} is equivalent to
			\[ q_0 = \mathbb{P}\left(V(x,\omega) = 0\ \text{for some}\ x\in[0,1]\right) > 0. \]
			This is a routine adaptation of Lemmas \ref{lem:aynaer}--\ref{lem:vans}. We leave the details to the reader.
		
			\item [(c)] When $M \le \beta = m + \beta$, it follows readily from Theorem \ref{thm:velinim} and Definition \ref{def:lamb}(c) that
			\[ \Ham(\theta) = \beta\ \text{on}\ [\theta_1(\beta),\theta_4(\beta)]. \]
			
			\item [(d)] When $M \le \beta  < m + \beta$, it follows from Theorem \ref{thm:velinim} and Definition \ref{def:lamb}(d) that the graph of $\Ham$ has flat pieces at heights $\beta$ and $m + \beta$ iff
			\begin{equation}\label{eq:nazdar}
				\theta_{1,3}(\beta) < \theta_4(\beta) \quad \text{and} \quad \theta_1(m + \beta) < \theta_{1,3}(m + \beta),
			\end{equation}
			respectively. The second inequality in \eqref{eq:nazdar} is equivalent to \eqref{eq:ezgiha} precisely as in part (b). This proves item (ii). The proof of item (i) is easier since the first inequality in \eqref{eq:nazdar} always holds. Indeed,
			\[ \theta_{1,3}(\beta) \le \theta_{3}(\beta) < 0 < \theta_4(\beta).\qedhere \]
		\end{itemize}
\end{proof}

\section{Flat pieces of the graph of $\overline\Lambda$ at its intermediate values}\label{sec:interflat}

We break down the proof of Theorem \ref{thm:intermed} into several lemmas.

\begin{lemma}\label{lem:adqat}
	Assume that $\max\{\beta,M\} < m + \beta$. For any $\lambda\in(\max\{\beta,M\},m + \beta)$, if
	\begin{equation}\label{eq:aser}
		\mathbb{P}(\exists\,j\in\mathbb{Z}\ \text{s.t.}\ \underline x_0(\lambda,\omega) = \overline x_j(\lambda,\omega)) = 1,
	\end{equation}
	then
	$\mathbb{P}(\forall\,i\in\mathbb{Z}\ \exists\,j\in\mathbb{Z}\ \text{s.t.}\ \underline x_i(\lambda,\omega) = \overline x_j(\lambda,\omega)) = 1$.
	Similarly, if
	\[ \mathbb{P}(\exists\,k\in\mathbb{Z}\ \text{s.t.}\ \underline y_0(\lambda,\omega) = \overline y_k(\lambda,\omega)) = 1, \]
	then
	$\mathbb{P}(\forall\,i\in\mathbb{Z}\ \exists\,k\in\mathbb{Z}\ \text{s.t.}\ \underline y_i(\lambda,\omega) = \overline y_k(\lambda,\omega)) = 1$.
\end{lemma}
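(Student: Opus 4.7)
The plan is to exploit stationarity of $V$ under the shifts $\tau_y$, combined with a countable-intersection trick over rationals, to upgrade the ``$i=0$'' hypothesis to the ``for all $i$'' conclusion. I will describe the argument for the first assertion in detail; the $(\underline y,\overline y)$-statement is entirely analogous.

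Write $\mathcal{X}(\omega) = \{\underline x_i(\lambda,\omega):i\in\mathbb{Z}\}$ and $\mathcal{Y}(\omega) = \{\overline x_j(\lambda,\omega):j\in\mathbb{Z}\}$. Since $V(x,\tau_y\omega) = V(x+y,\omega)$ by \eqref{eq:stationary} and the sequences in \eqref{eq:noanchor}--\eqref{eq:cipayok} are defined by first-entry constructions, one reads off the shift covariance $\mathcal{X}(\tau_y\omega) = \mathcal{X}(\omega)-y$ and $\mathcal{Y}(\tau_y\omega) = \mathcal{Y}(\omega)-y$ for every $y\in\mathbb{R}$ and $\omega\in\Omega_0$. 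The anchoring \eqref{eq:cip1} then forces $\underline x_0(\lambda,\tau_y\omega) = P(y,\omega)-y$, where
\[ P(y,\omega) := \min\{\underline x_k(\lambda,\omega) : \underline x_k(\lambda,\omega) > y\}. \]
Consequently, the pullback of the hypothesis event under $\tau_y$ takes the form
\[ \tau_y^{-1}\{\omega : \underline x_0(\lambda,\omega)\in\mathcal{Y}(\omega)\} = \{\omega : P(y,\omega)\in\mathcal{Y}(\omega)\}, \]
and since $\tau_y$ preserves $\mathbb{P}$, the hypothesis \eqref{eq:aser} gives $\mathbb{P}(\{P(y,\cdot)\in\mathcal{Y}(\cdot)\}) = 1$ for every fixed $y\in\mathbb{R}$.

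Next I will intersect over $y\in\mathbb{Q}$ to produce a single full-probability event $\Omega_1\subseteq\Omega_0$ on which $P(y,\omega)\in\mathcal{Y}(\omega)$ for all rational $y$ simultaneously. For any $\omega\in\Omega_0$ and $i\in\mathbb{Z}$, continuity of $V(\cdot,\omega)$ (see \eqref{eq:bucon}) applied at $\underline x_{i-1}(\lambda,\omega)$, where $\lambda-\beta V = M > m$, forces $\overline y_{i-1}(\lambda,\omega) > \underline x_{i-1}(\lambda,\omega)$ via \eqref{eq:noanchor}, hence $\underline x_i(\lambda,\omega) > \underline x_{i-1}(\lambda,\omega)$ strictly. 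Thus $[\underline x_{i-1}(\lambda,\omega),\underline x_i(\lambda,\omega))$ is non-degenerate, contains a rational $y_i = y_i(\omega)$, and by definition $P(y_i,\omega) = \underline x_i(\lambda,\omega)$. On $\Omega_1$ this yields $\underline x_i(\lambda,\omega)\in\mathcal{Y}(\omega)$ for every $i\in\mathbb{Z}$, which is exactly the desired conclusion.

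For the second assertion, the anchoring \eqref{eq:cip2} analogously produces $\underline y_0(\lambda,\tau_y\omega) = Q(y,\omega)-y$, where $Q(y,\omega)$ is the $\underline y_{k(y)}(\lambda,\omega)$ attached by the recursion \eqref{eq:cipayok} to the smallest $\overline x_\ell(\lambda,\omega)\ge y$; as $y$ ranges over $\mathbb{Q}$, $Q(y,\omega)$ in turn visits every $\underline y_k(\lambda,\omega)$, and the rational-intersection argument then runs identically. The main obstacle I anticipate is purely bookkeeping: carefully tracking the re-indexing imposed by the two distinct anchoring conventions \eqref{eq:cip1} and \eqref{eq:cip2} when passing between $\omega$ and $\tau_y\omega$. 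All of the probabilistic content is concentrated in the single use of shift-invariance, and everything else is continuity plus a rational density argument.
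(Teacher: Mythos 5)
Your proposal is correct and follows essentially the same route as the paper: your $P(y,\omega)$ is precisely the paper's first-crossing function $\hat x_5(\lambda,\omega,y)$, your pullback-by-$\tau_y$ step is the paper's full-measure events $\hat\Omega_z$ obtained from shift-invariance, and your choice of a rational $y_i\in[\underline x_{i-1}(\lambda,\omega),\underline x_i(\lambda,\omega))$ plays the role of the paper's events $B_{\lambda,i,z}$ with $z\in\mathbb{Q}$. The only difference is organizational (you invoke shift covariance of the crossing-point sets directly rather than through the auxiliary quantities $\hat x_1,\ldots,\hat x_6$), not conceptual.
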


\begin{proof}
	Fix any $\lambda\in(\max\{\beta,M\},m + \beta)$. For every $z\in\mathbb{R}$ and $\omega\in\Omega_0$ (defined in \eqref{eq:sopra}), let
	\begin{equation}\label{eq:sapkadev}
	\begin{aligned}
		\hat x_1(\lambda,\omega,z) &= \sup\{x\le z:\,\lambda - \beta V(x,\omega) \ge M \} \le z,\\
		\hat x_2(\lambda,\omega,z) &= \sup\{x\le \hat x_1(\lambda,\omega,z):\,\lambda - \beta V(x,\omega) < m \} < \hat x_1(\lambda,\omega,z),\\
		\hat x_3(\lambda,\omega,z) &= \inf\{x\ge \hat x_2(\lambda,\omega,z):\,\lambda - \beta V(x,\omega) \ge M \} \in(\hat x_2(\lambda,\omega,z),\hat x_1(\lambda,\omega,z)],\\
		\hat x_4(\lambda,\omega,z) &= \inf\{x\ge \hat x_3(\lambda,\omega,z):\,\lambda - \beta V(x,\omega) < m \} > \hat x_1(\lambda,\omega,z),\\
		\hat x_5(\lambda,\omega,z) &= \inf\{x\ge \hat x_4(\lambda,\omega,z):\,\lambda - \beta V(x,\omega) \ge M \} > z\quad\text{and}\\
		\hat x_6(\lambda,\omega,z) &= \inf\{x\ge \hat x_5(\lambda,\omega,z):\,\lambda - \beta V(x,\omega) > M \} \ge \hat x_5(\lambda,\omega,z).
	\end{aligned}
	\end{equation}
	The inequalities and inclusions that are stated above regarding these quantities can be easily checked by considering the cases $\lambda - \beta V(z,\omega) < M$ and $\lambda - \beta V(z,\omega) \ge M$.
	Note also that they enjoy the following property which is due to stationarity:
	\begin{equation}\label{eq:statgibi}
		\hat x_\ell(\lambda,\omega,z) = \hat x_\ell(\lambda,\tau_z\omega,0) + z \ \ \text{for every}\ \ell\in\{1,2,3,4,5,6\}\ \text{and}\ z\in\mathbb{R}.
	\end{equation}
	
	It follows from the coupled recursion in \eqref{eq:noanchor} and the anchoring condition in \eqref{eq:cip1} that
	\[ \underline x_{-1}(\lambda,\omega) = \hat x_3(\lambda,\omega,0),\quad \overline y_{-1}(\lambda,\omega) = \hat x_4(\lambda,\omega,0)\quad\text{and}\quad\underline x_0(\lambda,\omega) = \hat x_5(\lambda,\omega,0). \]
	The last equality generalizes: For every $i\in\mathbb{Z}$ and $z\in\mathbb{R}$,
	\begin{equation}\label{eq:vjpin}
		B_{\lambda,i,z} := \{\omega\in\Omega_0:\,\underline x_i(\lambda,\omega) = \hat x_5(\lambda,\omega,z) \} = \{\omega\in\Omega_0:\,\underline x_{i-1}(\lambda,\omega)\le z < \underline x_i(\lambda,\omega) \}.
	\end{equation}
	It is clear from the second set representation of $B_{\lambda,i,z}$ that
	\begin{equation}\label{eq:o1visa}
		\mathbb{P}(\bigcup_{z\in\mathbb{Q}}B_{\lambda,i,z}) = 1\ \ \text{for every $i\in\mathbb{Z}$}.
	\end{equation}
		
	Recall from Lemma \ref{lem:interpm} that, for every $i\in\mathbb{Z}$ and $\omega\in\Omega_0$, there exists a $j\in\mathbb{Z}$ such that
	\[ \underline y_{j-1}(\lambda,\omega) < \underline x_i(\lambda,\omega) \le \overline x_j(\lambda,\omega). \]
	It follows easily that
	\begin{equation}\label{eq:dizman}
		B_{\lambda,i,z} \cap \{ \exists\,j\in\mathbb{Z}\ \text{s.t.}\ \underline x_i(\lambda,\omega) = \overline x_j(\lambda,\omega) \}  = B_{\lambda,i,z} \cap \{ \hat x_5(\lambda,\omega,z) = \hat x_6(\lambda,\omega,z) \}
	\end{equation}
	for every $i\in\mathbb{Z}$ and $z\in\mathbb{R}$.
	
	Finally, suppose \eqref{eq:aser} is true. For every $z\in\mathbb{R}$, define
	\[ \hat\Omega_z = \{\omega\in\Omega_0:\, \hat x_5(\lambda,\omega,z) = \hat x_6(\lambda,\omega,z) \} = \{\omega\in\Omega_0:\, \hat x_5(\lambda,\tau_z\omega,0) = \hat x_6(\lambda,\tau_z\omega,0) \}, \]
	where the second equality is due to \eqref{eq:statgibi}. Since $\mathbb{P}$ is invariant under $\tau_z$ and $\mathbb{P}(B_{\lambda,0,0}) = 1$ by \eqref{eq:cip1}, we use the set equality in \eqref{eq:dizman} to deduce that
	\begin{align*}
		\mathbb{P}(\hat\Omega_z) &= \mathbb{P}(\hat x_5(\lambda,\tau_z\omega,0) = \hat x_6(\lambda,\tau_z\omega,0)) = \mathbb{P}(\hat x_5(\lambda,\omega,0) = \hat x_6(\lambda,\omega,0))\\
		&= \mathbb{P}(B_{\lambda,0,0}\cap\{ \hat x_5(\lambda,\omega,0) = \hat x_6(\lambda,\omega,0) \}) = \mathbb{P}(B_{\lambda,0,0}\cap\{\exists\,j\in\mathbb{Z}\ \text{s.t.}\ \underline x_0(\lambda,\omega) = \overline x_j(\lambda,\omega)\})\\
		&= \mathbb{P}(\exists\,j\in\mathbb{Z}\ \text{s.t.}\ \underline x_0(\lambda,\omega) = \overline x_j(\lambda,\omega)) = 1.
	\end{align*}
	Therefore, 
	\begin{align*}
		 &\mathbb{P}\left(\forall\,i\in\mathbb{Z}\ \exists\,j\in\mathbb{Z}\ \text{s.t.}\ \underline x_i(\lambda,\omega) = \overline x_j(\lambda,\omega)\right)\\
		=\,&\mathbb{P}\left(\bigcap_{i\in\mathbb{Z}}\bigcup_{z\in\mathbb{Q}}B_{\lambda,i,z}\cap\{\exists\,j\in\mathbb{Z}\ \text{s.t.}\ \underline x_i(\lambda,\omega) = \overline x_j(\lambda,\omega)\}\right)\\
		=\,&\mathbb{P}\left(\bigcap_{i\in\mathbb{Z}}\bigcup_{z\in\mathbb{Q}}B_{\lambda,i,z}\cap\{ \hat x_5(\lambda,\omega,z) = \hat x_6(\lambda,\omega,z) \}\right)\\
		=\,&\mathbb{P}\left(\bigcap_{i\in\mathbb{Z}}\bigcup_{z\in\mathbb{Q}}B_{\lambda,i,z}\cap\hat\Omega_z\right) 
		= \mathbb{P}\left(\bigcap_{i\in\mathbb{Z}}\bigcup_{z\in\mathbb{Q}}B_{\lambda,i,z}\right) = 1
	\end{align*}
	by \eqref{eq:o1visa} and \eqref{eq:dizman}. The second implication in the statement of the lemma is proved similarly.
\end{proof}

\begin{lemma}\label{lem:mex1}
	Assume that $\max\{\beta,M\} < m + \beta$. For any $\lambda\in(\max\{\beta,M\},m + \beta)$, if
	\begin{equation}\label{eq:koktengri}
		\mathbb{P}(\forall\,i\in\mathbb{Z}\ \exists\,j,k\in\mathbb{Z}\ \text{s.t.}\ \underline x_i(\lambda,\omega) = \overline x_j(\lambda,\omega)\ \text{and}\ \underline y_i(\lambda,\omega) = \overline y_k(\lambda,\omega)) = 1,
	\end{equation}
	then
	\begin{equation}\label{eq:nikod}
		\mathbb{P}(\exists\,s\in\{0,1\}\ \text{s.t.}\ (\underline x_i(\lambda,\omega))_{i\in\mathbb{Z}} = (\overline x_{i+s}(\lambda,\omega))_{i\in\mathbb{Z}}\ \text{and}\ (\overline y_i(\lambda,\omega))_{i\in\mathbb{Z}} = (\underline y_{i+s}(\lambda,\omega))_{i\in\mathbb{Z}}) = 1.
	\end{equation}
	Moreover,
	\[ s = s(\omega) = \begin{cases} 0 &\text{if}\ \ \overline x_0(\lambda,\omega) > 0,\\ 1 &\text{if}\ \ \overline x_0(\lambda,\omega) = 0. \end{cases} \]
\end{lemma}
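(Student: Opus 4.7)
The plan is to derive \eqref{eq:nikod} from \eqref{eq:koktengri} in three movements: first show that the two strictly increasing sequences $(\underline x_i)_{i\in\mathbb{Z}}$ and $(\overline x_j)_{j\in\mathbb{Z}}$ enumerate exactly the same subset of $\mathbb{R}$ (and similarly for $(\overline y_i)$ and $(\underline y_k)$); next, exploit that fact to produce an order-preserving bijection of $\mathbb{Z}$, which must be a translation $i\mapsto i+s$; finally, pin down $s\in\{0,1\}$ using the anchoring conditions \eqref{eq:cip1}--\eqref{eq:cip2}.

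For the set equality, one inclusion $\{\underline x_i\}\subset\{\overline x_j\}$ is exactly \eqref{eq:koktengri}. For the converse, fix $j$ and invoke Lemma \ref{lem:interpm} to find $i$ with $[\overline x_j,\underline y_j)\subset[\underline x_i,\overline y_i)$; in particular $\underline x_i\le\overline x_j$ and $\overline x_j<\overline y_i$. By \eqref{eq:koktengri}, $\underline x_i=\overline x_{j^*}$ for some $j^*$, and necessarily $j^*\le j$. To rule out $j^*<j$, observe that then $\underline y_{j^*}\in(\overline x_{j^*},\overline x_{j^*+1})\subset(\underline x_i,\overline y_i)$, where \eqref{eq:ofkeyonet} forces $\lambda-\beta V(\underline y_{j^*},\omega)\ge m$; combined with $\lambda-\beta V(\underline y_{j^*},\omega)\le m$ from its definition, this gives equality, so by the second half of \eqref{eq:koktengri} we have $\underline y_{j^*}=\overline y_k$ for some $k$. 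But then $\overline y_k$ sits strictly inside $(\underline x_i,\overline y_i)$, contradicting the strict monotonicity of $(\overline y_\ell)_\ell$ together with $\overline y_{i-1}<\underline x_i$. Hence $j^*=j$ and $\overline x_j=\underline x_i$; the companion identity $\{\overline y_i\}=\{\underline y_k\}$ follows by the symmetric argument (using the second half of Lemma \ref{lem:interpm}, \eqref{eq:ayakbas}, and the first half of \eqref{eq:koktengri}).

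Writing $\underline x_i=\overline x_{\sigma(i)}$, the map $\sigma\colon\mathbb{Z}\to\mathbb{Z}$ is an order-preserving bijection, hence $\sigma(i)=i+s$ for a fixed $s\in\mathbb{Z}$. Because each interval $(\underline x_i,\underline x_{i+1})$ contains a unique element of $(\overline y_\ell)_\ell$, namely $\overline y_i$ (by \eqref{eq:ofkeyonet}), and each $(\overline x_j,\overline x_{j+1})$ contains a unique element of $(\underline y_\ell)_\ell$, namely $\underline y_j$, the relation $\underline x_i=\overline x_{i+s}$ automatically forces $\overline y_i=\underline y_{i+s}$ with the same shift. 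To identify $s$: if $\overline x_0>0$, then $\overline x_0$ is the smallest strictly positive element of $\{\overline x_j\}$ and $\underline x_0$ is the smallest strictly positive element of $\{\underline x_i\}$, so set equality yields $\underline x_0=\overline x_0$ and $s=0$; if instead $\overline x_0=0$, then by set equality $0=\overline x_0\in\{\underline x_i\}$, and since $\underline x_{-1}$ is the only term of $(\underline x_i)$ that can be $\le 0$ (as $\underline x_0>0$), this forces $\underline x_{-1}=\overline x_0=0$, giving $\sigma(-1)=0$ and hence $s=1$.

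The main bookkeeping challenge sits in the second paragraph: the argument excluding $j^*<j$ uses both halves of \eqref{eq:koktengri} in tandem with the sign information \eqref{eq:ofkeyonet}, and is the place where the joint hypothesis genuinely bites. Once the sets are shown equal, the rest is essentially combinatorial and follows from the interlacing recursions.
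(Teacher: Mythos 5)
Your argument is correct and is essentially the paper's proof in different packaging: the exclusion of $j^*<j$ via an intermediate point of the other family is exactly the contradiction the paper uses to obtain its index-paired identities \eqref{eq:sabir}--\eqref{eq:sukunet}, and your ``order-preserving bijection of $\mathbb{Z}$ is a translation'' step merely replaces the paper's two inductions, with the same anchoring dichotomy from \eqref{eq:cip1}--\eqref{eq:cip2} determining $s\in\{0,1\}$. Two cosmetic remarks: the step forcing $\overline y_i=\underline y_{i+s}$ also needs the set equality $\{\overline y_i\}_{i\in\mathbb{Z}}=\{\underline y_k\}_{k\in\mathbb{Z}}$ that you established earlier (uniqueness of the $y$-points in the gaps alone does not suffice), and in the case $\overline x_0(\lambda,\omega)=0$ the accurate statement is that $\underline x_{-1}$ is the only term of $(\underline x_i)_{i\in\mathbb{Z}}$ that can \emph{equal} $0$, since the terms with $i\le-2$ are nonpositive as well but strictly negative.
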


\begin{proof}
	Fix any $\lambda\in(\max\{\beta,M\},m + \beta)$. Recall from Lemma \ref{lem:interpm} that, for every $i\in\mathbb{Z}$, there exists a $j\in\mathbb{Z}$ such that
	\[ \underline x_j(\lambda,\omega) \le \overline x_i(\lambda,\omega) < \underline y_i(\lambda,\omega) \le \overline y_j(\lambda,\omega). \]
	If \eqref{eq:koktengri} holds, then, for $\mathbb{P}$-a.e.\ $\omega$, there exist $k,\ell\in\mathbb{Z}$ such that
	\[ \underline x_j(\lambda,\omega) = \overline x_k(\lambda,\omega) \le \overline x_i(\lambda,\omega) < \underline y_i(\lambda,\omega) = \overline y_\ell(\lambda,\omega) \le \overline y_j(\lambda,\omega). \]
	It follows that $k=i$. (Otherwise, for $\mathbb{P}$-a.e.\ $\omega$, there would exist an $m\in\mathbb{Z}$ such that
	\[ \underline x_j(\lambda,\omega) = \overline x_k(\lambda,\omega) < \underline y_k(\lambda,\omega) = \overline y_m(\lambda,\omega) < \overline x_i(\lambda,\omega) < \overline y_j(\lambda,\omega), \]
	which would be a contradiction.) Moreover, $\ell  = j$ since $\underline x_j(\lambda,\omega) < \overline y_\ell(\lambda,\omega) \le \overline y_j(\lambda,\omega)$.
	Therefore,
	\begin{equation}\label{eq:sabir}
		\underline x_j(\lambda,\omega) = \overline x_i(\lambda,\omega) < \underline y_i(\lambda,\omega) = \overline y_j(\lambda,\omega).
	\end{equation} 
	Similarly, for every $i\in\mathbb{Z}$ and $\mathbb{P}$-a.e.\ $\omega$, there exists a $k\in\mathbb{Z}$ such that
	\begin{equation}\label{eq:sukunet}
		\underline y_k(\lambda,\omega) = \overline y_i(\lambda,\omega) < \underline x_{i+1}(\lambda,\omega) = \overline x_{k+1}(\lambda,\omega).
	\end{equation}
	Performing two separate inductions on the index sets $\{ \ldots,-2,-1,0 \}$ and $\{ 0,1,2,\ldots \}$ (and using \eqref{eq:sabir} and \eqref{eq:sukunet} at each induction step), we easily see that \eqref{eq:nikod} is true, except that it remains to determine the possible values of $s = s(\omega)$ defined there. To this end, recall \eqref{eq:cip1} and \eqref{eq:cip2}. In particular, note that $\overline x_0(\lambda,\omega) \ge 0$. We have the following dichotomy:
	\begin{itemize}
		\item If $\overline x_0(\lambda,\omega) > 0$, then $\underline x_{-s}(\lambda,\omega) = \overline x_0(\lambda,\omega) > 0$ and $s\le 0$. Moreover, $0 < \underline x_0(\lambda,\omega) = \overline x_s(\lambda,\omega)$ and $s\ge0$. We deduce that $s=0$.
		\item If $\overline x_0(\lambda,\omega) = 0$, then $\underline x_{-s}(\lambda,\omega) = \overline x_0(\lambda,\omega) = 0$ and $s=1$.\qedhere
	\end{itemize} 
\end{proof}

\begin{lemma}\label{lem:mex2}
	Assume that $\max\{\beta,M\} < m + \beta$. For any $\lambda\in(\max\{\beta,M\},m + \beta)$, if \eqref{eq:nikod} holds, then $\underline \theta_{1,3}(\lambda) = \overline \theta_{1,3}(\lambda)$.
\end{lemma}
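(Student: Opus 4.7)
The plan is to argue that, under \eqref{eq:nikod}, the two functions $\underline f_{1,3}^\lambda(\cdot,\omega)$ and $\overline f_{1,3}^\lambda(\cdot,\omega)$ are built by integrating the \emph{same} derivative almost everywhere, which forces the two mean slopes in \eqref{eq:hurma1}--\eqref{eq:hurma2} to coincide.

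More precisely, I would fix $\omega$ in the full-measure event in \eqref{eq:nikod} and let $s = s(\omega)\in\{0,1\}$ be the associated shift. The identities $(\underline x_i)_{i\in\mathbb{Z}} = (\overline x_{i+s})_{i\in\mathbb{Z}}$ and $(\overline y_i)_{i\in\mathbb{Z}} = (\underline y_{i+s})_{i\in\mathbb{Z}}$ mean that the partition of $\mathbb{R}$ used in \eqref{eq:bizdik} to define $(\underline f_{1,3}^\lambda)'(\cdot,\omega)$ agrees, up to a reindexing, with the partition used in \eqref{eq:anbizdik} to define $(\overline f_{1,3}^\lambda)'(\cdot,\omega)$. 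Concretely, each open interval $(\underline x_i,\overline y_i)$ coincides with $(\overline x_{i+s},\underline y_{i+s})$, and each open interval $(\overline y_i,\underline x_{i+1})$ coincides with $(\underline y_{i+s},\overline x_{i+s+1})$.

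On the first type of interval both derivatives are given by $G_1^{-1}(\lambda - \beta V(x,\omega))$, and on the second type both are given by $G_3^{-1}(\lambda - \beta V(x,\omega))$. Hence
\[ (\underline f_{1,3}^\lambda)'(x,\omega) = (\overline f_{1,3}^\lambda)'(x,\omega)\ \ \text{for Lebesgue-a.e.}\ x\in\mathbb{R}, \]
since the only points left out are the (countable, hence null) set of endpoints $\{\underline x_i,\overline y_i:i\in\mathbb{Z}\}$. Taking $\mathbb{P}$-expectation of this equality at $x=0$ (where both one-sided derivatives exist after, say, redefining at the null set of endpoints, which does not affect the ergodic-theorem representation) and invoking \eqref{eq:hurma1}--\eqref{eq:hurma2} yields $\underline\theta_{1,3}(\lambda) = \overline\theta_{1,3}(\lambda)$, as required.

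I do not foresee a serious obstacle here: the content of \eqref{eq:nikod} is precisely that the two interlacings collapse to a common one, and once this is observed the remainder is a trivial change-of-indexing argument together with the formulas \eqref{eq:bizdik}--\eqref{eq:anbizdik}. The only minor subtlety is to make sure that the shift $s$, although possibly random, does not affect the a.e.\ identification of the two derivatives — but this is clear because the partition identity holds pointwise in $\omega$ with whatever value of $s$ the realization dictates.
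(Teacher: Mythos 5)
Your argument is correct and is essentially the paper's proof: under \eqref{eq:nikod} the two interval families in \eqref{eq:bizdik} and \eqref{eq:anbizdik} coincide up to the index shift $s$, so $(\underline f_{1,3}^\lambda)'$ and $(\overline f_{1,3}^\lambda)'$ agree off the countable set of endpoints, i.e.\ $\underline f_{1,3}^\lambda(\cdot,\omega)$ and $\overline f_{1,3}^\lambda(\cdot,\omega)$ are identically equal for $\mathbb{P}$-a.e.\ $\omega$, and \eqref{eq:hurma1}--\eqref{eq:hurma2} give $\underline\theta_{1,3}(\lambda)=\overline\theta_{1,3}(\lambda)$. The paper states this in one line; your reindexing details are a faithful expansion of the same idea.
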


\begin{proof}
	For any $\lambda\in(\max\{\beta,M\},m + \beta)$, if \eqref{eq:nikod} holds, then the functions $\underline f_{1,3}^\lambda(\cdot,\omega)$ and $\overline f_{1,3}^\lambda(\cdot,\omega)$ (satisfying \eqref{eq:bizdik} and \eqref{eq:anbizdik}) are identically equal on $\mathbb{R}$ for $\mathbb{P}$-a.e.\ $\omega$. The equality of $\underline \theta_{1,3}(\lambda)$ and $\overline \theta_{1,3}(\lambda)$ now follows from their definitions in \eqref{eq:hurma1} and \eqref{eq:hurma2}.
\end{proof}

Next, we prove the converse of what we have shown in Lemmas \ref{lem:adqat}, \ref{lem:mex1} and \ref{lem:mex2} (all combined).

\begin{lemma}\label{eq:maliki}
	Assume that $\max\{\beta,M\} < m + \beta$. For any $\lambda\in(\max\{\beta,M\},m + \beta)$, if
	\begin{align}
	&\mathbb{P}(\exists\,j\in\mathbb{Z}\ \text{s.t.}\ \underline x_0(\lambda,\omega) = \overline x_j(\lambda,\omega)) < 1\quad\text{or}\label{eq:mketo}\\
	&\mathbb{P}(\exists\,k\in\mathbb{Z}\ \text{s.t.}\ \underline y_0(\lambda,\omega) = \overline y_k(\lambda,\omega)) < 1,\label{eq:cas}
	\end{align}
	equivalently
	\begin{align}
	&\mathbb{P}(\inf\{x\ge \underline x_0(\lambda,\omega):\,\lambda - \beta V(x,\omega) > M \} = \underline x_0(\lambda,\omega)) < 1\quad\text{or}\label{eq:mmemis}\\
	&\mathbb{P}(\inf\{x\ge \underline y_0(\lambda,\omega):\,\lambda - \beta V(x,\omega) < m \} = \underline y_0(\lambda,\omega)) < 1,\label{eq:cha}
	\end{align}
	then $\underline \theta_{1,3}(\lambda) < \overline \theta_{1,3}(\lambda)$.
\end{lemma}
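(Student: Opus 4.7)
The plan is to express $\overline\theta_{1,3}(\lambda) - \underline\theta_{1,3}(\lambda)$ as the expectation of a nonnegative stationary quantity $D(x,\omega) := (\overline f_{1,3}^\lambda)'(x,\omega) - (\underline f_{1,3}^\lambda)'(x,\omega)$, and then exhibit a subinterval of $\mathbb{R}$, of deterministic positive length, on which $D$ is uniformly bounded below by $p_M - p_m > 0$ with positive probability. Using the stationarity of the one-sided derivatives (noted just before \eqref{eq:hurma1}), together with \eqref{eq:hurma1}--\eqref{eq:hurma2} and Fubini, the starting identity is that for any $L_0>0$,
\[
L_0\bigl(\overline\theta_{1,3}(\lambda) - \underline\theta_{1,3}(\lambda)\bigr) \,=\, L_0\,\mathbb{E}[D(0,\omega)] \,=\, \mathbb{E}\!\left[\int_0^{L_0}D(x,\omega)\,dx\right],
\]
and $D\ge0$ a.e.\ by the proof of Lemma \ref{lem:basic}. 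Hence the goal reduces to showing that the last expectation is strictly positive for a suitable $L_0$.

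By symmetry, I assume \eqref{eq:mmemis} (the case \eqref{eq:cha} is analogous with $\underline y_0$ and $(\overline y_k)$ replacing $\underline x_0$ and $(\overline x_j)$). The computations inside the proof of Lemma \ref{lem:adqat} give $\underline x_0(\lambda,\omega)=\hat x_5(\lambda,\omega,0)$ a.s.\ and identify the complement of the event in \eqref{eq:mketo} with $\{\hat x_5<\hat x_6\}$, so $\mathbb{P}(\hat x_5<\hat x_6)>0$. By continuity of $V$ and $\sigma$-additivity, there is a $\delta_0>0$ with $\mathbb{P}(E_{\delta_0})>0$, where
\[
E_{\delta_0} = \{\hat x_6-\hat x_5\ge\delta_0\}\cap\{\overline y_0-\underline x_0\ge\delta_0\};
\]
the second event has probability tending to $1$ as $\delta_0\downarrow0$ since $\lambda-\beta V(\underline x_0,\omega)=M>m$ forces $\overline y_0>\underline x_0$ a.s. On $E_{\delta_0}$ the subinterval $[\underline x_0,\underline x_0+\delta_0]$ lies in $(\underline x_0,\overline y_0)$, so \eqref{eq:bizdik} gives $(\underline f_{1,3}^\lambda)'(x,\omega)=G_1^{-1}(\lambda-\beta V(x,\omega))\le p_m$ there; moreover, since $\lambda-\beta V\le M$ on this subinterval and $\underline x_0\ne\overline x_j$ for every $j$, this subinterval lies in some $[\underline y_k,\overline x_{k+1})$, so \eqref{eq:anbizdik} gives $(\overline f_{1,3}^\lambda)'(x,\omega)=G_3^{-1}(\lambda-\beta V(x,\omega))\ge p_M$. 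Consequently $D(x,\omega)\ge p_M-p_m>0$ for a.e.\ $x\in[\underline x_0,\underline x_0+\delta_0]$ on $E_{\delta_0}$.

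To conclude, I pick $K_0$ so large that $\mathbb{P}(\underline x_0>K_0)<\mathbb{P}(E_{\delta_0})/2$, set $E_{\delta_0}^\ast=E_{\delta_0}\cap\{\underline x_0\le K_0\}$ (so $\mathbb{P}(E_{\delta_0}^\ast)>0$), and take $L_0=K_0+\delta_0$; then on $E_{\delta_0}^\ast$ the subinterval $[\underline x_0,\underline x_0+\delta_0]$ is contained in $[0,L_0]$, which gives
\[
\int_0^{L_0}D(x,\omega)\,dx \,\ge\, (p_M-p_m)\,\delta_0\,\one_{E_{\delta_0}^\ast},
\]
and taking expectations yields $L_0\bigl(\overline\theta_{1,3}(\lambda)-\underline\theta_{1,3}(\lambda)\bigr)\ge(p_M-p_m)\,\delta_0\,\mathbb{P}(E_{\delta_0}^\ast)>0$, as required. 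The main obstacle is the bookkeeping in the previous paragraph: one must verify that on $E_{\delta_0}$ the two correctors genuinely select different branches ($G_1^{-1}$ for $\underline f_{1,3}^\lambda$ versus $G_3^{-1}$ for $\overline f_{1,3}^\lambda$) throughout the whole subinterval, which uses the strict inequalities $m<M$ and $p_m<p_M$ together with the continuity of $V$ and the definitions \eqref{eq:noanchor}--\eqref{eq:cip2}.
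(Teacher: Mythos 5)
Your argument is correct, and it reaches the conclusion by a route that differs from the paper's in how the positive-probability event is converted into the strict inequality $\underline\theta_{1,3}(\lambda)<\overline\theta_{1,3}(\lambda)$. The paper works entirely with the almost-sure spatial averages in \eqref{eq:hurma1}--\eqref{eq:hurma2}: it builds the event $S_{\lambda,K,\delta}$ (your $E_{\delta_0}$ with the extra requirement $\hat x_5-\hat x_3\le K$), applies the Birkhoff ergodic theorem to $z\mapsto\one_{\{\tau_z\omega\in S_{\lambda,K,\delta}\}}$, and then runs a counting/covering estimate to lower-bound the density of indices $i$ for which $(\underline x_i,\underline x_i+\delta)$ carries the derivative gap, yielding $\overline\theta_{1,3}(\lambda)-\underline\theta_{1,3}(\lambda)\ge (p_M-p_m)\delta\eta/K$. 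You instead use the expectation representations $\underline\theta_{1,3}(\lambda)=\mathbb{E}[(\underline f_{1,3}^\lambda)'(0,\omega)]$, $\overline\theta_{1,3}(\lambda)=\mathbb{E}[(\overline f_{1,3}^\lambda)'(0,\omega)]$ already provided by \eqref{eq:hurma1}--\eqref{eq:hurma2}, together with the stationarity relation $(\cdot)_\pm'(x,\omega)=(\cdot)_\pm'(0,\tau_x\omega)$ and Fubini over a fixed window $[0,L_0]$; the $K$-type control then enters only through the harmless external truncation $\{\underline x_0\le K_0\}$, so no counting argument and no second invocation of the ergodic theorem are needed. What your route buys is brevity and a cleaner separation of the probabilistic input (positive probability of a $\delta_0$-gap) from the analytic input (the derivative gap $p_M-p_m$); what the paper's route buys is that the same counting machinery is reused almost verbatim in Lemmas \ref{lem:aynaer}--\ref{lem:vans}, so it keeps the arguments of Sections \ref{sec:newflat} and \ref{sec:interflat} uniform. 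The one place where you are terse is the branch-selection claim on $E_{\delta_0}$: to place $(\underline x_0,\underline x_0+\delta_0)$ inside some $[\underline y_k,\overline x_{k+1})$ you should first rule out $\underline x_0\in(\overline x_i,\underline y_i)$ for every $i$ (this follows, e.g., from Lemma \ref{lem:interpm} together with the disjointness of the intervals $[\underline x_j,\overline y_j)$, since $[\overline x_i,\underline y_i)\subset[\underline x_j,\overline y_j)$ forces $j=0$ and hence $\overline x_i\ge\underline x_0$), and only then does $\lambda-\beta V\le M$ on $[\underline x_0,\underline x_0+\delta_0)$ prevent the subinterval from crossing $\overline x_{k+1}$; this is exactly the bookkeeping the paper performs through its index $j(i)$ and the inclusion $(\underline x_i,\underline x_i+\delta)\subset(\underline x_i,\overline y_i)\cap(\underline y_{j(i)-1},\overline x_{j(i)})$, and once it is written out your proof is complete.
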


\begin{proof}
	Fix any $\lambda\in(\max\{\beta,M\},m + \beta)$. Recall the proof of Lemma \ref{lem:adqat} and note that
	\begin{align*}
		\exists\,j\in\mathbb{Z}\ \text{s.t.}\ \underline x_0(\lambda,\omega) = \overline x_j(\lambda,\omega) &\iff \hat x_5(\lambda,\omega,0) = \hat x_6(\lambda,\omega,0)\\
		&\iff \inf\{x\ge \underline x_0(\lambda,\omega):\,\lambda - \beta V(x,\omega) > M \} = \underline x_0(\lambda,\omega)
	\end{align*}
	for every $\omega\in\Omega_0$. Hence, \eqref{eq:mketo} and \eqref{eq:mmemis} are equivalent. Similarly for \eqref{eq:cas} and \eqref{eq:cha}.
	
	Add the following definition to the list in \eqref{eq:sapkadev}:
	\[ \hat x_7(\lambda,\omega,z) = \inf\{x\ge \hat x_5(\lambda,\omega,z):\,\lambda - \beta V(x,\omega) < m \} > \hat x_5(\lambda,\omega,z). \]	
	If \eqref{eq:mketo} holds, then there exist $K,\delta,\eta > 0$ such that the event
	\[ S_{\lambda,K,\delta} = \{\hat x_5(\lambda,\omega,0) - \hat x_3(\lambda,\omega,0)\le K\ \text{and}\ \min\{\hat x_6(\lambda,\omega,0),\hat x_7(\lambda,\omega,0)\} - \hat x_5(\lambda,\omega,0)\ge\delta \} \]
	satisfies $\mathbb{P}(S_{\lambda,K,\delta}) \ge \eta$. By the ergodic theorem, for $\mathbb{P}$-a.e.\ $\omega$,
	\begin{equation}\label{eq:garse}
		\lim_{L\to +\infty}\frac1{L}\int_0^L\one_{ \{ \tau_z\omega\in S_{\lambda,K,\delta} \} }dz = \mathbb{P}(S_{\lambda,K,\delta}) \ge \eta.
	\end{equation}
	
	For every $z\in\mathbb{R}$, the sets $(B_{\lambda,i,z})_{i\in\mathbb{Z}}$ (defined in \eqref{eq:vjpin}) are disjoint and
	\[ \bigcup_{i\in\mathbb{Z}}B_{\lambda,i,z} = \{\underline{x}_i(\lambda,\omega)\in(-\infty,+\infty)\ \text{for every}\ i\in\mathbb{Z} \} \supset \Omega_0. \]
	Moreover, it follows from \eqref{eq:statgibi} that
	\begin{align*}
		&\quad\ B_{\lambda,i,z}\cap\{\tau_z\omega\in S_{\lambda,K,\delta} \}\\
		&= B_{\lambda,i,z}\cap\{\hat x_5(\lambda,\omega,z) - \hat x_3(\lambda,\omega,z)\le K\ \text{and}\ \min\{\hat x_6(\lambda,\omega,z),\hat x_7(\lambda,\omega,z)\} - \hat x_5(\lambda,\omega,z)\ge\delta \}\\
		&= B_{\lambda,i,z}\cap\{\underline x_i(\lambda,\omega) - \underline x_{i-1}(\lambda,\omega) \le K\ \text{and}\ \min\{\overline x_{j(i)}(\lambda,\omega),\overline y_i(\lambda,\omega)\} - \underline x_i(\lambda,\omega) \ge \delta \},
	\end{align*}
	where $j(i) = j(i,\omega) = \inf\{j\in\mathbb{Z}:\,\overline x_j(\lambda,\omega) \ge \underline x_i(\lambda,\omega) \}$. Let \[ I_{\lambda,K,\delta} = \{ i\in\mathbb{Z}:\,\underline x_i(\lambda,\omega) - \underline x_{i-1}(\lambda,\omega) \le K\ \text{and}\ \min\{\overline x_{j(i)}(\lambda,\omega),\overline y_i(\lambda,\omega)\} - \underline x_i(\lambda,\omega) \ge \delta \}. \]
	With this notation, for every $L > 0$,
	\begin{equation}\label{eq:lorke}
	\begin{aligned}
		\int_0^L\one_{ \{ \tau_z\omega\in S_{\lambda,K,\delta} \} }dz &= \sum_{i\in\mathbb{Z}}\int_0^L\one_{ B_{\lambda,i,z}\cap\{ \tau_z\omega\in S_{\lambda,K,\delta} \} }dz = \sum_{i\in I_{\lambda,K,\delta}}\int_0^L\one_{B_{\lambda,i,z}}dz\\
		&= \sum_{i\in I_{\lambda,K,\delta}}\int_0^L\one_{\{\underline x_{i-1}(\lambda,\omega)\le z < \underline x_i(\lambda,\omega) \}}dz\\
		&\le K\#\{i\in I_{\lambda,K,\delta}:\, 0 \le \underline x_i(\lambda,\omega)\ \ \text{and}\ \ \underline x_{i-1}(\lambda,\omega) \le L \}.
	\end{aligned}
	\end{equation}

	Recall from the proof of Lemma \ref{lem:basic} that $(\underline f_{1,3}^\lambda)'(x,\omega) \le (\overline f_{1,3}^\lambda)'(x,\omega)$ for every $\omega\in\Omega_0$ and a.e.\ $x\in\mathbb{R}$ (with respect to Lebesgue measure). Moreover, for every $i\in I_{\lambda,K,\delta}$, 
	\[ (\underline x_i(\lambda,\omega),\underline x_i(\lambda,\omega) + \delta) \subset (\underline x_i(\lambda,\omega),\overline y_i(\lambda,\omega)) \cap (\underline y_{j(i)-1}(\lambda,\omega),\overline x_{j(i)}(\lambda,\omega)). \]
	Therefore,
	\[ (\underline f_{1,3}^\lambda)'(x,\omega) = G_1^{-1}(\lambda - \beta V(x,\omega)) \le p_m < p_M \le G_3^{-1}(\lambda - \beta V(x,\omega)) = (\overline f_{1,3}^\lambda)'(x,\omega) \]
	whenever $x\in (\underline x_i(\lambda,\omega),\underline x_i(\lambda,\omega) + \delta)$. We conclude that
	\begin{align*}
	\overline\theta_{1,3}(\lambda) - \underline\theta_{1,3}(\lambda) &= \lim_{L\to +\infty}\frac1{L}\left( \int_0^{L+K+\delta} (\overline f_{1,3}^\lambda)'(x,\omega)dx - \int_0^{L+K+\delta}(\underline f_{1,3}^\lambda)'(x,\omega) dx \right)\\
	&\ge \liminf_{L\to +\infty}\frac{(p_M - p_m)\delta}{L} \#\{i\in I_{\lambda,K,\delta}:\, 0 \le \underline x_i(\lambda,\omega)\ \ \text{and}\ \ \underline x_{i-1}(\lambda,\omega) \le L \}\\
	&\ge \lim_{L\to +\infty}\frac{(p_M - p_m)\delta}{KL}\int_0^L\one_{ \{ \tau_z\omega\in S_{\lambda,K,\delta} \} }dz \ge \frac{(p_M - p_m)\delta\eta}{K} > 0
	\end{align*}
	by \eqref{eq:garse} and \eqref{eq:lorke}.
	
	The proof of $\underline \theta_{1,3}(\lambda) < \overline \theta_{1,3}(\lambda)$ under the conditions \eqref{eq:cas} and \eqref{eq:cha} (which are equivalent to each other) is similar.
\end{proof}

\begin{proof}[Proof of Theorem \ref{thm:intermed}]
	It follows from Theorem \ref{thm:gennet} and display \eqref{eq:nadad} at the end of its proof that, for every $\lambda\in(\max\{\beta,M\},m + \beta)$, the graph of the effective Hamiltonian $\Ham$ has a flat piece at height $\lambda$ if and only if
	\begin{equation}\label{eq:kilkuy}
	\underline \theta_{1,3}(\lambda) < \overline \theta_{1,3}(\lambda).
	\end{equation}
	Recall the events $U_\lambda$ and $D_\lambda$ which are defined in \eqref{eq:upcross} and \eqref{eq:downcross}, respectively. Lemmas \ref{lem:adqat}, \ref{lem:mex1}, \ref{lem:mex2} and \ref{eq:maliki} readily imply that $\mathbb{P}(U_\lambda \cap D_\lambda) < 1$ if and only if \eqref{eq:kilkuy} holds. This establishes the desired characterization.
\end{proof}

\section{Examples}\label{sec:examples}

Theorems \ref{thm:velinim}, \ref{thm:ergthmapp} and \ref{thm:intermed} enable us to identify the set
\[ \mathcal{L}(\Ham) = \{ \lambda\in\mathbb{R}:\,\text{the graph of $\Ham$ has a flat piece height $\lambda$} \}. \]
In particular, in the following cases, this set depends only on the parameters $\beta,m,M$.
\begin{itemize}
	\item When $\beta \le m + \beta < M$ (i.e., weak potential), \[ \mathcal{L}(\Ham) = \{\beta\}\cup\{m + \beta, M\}. \]
	\item When $\beta < M = m + \beta$ (i.e., the easy subcase of medium potential), \[ \mathcal{L}(\Ham) = \{\beta, M\}. \]
	\item When $M \le \beta = m + \beta$ (i.e., the easy subcase of strong potential), \[ \mathcal{L}(\Ham) = \{\beta\}. \]
\end{itemize}
In the remaining parameter regime, i.e., when $\max\{\beta,M\} < m + \beta$,
\[ \mathcal{L}(\Ham) \subset \{\beta\}\cup[\max\{\beta,M\},m + \beta] \] and it depends also on the law of $V(\cdot,\omega)$ under $\mathbb{P}$. Subsections \ref{sub:exper} and \ref{sub:exlin} provide more insight into this latter dependence by fully determining $\mathcal{L}(\Ham)$ in two basic classes of examples. 

\subsection{Periodic potentials}\label{sub:exper}

Take any $G:\mathbb{R}\to[0,+\infty)$ that satisfies \eqref{eq:coercive} and Condition \ref{cond:pm}. Assume that $\max\{\beta,M\} < m + \beta$ (i.e., medium or strong potential, excluding their easy subcases). 

Let $(\Omega,\mathcal{F},\mathbb{P})$ be the interval $[0,1)$ equipped with the Lebesgue measure. Denote a generic element of $[0,1)$ by $w$ (instead of the usual $\omega$). For every $x\in\mathbb{R}$, define $\tau_x:[0,1)\to[0,1)$ by
\[ \tau_xw = \kes(x+w) = (x + w) - \lfloor x + w \rfloor. \]
It is well known that $\mathbb{P}$ is stationary \& ergodic under $(\tau_x)_{x\in\mathbb{R}}$.

Given a $1$-periodic, continuous and surjective function $v_0:\mathbb{R} \to [0,1]$ that satisfies $v_0(0) = 1$, introduce $V:\mathbb{R}\times[0,1)\to[0,1]$ by setting
\[ V(x,w) = v_0(x + w) = v_0(\tau_xw).\]
It is clear that $V$ satisfies \eqref{eq:stationary}, \eqref{eq:bucon} and \eqref{eq:evren}. Therefore, Theorems \ref{thm:velinim}, \ref{thm:ergthmapp} and \ref{thm:intermed} are applicable.

In this periodic setting, since \eqref{eq:kimbegen} and \eqref{eq:ezgiha} always hold, we have the following results. 
\begin{itemize}
	\item When $\beta < M < m + \beta$ (i.e., medium potential, excluding its easy subcase), \[ \mathcal{L}(\Ham) \supset \{\beta, M, m + \beta\}. \]
	\item When $M \le \beta < m + \beta$ (i.e., strong potential, excluding its easy subcase), \[ \mathcal{L}(\Ham) \supset \{\beta, m + \beta\}. \]
\end{itemize}
It remains to identify all elements of the set
\[ \mathcal{L}(\Lambda) := \mathcal{L}(\Ham)\cap (\max\{\beta,M\},m + \beta). \]
We carry out this task under additional assumptions on the graph of the function $v_0$ which we impose for the sake of concreteness.

\begin{example}\label{ex:birmumdur}
	Assume that the function $v_0$ is strictly decreasing on $[0,z_1]$ and strictly increasing on $[z_1,1]$ for some $z_1\in(0,1)$. It follows from our surjectivity assumption that $v_0(z_1) = 0$.
	
	For every $\lambda\in(\max\{\beta,M\},m + \beta)$ and $w\in[0,1)$, the function $\lambda - \beta V(\cdot,w):\mathbb{R}\to[\lambda - \beta,\lambda]$ is a surjection and $0 < \lambda - \beta < m < M < \lambda$. It is easy to see that
	\[ 0 < \underline x_0(\lambda,0) = \overline x_0(\lambda,0) < z_1 < \underline y_0(\lambda,0) = \overline y_0(\lambda,0) < 1, \]
	\[ \kes(\underline x_0(\lambda,w)) = \kes(\underline x_0(\lambda,0) - w) \quad\text{and}\quad \kes(\underline y_0(\lambda,w)) = \kes(\underline y_0(\lambda,0) - w). \]
	Therefore, $\lambda - \beta V(\cdot,w)$ is locally invertible at $\underline x_0(\lambda,w)$ and $\underline y_0(\lambda,w)$.
	In particular, $\mathbb{P}(U_\lambda \cap D_\lambda) = 1$. By Theorem \ref{thm:intermed}, there is no flat piece at height $\lambda$. We conclude that
	\[ \mathcal{L}(\Lambda) = \mathcal{L}(\Ham)\cap (\max\{\beta,M\},m + \beta) = \emptyset. \]
\end{example}

\begin{example}
	Assume that the function $v_0$ is strictly decreasing on $[0,z_1]$, strictly increasing on $[z_1,z_2]$, strictly decreasing on $[z_2,z_3]$ and strictly increasing on $[z_3,1]$ for some $z_1,z_2,z_3\in(0,1)$ such that $z_1<z_2<z_3$. It follows from our surjectivity assumption that $\min\{v_0(z_1),v_0(z_3)\} = 0$.
	
	Let $\lambda_1 = M + \beta v_0(z_1)$. If $\lambda_1\in(\max\{\beta,M\},m + \beta)$, then $\underline x_0(\lambda_1,0) = z_1$. Moreover, if $w\in[0,z_1)$, then $\underline x_0(\lambda_1,w) = z_1 - w$. Therefore, $\lambda_1 - \beta V(\cdot,w)$ has a local maximum at $\underline x_0(\lambda_1,w)$. In particular, $\mathbb{P}(U_{\lambda_1}) < 1$. By Theorem \ref{thm:intermed}, there is a flat piece at height $\lambda_1$.
	
	Let $\lambda_2 = m + \beta v_0(z_2)$. If $\lambda_2\in(\max\{\beta,M\},m + \beta)$, then we consider three subcases.
	\begin{itemize}
		\item If $\lambda_2 = \lambda_1$, then it is already covered above. 
		\item If $\lambda_2 > \lambda_1$, then
		\[ 0 < \underline x_0(\lambda_2,0) = \overline x_0(\lambda_2,0) < z_1 < \underline y_0(\lambda_2,0) = z_2. \]
		Moreover, if $w\in[0,\overline x_0(\lambda_2,0)]$, then $\underline y_0(\lambda_2,w) = z_2 - w$. Therefore, $\lambda_2 - \beta V(\cdot,w)$ has a local minimum at $\underline y_0(\lambda_2,w)$. In particular, $\mathbb{P}(D_{\lambda_2}) < 1$. By Theorem \ref{thm:intermed}, there is a flat piece at height $\lambda_2$.
		\item If $\lambda_2 < \lambda_1$, then
		\[ z_2 < \underline x_0(\lambda_2,0) = \overline x_0(\lambda_2,0) < z_3 < \underline y_0(\lambda_2,0) = \overline y_0(\lambda_2,0) < 1.\]
		Moreover, for every $w\in[0,1)$,
	    \[ \kes(\underline x_0(\lambda_2,w)) = \kes(\underline x_0(\lambda_2,0) - w) \quad\text{and}\quad \kes(\underline y_0(\lambda_2,w)) = \kes(\underline y_0(\lambda_2,0) - w). \]
	    Therefore, $\lambda_2 - \beta V(\cdot,w)$ is locally invertible at $\underline x_0(\lambda_2,w)$ and $\underline y_0(\lambda_2,w)$.
	    In particular, $\mathbb{P}(U_{\lambda_2} \cap D_{\lambda_2}) = 1$. By Theorem \ref{thm:intermed}, there is no flat piece at height $\lambda_2$.
	\end{itemize}
	
	Let $\lambda_3 = M + \beta v_0(z_3)$. If $\lambda_3\in(\max\{\beta,M\},m + \beta)$, then $v_0(z_3) > v_0(z_1) = 0$ and $\lambda_3 > \lambda_1$. We consider three subcases.
	\begin{itemize}
		\item If $\lambda_3 = \lambda_2$, then it is already covered above.
		\item If $\lambda_3 < \lambda_2$, then
		\[ 0 < \underline x_0(\lambda_3,0) = \overline x_0(\lambda_3,0) < z_1 < \underline y_0(\lambda_3,0) = \overline y_0(\lambda_3,0) < z_2 < \underline x_1(\lambda_3,0) = z_3.\]
		Moreover, if $w\in[\underline x_0(\lambda_3,0),z_3)$, then $\underline x_0(\lambda_3,w) = \underline x_1(\lambda_3,0) - w = z_3 - w$. Therefore, $\lambda_3 - \beta V(\cdot,w)$ has a local maximum at $\underline x_0(\lambda_3,w)$. In particular, $\mathbb{P}(U_{\lambda_3}) < 1$. By Theorem \ref{thm:intermed}, there is a flat piece at height $\lambda_3$.
		\item If $\lambda_3 > \lambda_2$, then 
		\[ 0 < \underline x_0(\lambda_3,0) = \overline x_0(\lambda_3,0) < z_1 < z_3 < \underline y_0(\lambda_3,0) = \overline y_0(\lambda_3,0) < 1. \]
		Moreover, for every $w\in[0,1)$,
		\[ \kes(\underline x_0(\lambda_3,w)) = \kes(\underline x_0(\lambda_3,0) - w) \quad\text{and}\quad \kes(\underline y_0(\lambda_3,w)) = \kes(\underline y_0(\lambda_3,0) - w). \]
		Therefore, $\lambda_3 - \beta V(\cdot,w)$ is locally invertible at $\underline x_0(\lambda_3,w)$ and $\underline y_0(\lambda_3,w)$.
		In particular, $\mathbb{P}(U_{\lambda_3} \cap D_{\lambda_3}) = 1$. By Theorem \ref{thm:intermed}, there is no flat piece at height $\lambda_3$.
	\end{itemize}
	
	For every $\lambda\in(\max\{\beta,M\},m + \beta)$, if $\lambda - \beta V(\cdot,w)$ has a local maximum at $\underline x_i(\lambda,w)$ for some $i\in\mathbb{Z}$ and $w\in[0,1)$, then
	\[ \kes(\underline x_i(\lambda,w) + w) \in \{z_1,z_3\} \]
	and $\lambda = M + \beta V(\underline x_i(\lambda,w),w) = M + \beta v_0(\underline x_i(\lambda,w) + w)\in\{\lambda_1,\lambda_3\}$. Similarly, if $\lambda - \beta V(\cdot,w)$ has a local minimum at $\underline y_i(\lambda,w)$ for some $i\in\mathbb{Z}$ and $w\in[0,1)$, then $\lambda = m + \beta v_0(z_2) = \lambda_2$. In particular, if $\lambda\notin\{\lambda_1,\lambda_2,\lambda_3\}$, then $\mathbb{P}(U_\lambda \cap D_\lambda) = 1$. Therefore, by Theorem \ref{thm:intermed}, there is no flat piece at any height $\lambda\in(\max\{\beta,M\},m + \beta)\setminus\{\lambda_1,\lambda_2,\lambda_3\}$.
	
	Putting everything together, 
	we conclude that the set $\mathcal{L}(\Lambda) = \mathcal{L}(\Ham)\cap (\max\{\beta,M\},m + \beta)$ is given by
	\[ \mathcal{L}(\Lambda) = \begin{cases} \{\lambda_1\}&\text{if}\ \lambda_3 = M \le \max\{ \beta,M \} < \max\{\lambda_1,\lambda_2\} = \lambda_1 < m + \beta\\
											   &\text{or}\ \lambda_3 = M  \le \max\{ \beta,M \} < \lambda_1 < m + \beta = \lambda_2,\\
								  \{\lambda_2\}&\text{if}\ \lambda_1 = M \le \max\{ \beta,M \} < \lambda_2 = \min\{\lambda_2,\lambda_3\} < m + \beta\\
											   &\text{or}\ \max\{\lambda_1,\lambda_3\} \le \max\{ \beta,M \} < \lambda_2 < m + \beta,\\
								  \{\lambda_3\}&\text{if}\ \lambda_1 = M \le \max\{ \beta,M \} < \lambda_3 < m + \beta = \lambda_2,\\
						\{\lambda_1,\lambda_2\}&\text{if}\ \lambda_3 = M \le \max\{ \beta,M \} < \lambda_1 < \lambda_2 < m + \beta,\\
						\{\lambda_2,\lambda_3\}&\text{if}\ \lambda_1 = M \le \max\{ \beta,M \} < \lambda_3 < \lambda_2 < m + \beta,\\
								  \ \ \emptyset    &\text{otherwise,}
												 \end{cases} \]
	where $\lambda_1 = M + \beta v_0(z_1)$, $\lambda_2 = m + \beta v_0(z_2)$ and $\lambda_3 = M + \beta v_0(z_3)$.
\end{example}

\subsection{Piecewise linear random potentials}\label{sub:exlin}

Take any $G:\mathbb{R}\to[0,+\infty)$ that satisfies \eqref{eq:coercive} and Condition \ref{cond:pm}.
Assume that $\max\{\beta,M\} < m + \beta$ (i.e., medium or strong potential, excluding their easy subcases).

Let $(\Omega,\mathcal{F},\mathbb{P})$ be the product space $[0,1)\times[0,1]^{\mathbb{Z}}$ equipped with $\mathbb{P} = \ell\times\mathbb{Q}$, where $\ell$ is the Lebesgue measure on $[0,1)$ and $\mathbb{Q}$ is the law of a discrete-time stationary \& ergodic stochastic process taking values in $[0,1]$. Assume that the one-dimensional marginal distribution of $\mathbb{Q}$ is equal to some Borel probability measure $\mu$ on $[0,1]$ such that
\begin{equation}\label{eq:supp0}
	\{0,1\} \subset \text{supp}(\mu).
\end{equation}
Denote a generic element of $\Omega$ by $\omega = (w,(v_i)_{i\in\mathbb{Z}})$. For every $x\in\mathbb{R}$, define $\tau_x:\Omega\to\Omega$ by
\[ \tau_x(w,(v_i)_{i\in\mathbb{Z}}) = (\kes(x + w),(v_{\lfloor x + w \rfloor + i})_{i\in\mathbb{Z}}). \]
We leave it to the reader to check that $\mathbb{P}$ is stationary \& ergodic under $(\tau_x)_{x\in\mathbb{R}}$.

Introduce $V:\mathbb{R}\times\Omega\to[0,1]$ by setting
\[ V(x,\omega) = (1 - \kes(x + w))v_{\lfloor x + w \rfloor} + \kes(x + w)v_{\lfloor x + w \rfloor + 1}. \]
This is a linear interpolation. In particular,
\[ V(0,\omega) = (1 - w)v_0 + wv_1. \]
It is easy to see that $V$ satisfies \eqref{eq:stationary}, \eqref{eq:bucon} and \eqref{eq:evren}. Therefore, Theorems \ref{thm:velinim}, \ref{thm:ergthmapp} and \ref{thm:intermed} are applicable.

Let $\mathcal{A}(\mu)$ denote the set of atoms of $\mu$. Note that \eqref{eq:kimbegen} (resp.\ \eqref{eq:ezgiha}) holds if and only if $\mu$ has an atom at $0$ (resp.\ $1$).
Therefore, in this piecewise linear setting,  we have the following results. 
\begin{itemize}
	\item When $\beta < M < m + \beta$ (i.e., medium potential, excluding its easy subcase), $\beta\in\mathcal{L}(\Ham)$. Moreover,
				\begin{equation}\label{eq:bunref1}
					M\in\mathcal{L}(\Ham)\ \iff\ 0\in\mathcal{A}(\mu) \qquad\text{and}\qquad m + \beta \in\mathcal{L}(\Ham)\ \iff\ 1\in\mathcal{A}(\mu).
				\end{equation} 
	\item When $M \le \beta < m + \beta$ (i.e., strong potential, excluding its easy subcase), $\beta\in\mathcal{L}(\Ham)$. Moreover,
				\begin{equation}\label{eq:bunref2}
					m + \beta \in\mathcal{L}(\Ham)\ \iff\ 1\in\mathcal{A}(\mu).
				\end{equation}
\end{itemize}
It remains to identify all elements of the set
\[ \mathcal{L}(\Lambda) := \mathcal{L}(\Ham)\cap (\max\{\beta,M\},m + \beta). \]
We carry out this task under additional structural assumptions on the probability measure $\mathbb{Q}$.

\begin{example}\label{ex:iid}
	Assume that $\mathbb{Q} = \mu^{\mathbb{Z}}$, i.e., it is the law of a bi-infinite sequence of independent and identically distributed (i.i.d.) random variables with common distribution $\mu$.
	(See Remark \ref{rem:genel} for a generalization.)
	
	For every atom $a\in\mathcal{A}(\mu)$, if $\lambda_1(a) = M + \beta a \in (\max\{\beta,M\},m + \beta)$, then the event
	\begin{equation}\label{eq:olaybir}
	\begin{aligned}
		& \left\{ \omega = (w,(v_i)_{i\in\mathbb{Z}})\in\Omega:\, \lambda_1(a) - \beta v_0 < m,\ \lambda_1(a) - \beta v_1 = M,\ \lambda_1(a) - \beta v_2 < m \right\}\\
		=& \left\{ \omega = (w,(v_i)_{i\in\mathbb{Z}})\in\Omega:\, \lambda_1(a) - \beta V(-w,\omega) < m,\ \lambda_1(a) - \beta V(1-w,\omega) = M,\right.\\
		&\hspace{76mm}\left.\lambda_1(a) - \beta V(2-w,\omega) < m \right\}
	\end{aligned}
	\end{equation}
	has positive $\mathbb{P}$-probability by \eqref{eq:supp0}. On this event,
	\[ \overline y_{-1}(\lambda_1(a),\omega) < -w \le 0 < \underline x_0(\lambda_1(a),\omega) = 1 - w < \overline y_0(\lambda_1(a),\omega) < 2 - w \]
	and $\lambda_1(a) - \beta V(\cdot,\omega)$ has a local maximum at $\underline x_0(\lambda_1(a),\omega)$. We deduce that $\mathbb{P}(U_{\lambda_1(a)}) < 1$. Therefore, by Theorem \ref{thm:intermed}, there is a flat piece at height $\lambda_1(a)$.
	
	For every atom $a\in\mathcal{A}(\mu)$, if $\lambda_2(a) = m + \beta a \in (\max\{\beta,M\},m + \beta)$, then the event
	\begin{equation}\label{eq:olayiki}
	\begin{aligned}
		& \left\{ \omega = (w,(v_i)_{i\in\mathbb{Z}})\in\Omega:\, \lambda_2(a) - \beta v_0 > M,\ \lambda_2(a) - \beta v_1 = m,\ \lambda_2(a) - \beta v_2 > M \right\}\\
		=& \left\{ \omega = (w,(v_i)_{i\in\mathbb{Z}})\in\Omega:\, \lambda_2(a) - \beta V(-w,\omega) > M,\ \lambda_2(a) - \beta V(1-w,\omega) = m,\right.\\
		&\hspace{76mm}\left.\lambda_2(a) - \beta V(2-w,\omega) > M \right\}
	\end{aligned}
	\end{equation}
	has positive $\mathbb{P}$-probability by \eqref{eq:supp0}. On this event,
	\[ \overline x_{-1}(\lambda_2(a),\omega) < -w \le 0 < \underline y_{-1}(\lambda_2(a),\omega) = 1 - w < \overline x_0(\lambda_2(a),\omega) < 2 - w \]
	and $\lambda_2(a) - \beta V(\cdot,\omega)$ has a local minimum at $\underline y_{-1}(\lambda_2(a),\omega)$. We recall Lemma \ref{lem:adqat} and deduce that $\mathbb{P}(D_{\lambda_2(a)}) < 1$. Therefore, by Theorem \ref{thm:intermed}, there is a flat piece at height $\lambda_2(a)$.
	
	For every $\lambda \in (\max\{\beta,M\},m + \beta)\setminus\{\lambda_1(a),\lambda_2(a):\,a\in\mathcal{A}(\mu)\}$ and $\mathbb{P}$-a.e.\ $\omega$, the piecewise linear function $\lambda - \beta V(\cdot,\omega)$ is locally invertible at $\underline x_0(\lambda,\omega)$ and $\underline y_0(\lambda,\omega)$. In particular, $\mathbb{P}(U_\lambda \cap D_\lambda) = 1$. By Theorem \ref{thm:intermed}, there is no flat piece at height $\lambda$.
	
	Putting everything together, we conclude that the set $\mathcal{L}(\Lambda) = \mathcal{L}(\Ham)\cap (\max\{\beta,M\},m + \beta)$ is given by
    \begin{equation}\label{eq:iidfor}
   \begin{aligned}
   \mathcal{L}(\Lambda) &= \left(\{\lambda_1(a):\,a\in\mathcal{A}(\mu)\} \cup \{\lambda_2(a):\,a\in\mathcal{A}(\mu)\}\right) \cap (\max\{\beta,M\},m + \beta)\\
   &=\left( (M + \beta\mathcal{A}(\mu)) \cup (m + \beta\mathcal{A}(\mu)) \right) \cap (\max\{\beta,M\},m + \beta).
   \end{aligned}    
    \end{equation} 
\end{example}

\begin{example}\label{ex:Markov}
	Let $\mu_1$ and $\mu_2$ be two Borel probability measures on $[0,1]$ such that
	\begin{equation}\label{eq:supp12}
		0\in \text{supp}(\mu_1) \subset [0,c) \quad\text{and}\quad 1\in \text{supp}(\mu_2) \subset (c,1]
	\end{equation}
	for some $c\in(0,1)$. Assume that $\mathbb{Q}$ is the law of the discrete-time stationary Markov process whose transition kernel $\pi$ and invariant probability distribution $\mu$ are given by
	\[ \pi(v,\cdot) = \begin{cases} \mu_2(\cdot) &\text{if}\ v\in\text{supp}(\mu_1),\\ \mu_1(\cdot) &\text{if}\ v\in\text{supp}(\mu_2), \end{cases} \]
	and $\mu = \frac1{2}(\mu_1 + \mu_2)$. In words, $\mathbb{Q}$ is the law of two independent and interlaced bi-infinite sequences of i.i.d.\ random variables with common distributions $\mu_1$ and $\mu_2$, and which of these two distributions is used at the index $i=0$ is determined by a fair coin toss. (See Remark \ref{rem:genel} for a generalization.)
	
	For every atom $a\in\mathcal{A}(\mu_1)$, if $\lambda_1(a) = M + \beta a \in (\max\{\beta,M\},m + \beta)$, then the event in \eqref{eq:olaybir} has positive $\mathbb{P}$-probability by \eqref{eq:supp12}. Therefore, by the corresponding argument in Example \ref{ex:iid}, there is a flat piece at height $\lambda_1(a)$. 
		
	For every atom $a\in\mathcal{A}(\mu_2)$, if $\lambda_2(a) = m + \beta a \in (\max\{\beta,M\},m + \beta)$, then the event in \eqref{eq:olayiki} has positive $\mathbb{P}$-probability by \eqref{eq:supp12}. Therefore, by the corresponding argument in Example \ref{ex:iid}, there is a flat piece at height $\lambda_2(a)$.	
	
	For every $\lambda \in (\max\{\beta,M\},m + \beta)$ and $\omega = (w,(v_i)_{i\in\mathbb{Z}})\in\Omega$, it follows from \eqref{eq:supp12} that the set of local maxima of the piecewise linear function $\lambda - \beta V(\cdot,\omega)$ is equal to $\{ 2i - w:\,i\in\mathbb{Z} \}$ if $v_0 \in\text{supp}(\mu_1)$ and $\{ 2i + 1 - w:\,i\in\mathbb{Z} \}$ if $v_0 \in\text{supp}(\mu_2)$. Therefore, if the event
	\[ (U_\lambda)^c = \{ \omega\in\Omega:\,\text{$\lambda - \beta V(\cdot,\omega)$ has a local maximum at $\underline x_0(\lambda,\omega)$} \} \]
	has positive $\mathbb{P}$-probability, then $\lambda = \lambda_1(a)$ for some $a\in\mathcal{A}(\mu_1)$. Similarly, if $\mathbb{P}((D_\lambda)^c) > 0$, then $\lambda = \lambda_2(a)$ for some $a\in\mathcal{A}(\mu_2)$.
	
	Putting everything together, we conclude that the set $\mathcal{L}(\Lambda) = \mathcal{L}(\Ham)\cap (\max\{\beta,M\},m + \beta)$ is given by
	\begin{equation}\label{eq:Markovfor}
	\begin{aligned}
		\mathcal{L}(\Lambda) &= \left(\{\lambda_1(a):\,a\in\mathcal{A}(\mu_1)\} \cup \{\lambda_2(a):\,a\in\mathcal{A}(\mu_2)\}\right) \cap (\max\{\beta,M\},m + \beta)\\
		&=\left( (M + \beta\mathcal{A}(\mu_1)) \cup (m + \beta\mathcal{A}(\mu_2)) \right) \cap (\max\{\beta,M\},m + \beta).
	\end{aligned}
	\end{equation}
\end{example}

\begin{remark}\label{rem:desir}
	In Example \ref{ex:Markov}, it is clear from the description in \eqref{eq:Markovfor} that, for any finite or countable $S \subset (\max\{\beta,M\},m + \beta)$, there exist a $c\in(0,1)$ and measures $\mu_1,\mu_2$ satisfying \eqref{eq:supp12} such that $\mathcal{L}(\Lambda) = S$. (Note that there is not enough degree of freedom in the description in \eqref{eq:iidfor} to realize this in the setting of Example \ref{ex:iid}.) We recall the characterizations in \eqref{eq:bunref1}--\eqref{eq:bunref2} and conclude that we can make $\mathcal{L}(\Ham)$ equal to any desired finite or countable subset of $\{\beta\}\cup[\max\{\beta,M\},m + \beta]$ containing $\beta$.
\end{remark}

\begin{remark}\label{rem:genel}
	In Example \ref{ex:iid} (resp.\ Example \ref{ex:Markov}), it is clear from the arguments we gave that the description of the set $\mathcal{L}(\Lambda)$ in \eqref{eq:iidfor} (resp.\ \eqref{eq:Markovfor}) would not change if we replace $\mathbb{Q}$ with the law $\mathbb{Q}'$ of any discrete-time stationary \& ergodic stochastic process taking values in $[0,1]$ such that the finite-dimensional marginals of $\mathbb{Q}$ and $\mathbb{Q}'$ are mutually absolute continuous.
\end{remark}

\section*{Acknowledgments}

This work was conceived and shaped during an extensive and helpful electronic correspondence with A.\ Davini and E.\ Kosygina.
The author thanks them for generously sharing their intuition, answering many questions, making suggestions 
and giving valuable feedback on a preliminary version of the manuscript.

\bibliographystyle{abbrv}
\bibliography{1d_2well_inviscid_ref}

\end{document}